\long\def\skipit#1{} 
\newcommand{\mdef}[1]{\textit{\textbf{#1}}}
\newcommand{\noi}{\noindent}
\newcommand{\vsa}{\vskip-12pt}
\newcommand{\vsb}{\vskip-6pt}
\newcommand{\vsc}{\vskip-3pt}
\newcommand{\hsb}{\hskip-6pt}
\newcommand{\hsc}{\hskip-3pt}
\newcommand{\rmand}{\quad\hbox{ and }\quad} 
\def\fa#1{\qquad\text{for all $#1$}} 
\newcommand{\tLC}{\mathcal{L}^\sim}  
\newcommand{\gtLC}{\mathcal{L}^\gs}  
\newcommand{\ltLC}{\mathcal{L}^\ls}  
\newcommand{\A}{\mathcal{A}}
\newcommand{\B}{\mathcal{B}}
\newcommand{\W}{\mathcal{W}}
\newcommand{\ls}{\lesssim}
\newcommand{\gs}{\gtrsim}
\newcounter{hours}
\newcounter{minutes}
\newcommand{\printtime}{
	\setcounter{hours}{\time/60}%
	\setcounter{minutes}{\time-\value{hours}*60}
	\ifthenelse{\value{hours}<10}{0}{}\thehours:%
	\ifthenelse{\value{minutes}<10}{0}{}\theminutes}
\numberwithin{equation}{section}
\numberwithin{figure}{section}
\numberwithin{table}{section}
\newtheorem{thm}{Theorem}[section]
\newtheorem{cor}[thm]{Corollary}
\newtheorem{lemma}[thm]{Lemma}
\newtheorem{prop}[thm]{Proposition}
\newtheorem{conj}[thm]{Conjecture}
\newtheorem{J-com}{JG-comment}[section]
\theoremstyle{definition}
\newtheorem{example}{Example}[section]
\def\DWc#1{{\textcolor{cyan}{#1}}}
\def\DWm#1{{\textcolor{magenta}{#1}}}
\begin{document}

\title[Log-Concavity of Combinations of Sequences ]{{Log-Concavity of Combinations of Sequences and Applications to Genus Distributions}}

\author[J.L. Gross, T. Mansour, T.W. Tucker, and D.G.L. Wang]{Jonathan L. Gross
}
\address{
Department of Computer Science  \\
Columbia University, New York, NY 10027, USA; \quad
email: gross@cs.columbia.edu
}
\author[]{Toufik Mansour
}
\address{
Department of Mathematics  \\
University of Haifa, 31905 Haifa, Israel;  \quad
email: tmansour@univ.haifa.ac.il}
\author[]{Thomas W. Tucker
}
\address{
Department of Mathematics  \\
Colgate University, Hamilton, NY 13346, USA; \quad
email: ttucker@colgate.edu
}
\author[]{David G.L. Wang
}
\address{
Department of Mathematics  \\
University of Haifa, 31905 Haifa, Israel;  \quad
email: david.combin@gmail.com; wgl@math.haifa.ac.il}

\date{}

\begin{abstract}  
We formulate conditions on a set of log-concave sequences, under which any linear combination of those sequences is log-concave, and further, of conditions under which linear combinations of log-concave sequences that have been transformed by convolution are log-concave. These conditions involve relations on sequences called  \textit{synchronicity} and \textit{ratio-dominance}, and a characterization of some bivariate sequences as \textit{lexicographic}.   We are motivated by the 25-year old conjecture that the genus distribution of every graph is log-concave.  Although calculating genus distributions is NP-hard, they have been calculated explicitly for many graphs of tractable size, and the three conditions have been observed to occur in the \textit{partitioned genus distributions} of all such graphs.  They are used here to prove the log-concavity of the genus distributions of graphs constructed by iterative amalgamation of double-rooted graph fragments whose genus distributions adhere to these conditions, even though it is known that the genus polynomials of some such graphs have imaginary roots.  A blend of topological and combinatorial arguments demonstrates that log-concavity is preserved through the iterations.   \vskip-24pt
\end{abstract} \vskip-24pt
\subjclass[2000] {05A15, 05A20, 05C10}

\maketitle     \vskip-24pt               
\tableofcontents

\skipit{ 
\begin{flushleft}   \vskip-24pt
\textbf{Version:\quad\printtime\quad\today}
\end{flushleft}
} 

\enlargethispage{36pt}
\eject


\section{\large Introduction}\label{sec:intro}
\enlargethispage{-12pt}

The aim of this paper is two-fold.
We are motivated by a long-standing conjecture \cite{GrRoTu89}
that the genus distribution of every graph is log-concave.
Our initial objective was to confirm this conjecture for several families of graphs.
We transform this predominantly topological conjecture
here into log-concavity problems for the sum of convolutions
of some sequences called \textit{partial genus distributions}.
A simultaneous objective is to contribute new methods
to the theory of log-concavity.
Our topological graph-theoretic problem leads us to
consider special binary relations between
log-concave sequences, and also some special bivariate functions
that may appear  in collections of log-concave sequences.
We think that our newly developed concepts of
\textit{synchronicity}, \textit{ratio-dominance}, and \textit{lexicographicity}
are interesting in their own rights in log-concavity theory.
We develop several properties involving convolutions and
these notions, including some criteria for the log-concavity
of sums of sequences and for the log-concavity of sums of convolutions.

\subsection{{Historical background}}
Topological graph theory dates back to Heawood (1890) \cite{Hea1890}
and Heffter (1891) \cite{Hef1891}, who transformed the four-color
map problem for the plane, into the Heawood problem,
which specifies the maximum number of  map colors needed
for the surfaces of higher genus and for non-orientable surfaces as well.
The solution of this problem by Ringel and Youngs (1968) (see \cite{Ri74})
led to development of the study of embeddings of various families
of graphs in higher genus surfaces and to simultaneous study
of all the possible maps on a given surface.  There have subsequently
evolved substantial enumerative branches of graph embeddings,
according to genus, and maps, according to numbers of edges,
with frequent interplay between topological graph theory and
combinatorics, as in the present paper.

For instance, a combinatorial formula of Jackson~\cite{Jac87} (1987),
based on group character theory, is critical to the calculation  of
the genus  distributions of bouquets \cite{GrRoTu89} (bouquets
are graphs with one vertex and any number of self-loops).
Recent calculations of genus distributions of star-ladders \cite{ChGrMa12a}
and of embedding distributions of circular ladders \cite{ChGrMa12b}
use Chebyshev polynomials and the overlap matrix of Mohar \cite{Mo89}.
The log-concavity problem for genus distributions, our general target
here, is presently one of the oldest unsolved oft-cited problems in
topological graph theory.

There have always been two approaches to graph embeddings:
fix the graph and vary the surface, or fix the surface and vary the graph.
The Heawood Map Theorem can be viewed either way: find the largest
complete graph embeddable in the surface of \hbox{genus $g$},
or find the lowest genus surface in which a given complete graph
can be embedded.  Genus distribution, along with minimum and
maximum genus, are examples of fix-the-graph. Robertson and
Seymour's \cite{RS90} Kuratowski theorem for general surfaces,
which solves a problem of Erd\"os and K\"onig~\cite{Ko36}, is an
example of fix-the-surface.  Another example is the theory of maps,
where the graph embedding itself is fixed and its automorphisms are
investigated.  For example, until very recently, nothing was known
about the regular maps (maps having full rotational symmetry) in a
surface of given genus $g>5$;  a full classification of such maps
when $g=p+1$ for $p$ prime is given in \cite{CST}.  Methods there
are entirely algebraic, even involving parts of the classification of finite simple groups.

The study of graphs in surfaces has an uncanny history, beginning
with the Euler characteristic, of informing combinatorics, topology,
and algebra.  Kuratowski's Theorem leads  to the proof by
Robertson and Seymour~\cite{RS2} of the purely graph-theoretic
Wagner's Conjecture. Symmetries of maps leads, via Belyi's Theorem,
to Grothendieck's {\em dessins d'enfants} program~\cite{Gro} to study the
absolute Galois group of the rationals by its action on maps \cite{Gro,JoSi}.
It would not be surprising if genus distribution similarly informs enumerative combinatorics.

\subsection{Log-concave sequences}  

Unimodal and log-concave sequences occur naturally in combinatorics,
algebra, analysis, geometry, computer science, probability and statistics.
We refer the reader to the survey papers of Stanley~\cite{Stan89} and Brenti~\cite{Br94}
for various results on unimodality and log-concavity.

The log-concavity of particular families of sequences and conditions that imply log-concavity have often been  studied before.  One such family is {\em P\'olya frequency sequences}.  By the Aissen-Schoenberg-Whitney theorem~\cite{ASW52}, the sequence $(a_k)_{k=0}^n$ is a P\'olya frequency sequence if and only if the polynomial $\sum_{k=0}^na_kx^k$ is real-rooted.  On the other hand, by a theorem of Newton (see, e.g., \cite{Br89}), the sequence of  coefficients of a real-rooted polynomial is log-concave. This provides an approach for proving the log-concavity of a sequence.

In fact, polynomials arising from combinatorics are often real-rooted; see~\cite{Kar68B,Pit97,Sch55,LW07,Stan00}.
It is also not uncommon to find classes of polynomials with some members real-rooted (and, thus, log-concave), yet with other members log-concave, despite imaginary roots.  For example, Wang and Zhao~\cite{WZ13} showed that all coordinator polynomials of Weyl group lattices are log-concave, while those of type $B_n$ are not real-rooted.  In this paper, we confirm the log-concavity of another well-studied sequence from topological graph theory, which was proved to be non-real-rooted, by using one of our criteria for log-concavity.

Recently some probabilists and statisticians care about the negative dependence of
random variables. According to Efron~\cite{Efr65} and Joag-Dev and Proschan~\cite{JoPr83},
if the independent random variables  $X_1$, $X_2$,  $\ldots$, $X_n$
have log-concave distributions, then their sum $X_1+X_2+\cdots+X_n$
is stochastically increasing, which in turn results in the negative association of the
distribution of $(X_1,X_2,\ldots,X_n)$ conditional on $\sum_{i\in A}X_i$,
for any nonempty subset $A$ of the set $\{1,2,\ldots,n\}$.

A recent paper of Huh \cite{Huh12} illustrates analogous interplay
between log-concavity and purely chromatic graph theory.
Huh proves the unimodality of the chromatic polynomial of any graph,
and thereby affirms a conjecture of Read \cite{Read68} and partially
affirms its generalization by Rota~\cite{Rota71}, Heron~\cite{Her72}
and Welsh~\cite{Wel76} into the context of matroids.

Other topics related to log-concavity include {\em $q$-log-concavity} (introduced by Stanley; see, e.g., \cite{But87,Kra89});  {\em strong $q$-log-concavity} (introduced by Sagan~\cite{Sag92});  {\em ultra log-concavity} (introduced by Pemantle~\cite{Pem00}; see also\cite{Lig97,WY07});  {\em $k$-log-concavity} and {\em $\infty$-log-concavity} (see~\cite{KP07,MS10});  {\em $q$-weighted log-concavity} (see~\cite{WZ08});  {\em ratio monotonicity} (see~\cite{CX09});  {\em reverse log-concavity} (see~\cite{CG09}); and so on.  Log-convex sequences have also received attention (e.g., \cite{CWY10,CTWY10}).
Some combinatorial proofs for them have emerged in turn (e.g., \cite{Sag88,CPQ12}).
Log-concavity of the convolution of sequences has been studied in~\cite[Section~6]{BBL09} implicitly.

In this paper, we introduce three new concepts regarding nonnegative
log-concave sequences.
A principal intent is to develop a tool to deal with a sum of convolutions
of log-concave sequences.
First, we introduce the binary relation \mdef{synchronicity}
of two log-concave sequences,
which is symmetric but not transitive;
it characterizes pairs of sequences that have synchronously non-increasing
ratios of successive elements.  As will be seen, synchronized sequences
form a monoid, under the usual addition operation.
Second, we introduce the binary relation \mdef{ratio-dominance}
between two synchronized log-concave sequences,
involving comparison of the ratios of successive terms of the same index.
We give some criteria for the ratio-dominance relation between two convolutions.
In particular, both synchronicity and ratio-dominance are
preserved by the convolution transformation associated with any
log-concave sequence.  Third, we examine collections of log-concave
sequences that admit a certain \mdef{lexicographic} condition,
which will be used to deal with the ratio-dominance relation between
two sums of convolutions of log-concave sequences.

\subsection{Genus distribution of a graph}\label{sec:GD}

The \mdef{graph embeddings} we discuss are cellular and orientable.  Graphs are implicitly taken to be connected.  For general background in topological graph theory, see \cite{GrTu87, BW09B}.  Some prior acquaintance with partitioned genus distributions (e.g., \cite{GKP10, PoKhGr10}) would likely be helpful to a reader of this paper.

The \mdef{genus distribution} of a graph $G$ is the sequence $g_0(G)$, $g_1(G)$, $g_2(G)$, $\ldots$,
where $g_i(G)$ is the number of combinatorially distinct embeddings of~$G$
in the orientable surface of genus~$i$.  It follows from the \textit{interpolation theorem}
(see \cite{Duke66,BW09B}) that
any genus distribution contains only finitely many positive numbers and that
there are no zeros between the first and last positive numbers.

The earliest derivations \cite{FuGrSt89} of genus distributions were for closed-end ladders and for doubled-paths.  Genus distributions of bouquets, dipoles, and some related graphs were derived by \cite{GrRoTu89,KiLe98,Rie90}.  Genus distributions have been calculated more recently for various recursively specifiable sequences of graphs, including cubic outerplanar graphs~\cite{Gr11b}, 4-regular outerplanar graphs~\cite{PoKhGr11}, the $3$$\times$$n$-mesh $P_3\Box P_n$ \cite{KhPoGr12}, and cubic Halin graphs~\cite{Gr13}.  Some  calculations (e.g., \cite{ChMaZo11,ChMaZo12,ChGrMa12b}) also give the distribution of embeddings in non-orientable surfaces.

Proofs that the genus distributions of closed-end ladders and of doubled paths
are log-concave~\cite{FuGrSt89} were
based on closed formulas for those genus distributions.
Proof that the genus distributions of bouquets are log-concave~\cite{GrRoTu89}
was based on a recursion.

Stahl \cite{Stah97} used the term ``$H$-linear'' to describe chains of graphs obtained by amalgamating copies of a fixed graph $H$.  He conjectured that a number of these families of graphs have genus polynomials whose roots are real and nonpositive, which implies the log-concavity of their sequences of coefficients.  Although it was shown \cite{Wag97} that some of the families do indeed have such genus polynomials, Stahl's conjecture was disproved by Liu and Wang \cite{LW07}.

In particular, Example 6.7 of \cite{Stah97} is a sequence of $W_4$-linear graphs, in Stahl's terminology, where $W_4$ is the 4-wheel.  One of the genus polynomials of these graphs was proved to have non-real zeros in~\cite{LW07}.  We demonstrate in \S\ref{sec:iter-V}, nonetheless, that the genus distribution of every graph in this $W_4$-linear sequence is log-concave.  Thus, even though Stahl's proposed approach via roots of genus polynomials is insufficient, this paper does support Stahl's expectation that log-concavity of the genus distributions of chains of copies of a graph is a relatively accessible aspect of the general problem.  Genus distributions of several non-linear families of graphs are proved to be log-concave in \cite{GMT14}.

\subsection{Outline of this paper}  

In \S\ref{sec:LC}, we define some possible relationships applying to sequences, and we then derive some purely combinatorial results regarding these relationships, which are used in \S\ref{sec:pgd} to establish the log-concavity of the genus distributions of graphs constructed by vertex- and edge-amalgamation operations.  We briefly review the theory of partitioned genus distributions at the outset of \S\ref{sec:pgd}.  We present recurrences in \S\ref{sec:iter-V} and \S\ref{sec:iter-E} for calculating the partitioned genus distributions of the graphs in chains of graphs joined iteratively by amalgamations.  We establish in these two subsections conditions on the partitioned genus distributions of the amalgamands under which the genus distribution of the vertex- and edge-amalgamated graphs, respectively, and their partial genus distributions are log-concave.

\medskip
\section{\large New Development of Log-Concave Sequences}\label{sec:LC}

We start by reviewing basic concepts and notation for log-concave sequences.  We say that a sequence $A=(a_k)_{k=0}^n$ is \mdef{nonnegative} if $a_k\ge0$ for all $k$.  An element $a_k$ is said to be an internal zero of~$A$ if there exist indices $i$ and $j$ with  $i<k<j$, such that $a_ia_j\ne 0$ and $a_k=0$. Throughout this paper, all sequences are assumed to be nonnegative and without internal zeros.

If $a_{k-1}a_{k+1} \le a_k^2$ for all $k$, then $A$ is said to be \mdef{log-concave}.  If there exists and index $h$ with $0\le h\le n$ such that
\[
a_0\,\le\, a_1\,\le\, \cdots\,\le\, a_{h-1}\,\le\, a_h\,\ge\, a_{h+1}\,\ge\,\cdots\,\ge\, a_n,
\]
then $A$ is said to be \mdef{unimodal}.  It is well-known that any nonnegative log-concave sequence without internal zeros is unimodal, and that any nonnegative unimodal sequence has no internal zeros.  Let \hbox{$B=(b_k)_{k=0}^m$} be another sequence.  The \mdef{convolution} of $A$ and $B$, denoted as $A*B$, is defined to be the coefficient sequence
\[
\Bigl(\sum\nolimits_{i=0}^ka_ib_{k-i}\Bigr)_{k=0}^{m+n}
\]
of the product of the polynomials
\[
f(x)=\sum\nolimits_{k=0}^na_kx^k
\rmand
g(x)=\sum\nolimits_{k=0}^mb_kx^k.
\]

To avoid confusion, we remark that in mathematical analysis, some people use the terminology ``convolution'' to mean the Hadamard product $\sum_{k}a_kb_kx^k$ of the functions~$f(x)$ and~$g(x)$, which is a topic quite different from ours.  By the Cauchy-Binet theorem, the convolution of two log-concave sequences without internal zeros is log-concave;  see~\cite{Men69} and~\cite[Proposition 2]{Stan89}.

For any finite sequence $A=(a_k)_{k=0}^n$,
we identify $A$ with the infinite sequence $(a_k')_{k=-\infty}^{\infty}$, where $a_k'=a_k$ for $0\le k\le n$, and $a_k'=0$ otherwise.  It is obvious that this identification is compatible with the definitions of unimodality, log-concavity, and convolution.  In the sequel, we will frequently employ inequalities of the form $\frac{a}{b}\le \frac{c}{d}$, where $a,b,c,d\ge0$. For convenience, we consider the inequality $\frac{a}{b}\le \frac{c}{d}$ to hold by default if $a=b=0$, or $c=d=0$, or $b=d=0$.


\noi\textbf{Notation}.  We write $uA$ to denote the scalar multiple sequence $(ua_k)$, for any constant $u\ge0$.   The notation $A+B$ stands for the sequence $(a_k+b_k)$.  Greek letters $\alpha_k=a_k/a_{k-1}$ and $\beta_k=b_k/b_{k-1}$ denote ratios of successive terms of a sequence.   Thus, a sequence $A$ is log-concave if and only if the sequence $(\alpha_k)$ is non-increasing in $k$.  For $1\le i\le n$, let $A_i=(a_{i,k})_k$ be sequences.  Then we denote the indexed collection $(A_i)_{i=1}^n$ of sequences by $\A_n$.  For any sets $S$ and $T$, we write $S\le T$ (or $T\ge S$, equivalently) if $s\le t$ for all $s\in S$ and $t\in T$.

The following lemma will be useful in subsequent subsections.

\begin{lemma}\label{lem:sumpreserve} Suppose that for all $1\le i\le n$ and $1\le j\le m$,
we have $p_i$, $q_i$, $u_j$, $v_j\ge0$ and $\frac{p_i}{q_i} \,\le\, \frac{u_j}{v_j}$.  Then we have  \vsa\vsb
$$
\frac{\sum_{i=1}^np_i}{\sum_{i=1}^nq_i}  \,\le\, \frac{\sum_{j=1}^mu_j}{\sum_{j=1}^mv_j}.  $$
\end{lemma}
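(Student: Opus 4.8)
The plan is to clear all denominators at the very start, turning every hypothesis and the conclusion into polynomial inequalities among nonnegative numbers; once that is done, the lemma reduces to a single interchange of summation, and no case analysis on vanishing denominators is needed.

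First I would record that, under the default conventions stated just above the lemma, each hypothesis $\frac{p_i}{q_i}\le\frac{u_j}{v_j}$ is equivalent to the cross-multiplied inequality $p_iv_j\le u_jq_i$. When $q_i,v_j>0$ the two forms are literally equivalent, and in each of the three degenerate cases ($p_i=q_i=0$, or $u_j=v_j=0$, or $q_i=v_j=0$) both sides of $p_iv_j\le u_jq_i$ vanish, matching exactly the convention that the fractional inequality holds by default. Thus the full hypothesis is precisely
\[
p_iv_j\le u_jq_i \fa{1\le i\le n,\ 1\le j\le m}.
\]

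Next I would rephrase the conclusion in the same way. The desired inequality $\frac{\sum_i p_i}{\sum_i q_i}\le\frac{\sum_j u_j}{\sum_j v_j}$ is, after cross-multiplication, the statement
\[
\Bigl(\sum_{i=1}^n p_i\Bigr)\Bigl(\sum_{j=1}^m v_j\Bigr)\ \le\ \Bigl(\sum_{j=1}^m u_j\Bigr)\Bigl(\sum_{i=1}^n q_i\Bigr),
\]
and one checks exactly as before that this is equivalent to the fractional form under the conventions (its three default cases, $\sum p_i=\sum q_i=0$, $\sum u_j=\sum v_j=0$, and $\sum q_i=\sum v_j=0$, each make both products vanish). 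With both ends rephrased, the proof collapses to expanding each product as a double sum over the index pairs and comparing termwise:
\[
\Bigl(\sum_i p_i\Bigr)\Bigl(\sum_j v_j\Bigr)=\sum_{i=1}^n\sum_{j=1}^m p_iv_j\ \le\ \sum_{i=1}^n\sum_{j=1}^m u_jq_i=\Bigl(\sum_j u_j\Bigr)\Bigl(\sum_i q_i\Bigr),
\]
where the middle step is the termwise hypothesis summed over all $nm$ pairs. This is the cross-multiplied conclusion, hence the lemma.

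I do not anticipate a genuine obstacle in the algebra; the only point demanding care is the translation between fractional and cross-multiplied inequalities, where one must confirm that the author's three default conventions align precisely with the vanishing of the corresponding products (and, as a sanity check, that the degenerate configurations in which $\sum q_i=0$ but $\sum p_i>0$ cannot actually arise under the hypothesis, so the cross-multiplied statement never falsely weakens the claim). Once that dictionary is in place, the inequality is a one-line double-sum estimate with no branching on which denominators vanish.
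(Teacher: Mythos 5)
Your proof is correct and is essentially the paper's own argument: the paper likewise cross-multiplies and observes that the conclusion is equivalent to $\sum_{i=1}^n\sum_{j=1}^m(p_iv_j-q_iu_j)\le 0$, each summand being non-positive by the termwise hypothesis. Your additional bookkeeping verifying that the default conventions for vanishing denominators align with the cross-multiplied forms is a sound (and slightly more careful) elaboration of what the paper leaves implicit.
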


\begin{proof} \vsb
The desired inequality is equivalent to the inequality
\[
\sum_{i=1}^n\sum_{j=1}^m(p_iv_j-q_iu_j)\le 0,
\]
which is true because every summand is non-positive.
\end{proof}

In the next two subsections, we introduce the binary relations of synchronicity and ratio-dominance for log-concave sequences, and we give several properties regarding these relations and convolutions of log-concave sequences.  In \S\ref{sec:lex}, we introduce the concept of a lexicographic sequence and we establish a criterion (Theorem~\ref{thm:sumclvls}) for the ratio-dominance relation between sums of convolutions of log-concave sequences.

\smallskip
\subsection{The synchronicity relation} \label{sec:synch-def}   

We say that  two nonnegative sequences $A$ and $B$ are \mdef{synchronized},
denoted as $A\sim B$, if both are log-concave, and they satisfy
$$ a_{k-1}b_{k+1}\,\le\,a_kb_k \rmand a_{k+1}b_{k-1}\,\le\,a_kb_k \fa{k}. $$
Alternatively, the synchronicity of $A$ and $B$ can be defined by the rule
\begin{equation}\label{def:syn}
   A\sim B \quad\Longleftrightarrow\quad
   \{\alpha_k,\beta_k\}\,\ge\,\{\alpha_{k+1},\beta_{k+1}\} \fa{k}.
\end{equation}
Therefore, $A\sim B$ implies $uA\sim vB$ for any $u,v\ge0$.  In other words, scalar multiplications preserve synchronicity.  It is clear that the synchronicity relation $\sim$ is symmetric.  We should be aware of that it is not transitive.

We denote by $\tLC$ the set of indexed collections of pairwise synchronized sequences.  Let $\A_n=(A_i)_{i=1}^n$. Then $\A_n\in\tLC$ if and only if $A_i\sim A_j$ for all $1\le i<j\le n$.

We will use the next lemma to prove the synchronicity of sums of synchronized sequences.

\eject

\begin{lemma}\label{lem:add}
Let the three sequences $A$, $B$, $C$  be log-concave and nonnegative.  If $(A,B,C)\in \tLC$, then $A+B\sim C$.
\end{lemma}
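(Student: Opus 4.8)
The plan is to translate the synchronicity relation into the ratio language of \eqref{def:syn} and then reduce everything to a few applications of Lemma~\ref{lem:sumpreserve}. Write $D=A+B$ with $d_k=a_k+b_k$, and set $\delta_k=d_k/d_{k-1}=(a_k+b_k)/(a_{k-1}+b_{k-1})$ alongside $\alpha_k,\beta_k,\gamma_k$. By \eqref{def:syn}, the goal $D\sim C$ is equivalent to verifying, for every $k$, the four inequalities $\delta_k\ge\delta_{k+1}$, $\delta_k\ge\gamma_{k+1}$, $\gamma_k\ge\delta_{k+1}$, and $\gamma_k\ge\gamma_{k+1}$. The last of these is immediate, since $C$ is log-concave. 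So the real content is the first three: (i) the log-concavity of $D$, and (ii) the two mixed ratio comparisons between $D$ and $C$.

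The key observation is that $\delta_k$ is a mediant of $\alpha_k$ and $\beta_k$ and hence is squeezed between them, which is exactly the content of Lemma~\ref{lem:sumpreserve}. For the log-concavity of $D$, I would apply the lemma with left-hand fractions $a_{k+1}/a_k$ and $b_{k+1}/b_k$ against right-hand fractions $a_k/a_{k-1}$ and $b_k/b_{k-1}$; the four cross inequalities required by the lemma are precisely $\alpha_{k+1}\le\alpha_k$ and $\beta_{k+1}\le\beta_k$ (log-concavity of $A$ and $B$) together with $\alpha_{k+1}\le\beta_k$ and $\beta_{k+1}\le\alpha_k$ (from $A\sim B$), and the conclusion is $\delta_{k+1}\le\delta_k$. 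For $\delta_k\ge\gamma_{k+1}$, I would apply the lemma with the single left fraction $c_{k+1}/c_k$ against the two fractions $a_k/a_{k-1}$ and $b_k/b_{k-1}$; the hypotheses $\gamma_{k+1}\le\alpha_k$ and $\gamma_{k+1}\le\beta_k$ come from $A\sim C$ and $B\sim C$. Symmetrically, $\gamma_k\ge\delta_{k+1}$ follows from the lemma applied to the two left fractions $a_{k+1}/a_k$ and $b_{k+1}/b_k$ against the single fraction $c_k/c_{k-1}$, using $\alpha_{k+1}\le\gamma_k$ and $\beta_{k+1}\le\gamma_k$, again furnished by $A\sim C$ and $B\sim C$.

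The argument is essentially mechanical once Lemma~\ref{lem:sumpreserve} is in hand, so I do not anticipate a substantive obstacle; the only care needed is the bookkeeping of which synchronicity hypothesis feeds which cross inequality, and the degenerate cases in which some denominator $a_{k-1}+b_{k-1}$ or $c_k$ vanishes. The latter are covered by the default conventions for $\frac{a}{b}\le\frac{c}{d}$ fixed earlier in \S\ref{sec:LC} and are harmless because the sequences are nonnegative and without internal zeros. The conceptual heart of the proof is simply the mediant principle: the successive-ratio sequence of a sum interleaves those of its summands, so pairwise synchronicity with a common sequence $C$ is inherited by the sum.
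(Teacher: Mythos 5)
Your proof is correct and is essentially the paper's own argument: the paper likewise reduces $A+B\sim C$ via the ratio rule~\eqref{def:syn} to the set inequality $\bigl\{\tfrac{a_k+b_k}{a_{k-1}+b_{k-1}},\tfrac{c_k}{c_{k-1}}\bigr\}\ge\bigl\{\tfrac{a_{k+1}+b_{k+1}}{a_k+b_k},\tfrac{c_{k+1}}{c_k}\bigr\}$ and invokes Lemma~\ref{lem:sumpreserve}. You have merely spelled out explicitly the four component inequalities and which synchronicity/log-concavity hypothesis feeds each application of that lemma, bookkeeping the paper leaves implicit.
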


\begin{proof} \vsb
By Lemma~\ref{lem:sumpreserve} (or by direct calculation), we deduce that
\begin{align*}
\biggl\{{a_k+b_k\over a_{k-1}+b_{k-1}},\ {c_{k}\over c_{k-1}}\biggr\}
&\,\ge\,
\biggl\{{a_{k+1}+b_{k+1}\over a_k+b_k},\ {c_{k+1}\over c_k}\biggr\}
\fa{k}.
\end{align*}
By the synchronicity rule~(\ref{def:syn}), we infer that $A+B\sim C$.
\end{proof}

\begin{thm}\label{thm:lincomb}
Suppose that $\A_n\in \tLC$. For any numbers $u_1, v_1,u_2,v_2,\ldots,u_n,v_n\ge0$, we have
$\sum_{i=1}^n u_i A_i\sim\sum_{i=1}^n v_i A_i$.
\end{thm}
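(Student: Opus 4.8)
The plan is to verify the synchronicity $\sum_i u_iA_i\sim\sum_i v_iA_i$ directly from the ratio characterization~(\ref{def:syn}), reducing everything to a single ``master'' family of inequalities supplied by the pairwise synchronicity of the $A_i$, and then invoking Lemma~\ref{lem:sumpreserve}. Write $S=\sum_i u_iA_i=(s_k)$ and $T=\sum_i v_iA_i=(t_k)$, so that $s_k=\sum_i u_ia_{i,k}$ and $t_k=\sum_i v_ia_{i,k}$, and denote the successive ratios by $\sigma_k=s_k/s_{k-1}$, $\tau_k=t_k/t_{k-1}$, and $\alpha_{i,k}=a_{i,k}/a_{i,k-1}$. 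By~(\ref{def:syn}), proving $S\sim T$ amounts to establishing the set inequality $\{\sigma_k,\tau_k\}\ge\{\sigma_{k+1},\tau_{k+1}\}$ for every $k$, i.e. the four scalar inequalities $\sigma_k\ge\sigma_{k+1}$, $\sigma_k\ge\tau_{k+1}$, $\tau_k\ge\sigma_{k+1}$, and $\tau_k\ge\tau_{k+1}$.

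First I would record the master inequality. Since $\A_n\in\tLC$, each pair satisfies $A_i\sim A_j$, and unpacking~(\ref{def:syn}) for this pair gives $\{\alpha_{i,k},\alpha_{j,k}\}\ge\{\alpha_{i,k+1},\alpha_{j,k+1}\}$; in particular $\alpha_{i,k}\ge\alpha_{j,k+1}$ for all $i\ne j$ and all $k$, while for $i=j$ the same inequality $\alpha_{i,k}\ge\alpha_{i,k+1}$ is just the log-concavity of $A_i$, which holds since the members of $\A_n$ are log-concave. Thus $\alpha_{i,k}\ge\alpha_{j,k+1}$ holds across all ordered pairs of indices and all $k$, and this is exactly what each of the four inequalities needs.

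Next I would derive each of the four inequalities by one application of Lemma~\ref{lem:sumpreserve}. For instance, to get $\tau_{k+1}\le\sigma_k$ I apply the lemma with small-side pairs $(v_ja_{j,k+1},\,v_ja_{j,k})$ indexed by $j$ and large-side pairs $(u_ia_{i,k},\,u_ia_{i,k-1})$ indexed by $i$; its hypothesis $\frac{v_ja_{j,k+1}}{v_ja_{j,k}}\le\frac{u_ia_{i,k}}{u_ia_{i,k-1}}$ collapses to $\alpha_{j,k+1}\le\alpha_{i,k}$, which is the master inequality, so the lemma yields $\frac{\sum_j v_ja_{j,k+1}}{\sum_j v_ja_{j,k}}\le\frac{\sum_i u_ia_{i,k}}{\sum_i u_ia_{i,k-1}}$, that is $\tau_{k+1}\le\sigma_k$. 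The other three inequalities follow symmetrically, each time choosing the numerator/denominator pairs to match the ratios being compared and reducing the hypothesis to an instance of $\alpha_{\cdot,k+1}\le\alpha_{\cdot,k}$. Together with~(\ref{def:syn}), this finishes the proof.

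The step I expect to require the most care is not a deep obstacle but a bookkeeping one: checking the hypotheses and conventions of Lemma~\ref{lem:sumpreserve}. All the $u_ia_{i,k}$ and $v_ja_{j,k}$ are nonnegative, as required, but when scalars or terms vanish one must appeal to the default reading of $\frac{a}{b}\le\frac{c}{d}$ fixed in \S\ref{sec:LC} to be sure the reduction to the master inequality is legitimate and that the conclusion is the intended ratio comparison; since synchronicity is itself defined through these same cross-multiplied inequalities, everything stays consistent. I would also note that an alternative, more incremental proof is available: using Lemma~\ref{lem:add} one shows by induction on the number of summands that any sub-sum of a pairwise-synchronized collection is synchronized with every member, and then, treating one completed sum as a new member of the collection, that $S$ and $T$ are synchronized. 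The direct route above, however, avoids having to carry log-concavity of the partial sums through an induction.
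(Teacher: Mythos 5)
Your proof is correct. It differs from the paper's in organization rather than in substance: the paper observes that scalar multiplication preserves synchronicity, so the $2n$ sequences $u_iA_i$ and $v_iA_i$ are pairwise synchronized, and then applies Lemma~\ref{lem:add} iteratively, absorbing one summand at a time until only the two full sums remain --- an induction on the number of summands. You instead unroll that induction into a single direct verification: you reduce $S\sim T$ via~(\ref{def:syn}) to four ratio inequalities and prove each by one application of Lemma~\ref{lem:sumpreserve} to all $n$ summands at once, using the ``master inequality'' $\alpha_{j,k+1}\le\alpha_{i,k}$ (which for $i\ne j$ is pairwise synchronicity and for $i=j$ is log-concavity of $A_i$). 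In effect, your argument is the $n$-summand generalization of the paper's proof of Lemma~\ref{lem:add}, applied once, whereas the paper applies the two-summand case repeatedly. What your route buys is self-containment and transparency: there is no need to check that partial sums remain synchronized with every remaining sequence at each stage (a point the paper's ``iterative application'' quietly assumes), and the role of the zero-ratio conventions is isolated in a single place, which you correctly flag as the only delicate bookkeeping. What the paper's route buys is brevity on the page and reuse of infrastructure: Lemma~\ref{lem:add} is already established and is itself of independent interest (it exhibits the monoid structure of synchronized families under addition), so the theorem follows in two lines. You note this alternative yourself at the end, so you have effectively identified both proofs; either is acceptable.
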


\begin{proof} \vsb
Since scalars preserve the synchronicity relation, we see that the $2n$ sequences $u_iA_i$ and~$v_iA_i$ are pairwise synchronized.  By iterative application of Lemma~\ref{lem:add}, we infer that the sequences $\sum_{i=1}^nu_iA_i$ and $\sum_{i=1}^nv_iA_i$ are synchronized.
\end{proof}

Now we show that convolution with the same log-concave sequence preserves synchronicity of two sequences.

\begin{thm}\label{thm:convo}
Let $A,B,C$ be three log-concave nonnegative sequences without internal zeros.  If $A\sim B$, then the convolution sequences $A*C$ and $B*C$ are synchronized.
\end{thm}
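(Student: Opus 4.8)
The plan is to verify $P:=A*C=(p_k)$ and $Q:=B*C=(q_k)$ are synchronized straight from the definition. Since $A,B,C$ are log-concave and free of internal zeros, the convolutions $P$ and $Q$ are again log-concave, and (being convolutions of interval-supported sequences) without internal zeros; so by the definition of synchronicity it remains only to establish the two ``cross'' inequalities
\[
p_{k-1}q_{k+1}\le p_kq_k \rmand p_{k+1}q_{k-1}\le p_kq_k \fa{k}.
\]
These two are interchanged by swapping the roles of $A$ and $B$ (equivalently $P$ and $Q$), and $\sim$ is symmetric, so it suffices to prove the second. I note at the outset that a naive term-by-term estimate through Lemma~\ref{lem:sumpreserve} is doomed here: such an estimate would only involve the ratios of $C$ and would never invoke the hypothesis $A\sim B$, whereas synchronicity of $A$ and $B$ is genuinely needed (the relevant bracketed coefficients, after expansion, are not pointwise nonnegative).

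The key step is to read $p_{k+1}q_{k-1}\le p_kq_k$ as the nonnegativity of a $2\times2$ determinant and then to factor that determinant through $C$. Writing $p_j=\sum_i a_ic_{j-i}$ and $q_j=\sum_i b_ic_{j-i}$, one checks that
\[
\begin{pmatrix} p_k & p_{k+1}\\ q_{k-1}& q_k\end{pmatrix}=\Psi\,\Xi,
\]
where $\Psi$ is the two-rowed matrix with $\Psi_{1,j}=a_j$ and $\Psi_{2,j}=b_{j-1}$ (that is, $A$ placed over a once-shifted copy of $B$), and $\Xi$ is the two-columned matrix with $\Xi_{j,1}=c_{k-j}$ and $\Xi_{j,2}=c_{k+1-j}$. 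Because $A,B,C$ are finitely supported, only finitely many columns of $\Psi$ and rows of $\Xi$ are nonzero, so the Cauchy--Binet theorem applies and yields
\[
p_kq_k-p_{k+1}q_{k-1}=\det(\Psi\Xi)=\sum_{r<s}\bigl(a_rb_{s-1}-a_sb_{r-1}\bigr)\bigl(c_{k-r}c_{k+1-s}-c_{k-s}c_{k+1-r}\bigr).
\]

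It then remains to see that every summand is nonnegative. The first factor $a_rb_{s-1}-a_sb_{r-1}$ (a $2\times2$ minor of $\Psi$) is nonnegative for $r<s$: it says that the ratios $a_t/b_{t-1}$ are non-increasing in $t$, which follows by chaining the single-step inequality $a_{t+1}b_{t-1}\le a_tb_t$ --- exactly one of the synchronicity conditions on $A$ and $B$ (the no-internal-zeros hypothesis lets us pass freely between the ratio and product forms). The second factor $c_{k-r}c_{k+1-s}-c_{k-s}c_{k+1-r}$ (a $2\times2$ minor of $\Xi$) is nonnegative for $r<s$ because it amounts to $\gamma_{k+1-s}\ge\gamma_{k+1-r}$ for the ratio sequence $\gamma$ of $C$, which holds since $C$ is log-concave and $k+1-s<k+1-r$. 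Hence $\det(\Psi\Xi)\ge0$, giving $p_{k+1}q_{k-1}\le p_kq_k$; the companion inequality $p_{k-1}q_{k+1}\le p_kq_k$ follows verbatim after interchanging $A$ and $B$, now invoking the other synchronicity condition, and therefore $P\sim Q$.

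I expect the main obstacle to be the discovery of the correct factorization --- in particular the one-step shift built into $\Psi$ (pairing $A$ with a shifted $B$) together with the matching shift in $\Xi$. This is precisely what makes the two families of $2\times2$ minors align with the two available hypotheses, the minors of $\Psi$ being governed by synchronicity of $A,B$ and those of $\Xi$ by log-concavity of $C$; any mismatch in these shifts produces minors of indefinite sign and the Cauchy--Binet argument collapses.
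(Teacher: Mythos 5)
Your proof is correct, and it reaches the paper's inequality by a genuinely different mechanism: the Cauchy--Binet theorem, rather than the paper's hand-built matching argument. The paper proves $(A*C)_{k-1}(B*C)_{k+1}\le(A*C)_k(B*C)_k$ by expanding both products into explicit arrays of terms $a_ib_jc_{\cdot}c_{\cdot}$ (of mismatched shapes, $k\times(k+2)$ versus $(k+1)\times(k+1)$), comparing column $0$ against column $0$, the last column of one array against the last row of the other, and then pairing the remaining entries by reflection across the diagonal of square subarrays, checking each comparison against log-concavity of $C$ or synchronicity of $A,B$. Your factorization of the matrix with rows $(p_k,p_{k+1})$ and $(q_{k-1},q_k)$ as $\Psi\Xi$ makes that pairing fall out automatically: up to the symmetry $A\leftrightarrow B$ (you prove the mirror-image cross inequality first) and the relabeling $r=j$, $s=i+1$, your Cauchy--Binet summands $(a_rb_{s-1}-a_sb_{r-1})(c_{k-r}c_{k+1-s}-c_{k-s}c_{k+1-r})$ are exactly the paired differences the paper assembles in \eqref{eq:sum1} and \eqref{eq:sum2}, but the boundary columns, rows, and diagonal need no separate treatment, since minors involving out-of-support indices simply vanish. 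This is a cleaner organization, and it unifies the theorem with the classical Cauchy--Binet proof, already cited in the paper, that convolution preserves log-concavity. One caveat, which you share with the paper rather than fall below it: passing from the one-step inequality $a_{t+1}b_{t-1}\le a_tb_t$ to the long-range minor inequality $a_rb_{s-1}\ge a_sb_{r-1}$ is a chaining of ratios, which in the presence of vanishing terms relies on the paper's conventions for degenerate fractions; your appeal to the no-internal-zeros hypothesis at that step is exactly as formal as the paper's own chain $b_{i+1}/a_i\le\cdots\le b_j/a_{j-1}$.
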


\begin{proof} \vsb
Since the convolution of two log-concave sequences without internal zeros is log-concave, it follows that the sequences $A*C$ and $B*C$ are log-concave.

To prove that the sequences $A*C$ and $B*C$ are synchronized, we arrange the terms of the product $(A*C)_{k-1}(B*C)_{k+1}$ into an array of $k$ rows of $k+2$ terms each, in which we number the rows and columns starting at zero, where row $j$ is
$$a_{j}b_0 c_{k-j-1}c_{k+1}\quad  a_{j}b_1 c_{k-j-1}c_{k}\quad \ldots\quad a_{j}b_{k+1} c_{k-j-1}c_{0}.$$
We similarly arrange the terms of the product $(A*C)_{k}(B*C)_{k}$ into an array of $k+1$ rows of $k+1$ terms each, in which row $j$ is
$$a_{j}b_0 c_{k-j}c_{k}\quad  a_{j}b_1 c_{k-j}c_{k-1}\quad \ldots\quad a_{j}b_{k} c_{k-j}c_{0}.$$
To prove that $(A*C)_{k-1}(B*C)_{k+1} \le (A*C)_{k}(B*C)_{k}$, we will demonstrate that the sum of the terms in the second array is at least as large as the sum of the terms in the first array.

We observe that the $k$ terms in column 0 of the first array are
$$a_0b_0 c_{k-1}c_{k+1}\quad  a_1b_0 c_{k-2}c_{k+1}\quad \ldots\quad a_{k-1}b_0 c_0c_{k+1}$$
and that the first $k$ terms in column 0 of the second array are
$$a_0b_0 c_k c_k \quad  a_1b_0 c_{k-1}c_k \quad \ldots\quad a_{k-1}b_0 c_1c_k.$$
Each term $a_jb_0c_{k-j-1}c_{k+1}$ is less than or equal to the corresponding term $a_jb_0c_{k-j}c_{k}$, since, by  log-concavity of $C$, respectively, we have
$$\frac{c_{k+1}}{c_k} \le \frac{c_{k}}{c_{k-1}} \le \cdots \le\frac{c_{k-j}}{c_{k-j-1}}.$$
We observe further that the $k$ terms in the $(k+1)$-st column of the first array are
$$a_0b_{k+1}c_{k-1}c_0\quad  a_1b_{k+1}c_{k-2}c_0\quad \ldots\quad a_{k-1}b_{k+1}c_0c_0$$
and that, excluding the term in column 0, the other $k$ terms in row $k$ of the second array are
$$a_kb_1c_0c_{k-1}\quad  a_kb_2c_0c_{k-2}\quad \ldots\quad a_kb_{k}c_0c_0.$$
The term $a_jb_{k+1}c_{k-j-1}c_0$ of the first array is less than or equal to the corresponding term $a_kb_{j+1}c_{k-j-1}c_0$ of the second array, since (by synchronicity of $A$ and $B$)
$$\frac{b_{k+1}}{a_k} \le \frac{b_{k}}{a_{k-1}}  \le \cdots \le \frac{b_{j+1}}{a_{j}}.$$
 by rows~$0$ through $k-1$ and columns 1 \hbox{through $k$}, with the sum of the entries in the square subarray of the second array formed by rows 0 through $k-1$ and columns 1 through $k$.

The terms on the main diagonals of these two square arrays are equal.  We now consider the sum  \begin{equation} \label{eq:sum1}
a_i b_j c_{k-1-i} c_{k+1-j} \,+\, a_{j-1}b_{i+1}c_{k-j}c_{k-i}
\end{equation}
of any term with $i>j$, and, hence, below the main diagonal of the first square subarray, and the term whose location is its reflection through that main diagonal.  We compare this to the sum
\begin{equation} \label{eq:sum2}
a_ib_jc_{k-i}c_{k-j} \,+\, a_{j-1}b_{i+1}c_{k+1-j}c_{k-1-i}
\end{equation}
of the two terms in the corresponding locations of the second square subarray.

By synchronicity of sequences $A$ and $B$, and since $j<i$, we have
$$\frac{b_{i+1}}{a_i} \le \frac{b_{i}}{a_{i-1}}  \le \cdots \le \frac{b_{j}}{a_{j-1}},$$
and thus, $a_{j-1}b_{i+1} \,\le\, a_ib_j$.
By log-concavity of $C$, we have
\[
c_{k-j}c_{k-i}-c_{k+1-j}c_{k-1-i} \ge 0.
\]
It follows that
\[
a_{j-1}b_{i+1}(c_{k-j}d_{k-i}-c_{k+1-j}d_{k-1-i})\le a_ib_j (c_{k-j}d_{k-i}-c_{k+1-j}d_{k-1-i}).
\]
We conclude that the sum \eqref{eq:sum1} is less than or equal to the sum \eqref{eq:sum2}, and, accordingly, that
$$(A*C)_{k-1}(B*C)_{k+1} \le (A*C)_{k}(B*C)_{k}.$$
A similar argument establishes that
$$(B*C)_{k-1}(A*C)_{k+1} \le (B*C)_{k}(A*C)_{k}.$$
Thus, the sequences $A*C$ and $B*C$ are synchronized.
\end{proof}

\smallskip
\subsection{The ratio-dominance relation}\label{sec:RD}  

Let $A=(a_k)$ and $B=(b_k)$ be two nonnegative sequences.  We say that $B$ is \mdef{ratio-dominant} over~$A$, denoted as $B\gs A$ (or \hbox{$A\ls B$} equivalently), if $A\sim B$ and $a_{k+1}b_k\le a_kb_{k+1}$ for all $k$.  Alternatively, the ratio-dominance relation can be defined by
\begin{equation}\label{def:RD}
A\,\ls\, B  \quad\Longleftrightarrow\quad \beta_{k+1} \,\le\, \alpha_k \,\le\, \beta_k \fa{k}.
\end{equation}
It is clear that $A\ls B$ implies both $A$ and $B$ are log-concave, and $uA\ls vB$ for any $u,v\ge0$.  In other words, the scalars preserve the ratio-dominance relation.

From Definition \eqref{def:RD}, we can see that transitivity of the ratio-dominance relation holds if the sequences are pairwise synchronized.  Lemma \ref{lem:A:C}  will be used in proving Corollary~\ref{cor:AC:BD}.
\smallskip

\begin{lemma}\label{lem:A:C}
Let $A,B,C$ be three log-concave nonnegative sequences.  If $A\ls B$, $B\ls C$ and $A\sim C$, then $A\ls C$.   \qed
\end{lemma}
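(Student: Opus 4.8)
The plan is to work entirely with the ratios of successive terms, exploiting the alternative characterizations \eqref{def:syn} and \eqref{def:RD} rather than the elementwise product inequalities. Write $\gamma_k=c_k/c_{k-1}$ alongside $\alpha_k=a_k/a_{k-1}$ and $\beta_k=b_k/b_{k-1}$. Since $A\sim C$ is already assumed, the definition \eqref{def:RD} shows that establishing $A\ls C$ reduces to verifying the two ratio inequalities $\gamma_{k+1}\le\alpha_k\le\gamma_k$ for every $k$.

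First I would dispatch the left inequality $\gamma_{k+1}\le\alpha_k$. This is exactly where the hypothesis $A\sim C$ does its work: by the synchronicity rule \eqref{def:syn}, $\{\alpha_k,\gamma_k\}\ge\{\alpha_{k+1},\gamma_{k+1}\}$, and the elementwise reading of this set inequality includes the cross-term $\alpha_k\ge\gamma_{k+1}$ directly, for all $k$. No further argument is needed here.

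Next I would handle the right inequality $\alpha_k\le\gamma_k$ by transitively chaining the two given dominances through $B$. Applying \eqref{def:RD} to $A\ls B$ yields $\alpha_k\le\beta_k$, and applying it to $B\ls C$ yields $\beta_k\le\gamma_k$; composing gives $\alpha_k\le\beta_k\le\gamma_k$, hence $\alpha_k\le\gamma_k$ for all $k$. Combining the two steps produces $\gamma_{k+1}\le\alpha_k\le\gamma_k$, which together with the assumed $A\sim C$ is precisely $A\ls C$ by \eqref{def:RD}.

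The point to be careful about — and the reason the hypothesis $A\sim C$ cannot be dropped — is that synchronicity is not transitive, so the relations $A\sim B$ and $B\sim C$ built into $A\ls B$ and $B\ls C$ do not by themselves deliver $A\sim C$. The left inequality $\gamma_{k+1}\le\alpha_k$ is genuinely the synchronized part of ratio-dominance and relies on $A\sim C$ being supplied as a hypothesis; the chaining argument alone recovers only $\alpha_k\le\gamma_k$. Thus the main obstacle here is conceptual rather than computational: recognizing that the added synchronicity hypothesis furnishes exactly the one inequality that pure transitivity of the ratio condition cannot.
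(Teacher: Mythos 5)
Your proof is correct and follows exactly the route the paper intends: the paper states the lemma with the proof omitted, remarking just beforehand that ``transitivity of the ratio-dominance relation holds if the sequences are pairwise synchronized'' by Definition~\eqref{def:RD}, which is precisely your chaining $\alpha_k\le\beta_k\le\gamma_k$ combined with extracting $\gamma_{k+1}\le\alpha_k$ from the hypothesis $A\sim C$. You have simply written out the details the authors considered immediate, including the correct observation of why $A\sim C$ must be assumed rather than derived.
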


Before exploring basic properties of ratio-dominance, we mention two connections between our ratio-dominance relation and constructs that have been introduced in previous studies of log-concavity.
\smallskip

A nonnegative sequence $A=(a_k)$ is said to be {\em ultra-log-concave of order~$n$},
if the sequence $(a_k/\binom{n}{k})_{k=0}^n$ is log-concave, and $a_k=0$ for $k>n$.
So $A$ is ultra-log-concave of order~$\infty$ if the sequence~$(k!a_k)$ is log-concave;
see~\cite{Lig97}.  On the other hand,
by~(\ref{def:RD}),
the ratio-dominance of $(ka_k)$ over~$A$ reads as
$$ {(k+1)a_{k+1}\over ka_k}\le{a_k\over a_{k-1}}\le {ka_k\over (k-1)a_{k-1}}. $$
While the second inequality holds trivially, the first inequality represents
the log-concavity of the sequence $(k!a_k)$.
Therefore, the sequence $A$ is ultra-log-concave of order~$\infty$
if and only if the sequence $(ka_k)$ is ratio-dominant over $A$.
\smallskip

The other connection relates to Borcea et al.'s work~\cite[Section 6]{BBL09},
which studied the negative dependence properties of almost exchangeable measures.
More precisely, they considered log-concave sequences $A$ and $B$ such that
\begin{enumerate}[(i)]
\item\vsc the sequence $\bigl(\theta a_k+(1-\theta)b_k\bigr)$ is log-concave for all $0\le \theta\le 1$, and
\item $a_kb_{k+1}\ge a_{k+1}b_k$ for all $k$.
\end{enumerate}
It is clear that $A\sim B$ implies Condition~(i), and thus, $A\ls B$ implies both~(i) and~(ii).
As will be seen in the next section, to get the log-concavity of some genus distributions,
however, verifying the ratio-dominance relation
appears to be easier than checking~(i).
\smallskip

We now give some basic observations regarding the ratio-dominance relation.  First of all, we shall see by the following theorem that pairwise synchronized sequences can be characterized in terms of ratio-dominance relations.   Let $\A_n=(A_i)_{i=1}^n$ be a sequence of nonnegative sequences.

\noi\textbf{Notation}.  In what follows, we use $\alpha_{j,k}$ and $\beta_{j,k}$ to denote the ratios $a_{j,k}/a_{j,k-1}$ and  $b_{j,k}/b_{j,k-1}$, respectively.

\eject 

\begin{thm} \label{thm:lc-equiv}
The following statements are equivalent.
\begin{itemize}
\item[(i)]  \vsb $\A_n\in \tLC$.
\item[(ii)]  There exists a sequence $B$ such that $\A_n\ls B$.
\item[(iii)]  There exists a sequence $C$ such that $\A_n\gs C$.
\end{itemize}
\end{thm}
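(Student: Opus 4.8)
The plan is to prove the three-way equivalence through the two implications (i)$\Ra$(ii) and (ii)$\Ra$(i), and then to read off (iii) by the order-reversing dual of the same argument. The reformulation I would record at the outset is that, after unpacking the synchronicity rule~\eqref{def:syn} over every pair of indices, the hypothesis $\A_n\in\tLC$ is equivalent to
\[
\max_{1\le i\le n}\alpha_{i,k}\ \le\ \min_{1\le i\le n}\alpha_{i,k-1}\fa{k}.
\]
Indeed, $A_i\sim A_j$ says $\min\{\alpha_{i,k-1},\alpha_{j,k-1}\}\ge\max\{\alpha_{i,k},\alpha_{j,k}\}$, and quantifying over all pairs $(i,j)$ (taking the minimum on the left and the maximum on the right) collapses to the displayed inequality; the converse is immediate, and the case $i=j$ recovers the log-concavity of each $A_i$.

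For (i)$\Ra$(ii) I would build the dominating sequence $B$ through its successive ratios, setting $\beta_k:=\max_i\alpha_{i,k}$ (any value in the nonempty interval $[\max_i\alpha_{i,k},\ \min_i\alpha_{i,k-1}]$ would do equally well). The reformulation gives at once $\beta_k=\max_i\alpha_{i,k}\le\min_i\alpha_{i,k-1}\le\max_i\alpha_{i,k-1}=\beta_{k-1}$, so $(\beta_k)$ is non-increasing and $B$ is log-concave. To obtain $A_i\ls B$ it then suffices, by~\eqref{def:RD}, to check $\beta_{k+1}\le\alpha_{i,k}\le\beta_k$: the upper bound holds because $\alpha_{i,k}\le\max_j\alpha_{j,k}=\beta_k$, and the lower bound is $\beta_{k+1}=\max_j\alpha_{j,k+1}\le\min_j\alpha_{j,k}\le\alpha_{i,k}$, again by the reformulation. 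The dual choice $\gamma_k:=\min_i\alpha_{i,k}$ produces a sequence $C$ with $\alpha_{i,k+1}\le\gamma_k\le\alpha_{i,k}$, i.e.\ $C\ls A_i$ for every $i$, which is exactly $\A_n\gs C$; this proves (i)$\Ra$(iii).

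For (ii)$\Ra$(i) I would show that a single dominating $B$ already forces pairwise synchronicity. From $\A_n\ls B$ each $A_i$ is log-concave and satisfies $\beta_{k+1}\le\alpha_{i,k}\le\beta_k$. Fixing two indices $i,j$, the cross inequalities $\alpha_{i,k}\ge\beta_{k+1}\ge\alpha_{j,k+1}$ and $\alpha_{j,k}\ge\beta_{k+1}\ge\alpha_{i,k+1}$ combine with the log-concavity inequalities $\alpha_{i,k}\ge\alpha_{i,k+1}$ and $\alpha_{j,k}\ge\alpha_{j,k+1}$ to yield $\min\{\alpha_{i,k},\alpha_{j,k}\}\ge\max\{\alpha_{i,k+1},\alpha_{j,k+1}\}$, which is precisely $A_i\sim A_j$. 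Hence $\A_n\in\tLC$, and (iii)$\Ra$(i) is the verbatim dual with $C$ in place of $B$ and the inequalities reversed.

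The step I expect to demand the most care is not any of these inequalities, which are all short, but the bookkeeping around zeros and differing supports. At the ends of the supports of the $A_i$ the ratios $\alpha_{i,k}$ degenerate to forms such as $x/0$ or $0/0$, so the quantities $\max_i\alpha_{i,k}$ and $\min_i\alpha_{i,k}$ must be interpreted through the default conventions adopted for $\frac{a}{b}\le\frac{c}{d}$, and the reconstructed sequences $B$ and $C$ must be exhibited as genuine nonnegative sequences without internal zeros (one fixes a single support interval large enough to accommodate all the $A_i$ and checks that no internal zero is introduced). I would dispatch this by first running the ratio arguments literally in the generic case where all the $A_i$ are positive on a common range of indices, and then verifying that the boundary cases are absorbed by the stated conventions.
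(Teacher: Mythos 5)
Your proof is correct and takes essentially the same route as the paper's: both construct the dominating sequence $B$ by prescribing its successive ratios $\beta_k=\max_i\alpha_{i,k}$ (the paper anchors the construction explicitly, setting $b_{k_0}=1$ on a support $[k_0,h_0]$ determined by the latest starting index and latest ending index of the $A_i$, which is exactly the support bookkeeping you flag as the delicate point), and both prove (ii)$\Ra$(i) by chaining $\alpha_{j,k+1}\le\beta_{k+1}\le\alpha_{i,k}$ from the ratio-dominance definition, with (iii) handled by the dual (min in place of max) argument.
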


\begin{proof} \vsb
We shall show equivalence of (i) and (ii).  The equivalence of (i) and (iii) is along the same line.  Suppose that $\A_n \in \tLC$.  Then let $k_0$ be the largest index~$k$ such that there exists an integer $i$ such that $1\le i\le n$, with
$$
a_{i,0}=a_{i,1}=\cdots=a_{i,k-1}=0\quad\text{ and }\quad a_{i,k}>0.
$$
Let $h_0$ be the largest index~$h$ such that there exists some $1\le i\le n$ with $a_{i,h}>0$.
Consider the sequence~$B$ defined by $b_{k_0}=1$ and
$$
\beta_k=\max_{1\le j\le n}\alpha_{j,k} \fa{k_0+1\le k\le h_0}.
$$
Let $1\le i,j\le n$.
The synchronicity $A_i\sim A_j$ implies that $\alpha_{j,k+1}\le\alpha_{i,k}$.  Thus,
\begin{equation}
\beta_{k+1}=\max_{1\le j\le n}\alpha_{j,k+1}\le\alpha_{i,k}.
\end{equation}
From Definition~(\ref{def:RD}), we deduce that $A_i\ls B$, which proves (ii). Conversely, suppose that $\A_n\ls B$.  Since \hbox{$A_j\ls B$}, we have $\alpha_{j,k+1}\le\beta_{k+1}$;  since \hbox{$A_i\ls B$}, we have $\beta_{k+1}\le\alpha_{i,k}$.  Thus $\alpha_{j,k+1}\le\alpha_{i,k}$. By symmetry, we have $\alpha_{i,k+1}\le\alpha_{j,k}$.  Therefore, $A_i\sim A_j$.
\end{proof}

\noi\textbf{Notation}.  The notations $\A_n\in \ltLC$ (and, respectively, $\A_n\in \gtLC$) if $A_i\ls A_j$ (resp., $A_i\gs A_j$), for all $1\le i<j\le n$, facilitate expression of a ratio-dominance analogue to \hbox{Theorem \ref{thm:lc-equiv}}.

\begin{thm}\label{thm:ls}
The following statements are equivalent.
\begin{itemize}
\item[(i)] \vsc $\A_n\in \ltLC$.
\item[(ii)] $A_1\ls A_2$, $A_2\ls A_3$, $\ldots$, $A_{n-1}\ls A_n$, and $A_1\sim A_n$.
\item[(iii)] $\alpha_{1,k}\le \alpha_{2,k}\le\cdots\le\alpha_{n,k}\le\alpha_{1,k-1}$ for every $k\ge1$.
\end{itemize}
\end{thm}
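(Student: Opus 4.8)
The plan is to prove the cyclic chain of implications (i)$\Rightarrow$(ii)$\Rightarrow$(iii)$\Rightarrow$(i), using throughout the equivalent form~\eqref{def:RD} of ratio-dominance, namely $A\ls B \Leftrightarrow \beta_{k+1}\le\alpha_k\le\beta_k$ for all $k$, which (as already noted after~\eqref{def:RD}) subsumes both log-concavity and synchronicity. The implication (i)$\Rightarrow$(ii) is immediate: if $A_i\ls A_j$ holds for every pair $i<j$, then it holds in particular for consecutive indices, and the single relation $A_1\ls A_n$ forces $A_1\sim A_n$ since ratio-dominance entails synchronicity.

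For (ii)$\Rightarrow$(iii), I would unfold each consecutive relation $A_i\ls A_{i+1}$ through~\eqref{def:RD} into $\alpha_{i+1,k+1}\le\alpha_{i,k}\le\alpha_{i+1,k}$ for all $k$. The right-hand inequalities $\alpha_{i,k}\le\alpha_{i+1,k}$, chained over $i=1,\dots,n-1$, yield the increasing run $\alpha_{1,k}\le\alpha_{2,k}\le\cdots\le\alpha_{n,k}$ at each fixed $k$. It then remains to close the cascade with the tail inequality $\alpha_{n,k}\le\alpha_{1,k-1}$, and this is exactly where the wrap-around hypothesis $A_1\sim A_n$ enters: expanding it via~\eqref{def:syn} gives $\{\alpha_{1,k-1},\alpha_{n,k-1}\}\ge\{\alpha_{1,k},\alpha_{n,k}\}$, whose cross term $\alpha_{1,k-1}\ge\alpha_{n,k}$ is precisely the missing relation.

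For (iii)$\Rightarrow$(i), fix a pair $i<j$ and verify the two inequalities of~\eqref{def:RD} for $(A_i,A_j)$. The inequality $\alpha_{i,k}\le\alpha_{j,k}$ is read directly off the monotone run in (iii). For $\alpha_{j,k+1}\le\alpha_{i,k}$, I would chain $\alpha_{j,k+1}\le\alpha_{n,k+1}\le\alpha_{1,k}\le\alpha_{i,k}$, invoking the monotone run at level $k+1$, then the wrap-around $\alpha_{n,k+1}\le\alpha_{1,k}$ supplied by (iii), then the monotone run at level $k$. Thus $\alpha_{j,k+1}\le\alpha_{i,k}\le\alpha_{j,k}$ for all $k$, so $A_i\ls A_j$ by~\eqref{def:RD}; since $i<j$ was arbitrary, $\A_n\in\ltLC$.

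The only genuine subtlety is the role of the wrap-around condition, and I expect it to be the one point worth highlighting rather than a computational obstacle. The consecutive ratio-dominances in (ii) by themselves deliver only $\alpha_{i+1,k}\le\alpha_{i,k-1}$ (shifting the index in $\alpha_{i+1,k+1}\le\alpha_{i,k}$), which runs the wrong direction to produce $\alpha_{n,k}\le\alpha_{1,k-1}$; the extra hypothesis $A_1\sim A_n$ is exactly what bridges the two ends of the chain. Correspondingly, the tail inequality $\alpha_{n,k}\le\alpha_{1,k-1}$ in (iii) is the combinatorial heart of the statement, and the remaining bookkeeping is routine once~\eqref{def:RD} and~\eqref{def:syn} are applied in the right order.
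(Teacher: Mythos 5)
Your proposal is correct and follows essentially the same route as the paper: the cyclic implications (i)$\Rightarrow$(ii)$\Rightarrow$(iii)$\Rightarrow$(i), unfolding consecutive ratio-dominances via~(\ref{def:RD}) and the wrap-around synchronicity via~(\ref{def:syn}) to get (iii), and then the chained inequality $\alpha_{j,k+1}\le\alpha_{n,k+1}\le\alpha_{1,k}\le\alpha_{i,k}\le\alpha_{j,k}$ for (iii)$\Rightarrow$(i), which is exactly the paper's displayed chain. Your closing observation that the consecutive relations alone only yield $\alpha_{n,k}\le\alpha_{1,k-(n-1)}$, so that $A_1\sim A_n$ is genuinely needed, is a correct and worthwhile remark that the paper leaves implicit.
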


\begin{proof} \vsb
It is clear that (i) implies (ii).  Suppose that (ii) holds true.  Then, for any $1\le i\le n-1$, the relation $A_i\ls A_{i+1}$ implies $\alpha_{i,k}\le\alpha_{i+1,k}$; and the synchronicity $A_1\sim A_n$ implies $\alpha_{n,k}\le\alpha_{1,k-1}$.  This proves (iii). Now, suppose that (iii) holds true.  To show (i), it suffices to prove that $A_i\ls A_j$ for every pair $i,j$ such that $1\le i<j\le n$.  In fact,
$$
\alpha_{j,k+1}
\,\le\,
\alpha_{j+1,k+1}
\,\le\,
\cdots
\,\le\,
\alpha_{n,k+1}
\,\le\,
\alpha_{1,k}
\,\le\,
\alpha_{2,k}
\,\le\,
\cdots
\,\le\,
\alpha_{i,k}
\,\le\,
\cdots
\,\le\,
\alpha_{j,k}.
$$
In particular, we have $\alpha_{j,k+1}\le\alpha_{i,k}\le\alpha_{j,k}$.
By~(\ref{def:RD}), we deduce $\A_n\in \ltLC$. This completes the proof.
\end{proof}

Now we give one of the main results of in this section,
which will be used  in the next subsection to establish several ratio-dominance relations between convolutions.

\begin{thm}\label{thm:pwconvo}
Let $A,B,C,D$ be four nonnegative sequences without internal zeros.  If $A\lesssim B$ and $C\lesssim D$, then the convolution sequences $A*D$ and $B*C$ are synchronized.
\end{thm}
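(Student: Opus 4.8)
The plan is to avoid comparing $A*D$ and $B*C$ head-on, and instead to exhibit a single sequence that ratio-dominates both of them; synchronicity will then follow formally from Theorem~\ref{thm:lc-equiv}. The natural candidate is $B*D$, the convolution of the two ``larger'' sequences. So the target becomes the two one-sided relations $A*D\lesssim B*D$ and $B*C\lesssim B*D$, after which Theorem~\ref{thm:lc-equiv} (applied to the pair $(A*D,\,B*C)$ with common upper bound $B*D$) gives $A*D\sim B*C$ at once.

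To get the two one-sided relations I would first prove the following self-contained claim: \emph{if $X\lesssim Y$ and $Z$ is log-concave, nonnegative and without internal zeros, then $X*Z\lesssim Y*Z$}, i.e.\ ratio-dominance is preserved by convolution against a fixed log-concave sequence. The synchronicity half, $X*Z\sim Y*Z$, is immediate from Theorem~\ref{thm:convo}, since $X\lesssim Y$ entails $X\sim Y$. For the ratio half I would form the difference
\[
\Delta_k := (X*Z)_k(Y*Z)_{k+1} - (X*Z)_{k+1}(Y*Z)_k = \sum_{i,j} x_iy_j\bigl(z_{k-i}z_{k+1-j}-z_{k+1-i}z_{k-j}\bigr),
\]
and pair the $(i,j)$ term with the $(j,i)$ term. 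The diagonal terms vanish, and each off-diagonal pair collapses to
\[
(x_iy_j-x_jy_i)\bigl(z_{k-i}z_{k+1-j}-z_{k+1-i}z_{k-j}\bigr).
\]
Taking $i<j$ as representative, both factors are nonnegative: the first because $X\lesssim Y$ forces the ratios $x_m/y_m$ to be non-increasing, so $x_i/y_i\ge x_j/y_j$; the second because log-concavity of $Z$ makes the ratios $z_{m+1}/z_m$ non-increasing, so $z_{k-i}z_{k+1-j}\ge z_{k+1-i}z_{k-j}$ whenever $k-i>k-j$. Hence $\Delta_k\ge0$, which is exactly the ratio-dominance inequality defining $X*Z\lesssim Y*Z$.

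Applying this claim twice finishes the proof. Taking $Z=D$ gives $A*D\lesssim B*D$ from $A\lesssim B$; taking $Z=B$ and using commutativity of convolution gives $B*C=C*B\lesssim D*B=B*D$ from $C\lesssim D$. Thus $A*D$ and $B*C$ are both ratio-dominated by the single sequence $B*D$, so Theorem~\ref{thm:lc-equiv} yields that $(A*D,\,B*C)\in\tLC$, i.e.\ $A*D\sim B*C$, as required. Note that the claim is proved from Theorem~\ref{thm:convo} and an elementary pairing only, so it does not rely on Theorem~\ref{thm:pwconvo} and there is no circularity even though an analogous convolution statement reappears later.

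I expect the main obstacle to be resisting the temptation to mimic the array/reflection argument used for Theorem~\ref{thm:convo}. Because $A*D$ and $B*C$ are convolved against \emph{different} sequences ($D$ versus $C$), the kernel $d_{k-i}c_{k-j}-d_{k-1-i}c_{k+1-j}$ governing $(A*D)_{k-1}(B*C)_{k+1}\le(A*D)_k(B*C)_k$ is not antisymmetric, and the term-by-term reflection $(i,j)\mapsto(j-1,i+1)$ genuinely fails: one can build $C\lesssim D$ (with rapidly decreasing ratios) for which the reflected pair inequality is violated, so no local comparison can succeed. The decisive structural observation is that this obstruction evaporates once a \emph{common} convolving sequence is used, because the kernel $z_{k-i}z_{k+1-j}-z_{k+1-i}z_{k-j}$ above \emph{is} antisymmetric under $(i,j)\mapsto(j,i)$, making the naive pairing valid. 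Routing through $B*D$ is precisely what converts the intractable two-sided comparison into two tractable one-sided ones, with Theorem~\ref{thm:lc-equiv} supplying the final bookkeeping.
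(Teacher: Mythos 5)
Your proof is correct, and it takes a genuinely different route from the paper's. The paper compares $A*D$ and $B*C$ head-on: writing $s=A*D$ and $t=B*C$, it proves $s_{n+1}/s_n\le t_n/t_{n-1}$ by an analytic descent --- first checking $\partial(s_{n+1}/s_n)/\partial d_0\le 0$ so that $d_0$ may be replaced by $0$, then pushing the resulting bound through a chain of intermediate quantities $g_0\le g_1\le\cdots\le g_n$ (Lemma~\ref{lem:ineq2:b}, proved by differentiating in $a_{n-k}$ and substituting the extremal value $a_{n-k}=b_{n-k}a_{n-k-1}/b_{n-k-1}$ permitted by $A\ls B$), and finally comparing $g_n$ with $t_n/t_{n-1}$ by an antisymmetrization over pairs $i<j$; the companion inequality follows by symmetry. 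You avoid the two-sided comparison entirely by interposing the mediator $B*D$: your lemma that convolution with a fixed log-concave sequence preserves ratio-dominance (synchronicity from Theorem~\ref{thm:convo}, the ratio inequality from the antisymmetric-kernel pairing, which is sound) gives $A*D\ls B*D$ and $B*C=C*B\ls D*B=B*D$, and Theorem~\ref{thm:lc-equiv}, direction (ii)$\Rightarrow$(i), then finishes. Your dependencies (Theorems~\ref{thm:convo} and~\ref{thm:lc-equiv}) both precede Theorem~\ref{thm:pwconvo} and do not rely on it, so you are right that there is no circularity. What your route buys: it is shorter and structural, it identifies a concrete common ratio-dominator $B*D$ rather than only synchronicity, and it yields as by-products the relations $A*D\ls B*D$ and $B*C\ls B*D$ --- i.e., an independent, elementary proof of Corollary~\ref{cor:AC:BC}, which the paper only obtains later as the $n=1$ case of the much heavier Theorem~\ref{thm:sumclvls}. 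What the paper's computation buys is self-containedness (nothing beyond the definitions is used), and its concluding antisymmetrization is recycled almost verbatim to prove Theorem~\ref{thm:BC:AC+}. One small caution: your chaining of $x_{m+1}y_m\le x_my_{m+1}$ across indices to conclude $x_iy_j\ge x_jy_i$ for $i<j$ needs the paper's conventions about vanishing terms (no internal zeros, default ratio inequalities), but this is exactly the level of rigor the paper itself employs in the proofs of Theorems~\ref{thm:convo} and~\ref{thm:pwconvo}.
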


\begin{proof}\vsb
Let $A*D=(s_k)$ and $B*C=(t_k)$.  We also define
$$ L=\frac{s_{n+1}}{ s_{n}} \rmand R=\frac{t_{n}}{t_{n-1}}. $$
To prove Theorem \ref{thm:pwconvo}, it is sufficient to establish the inequality $L\le R$, since the inequality $s_{n-1}t_{n+1}\le s_nt_n$ will then follow by symmetry.

Toward that objective, we first observe that
\begin{eqnarray*}
\frac{\partial L}{\partial d_0} &=& \frac{a_{n+1}s_n-a_ns_{n+1}}{ s_n^2} \\
&=& \frac{1}{s_n^2}\biggl(\sum_{i=0}^nd_i(a_{n-i}a_{n+1}-a_{n+1-i}a_n)-a_0a_nd_{n+1}\biggr) \;\le\; 0.
\end{eqnarray*}
Therefore,
\begin{equation}\label{ineq:conj26:d0}
L\,\le\, \frac{s_{n+1}}{s_{n}} \bigg|_{d_0=0}   = \quad
\frac{ { \sum_{i=0}^{n}a_id_{n+1-i} }_{\vphantom{|}} }
{ { \sum_{i=0}^{n-1} {a_i d_{n-i}} }^{\vphantom{W}} }.
\end{equation}
For any $0\le k\le n$, we now define
$$ u_k\,=\,\frac{1}{b_{n-k}}\sum_{j=1}^k b_{n-j}d_j
\rmand
v_k\,=\,\frac{1}{b_{n-k}}\sum_{j=0}^k b_{n-j}d_{j+1} $$
and we define \vskip-18pt
\begin{equation}  \label{eq:g-def}
g_k \;=\; \frac{{\sum_{i=k+1}^{n}a_{n-i}d_{i+1} \;+\; v_ka_{n-k}}_{\vphantom{|}}  }
{{ \sum_{i=k+1}^na_{n-i}d_i \;+\; u_ka_{n-k}}^{\vphantom{|}} }.\hskip1cm
\end{equation}
Then Inequality~\eqref{ineq:conj26:d0} reads $L\le g_0$, and we pause here in the proof of Theorem \ref{thm:pwconvo} for the following lemma.

\smallskip
\begin{lemma}\label{lem:ineq2:b}
For any number $k$ such that $0\le k\le n-1$, we have $g_k\le g_{k+1}$, where $g_k$ is defined by Equation \eqref{eq:g-def}.
\end{lemma}

\begin{proof}\vsb
Let $0\le k\le n-1$. Denote the sum in the denominator of $g_k$ by $M$. Then
\begin{align*}
\frac{\partial g_k}{\partial a_{n-k}}
&\;=\; \frac{1}{M^2}
   \biggl(v_k\sum_{i=k+1}^na_{n-i}d_i-u_k\sum_{i=k+1}^{n}a_{n-i}d_{i+1}\biggr)\\
&\;=\;\frac{1}{M^2}
   \sum_{i=k+1}^{n}a_{n-i}(v_kd_i-u_kd_{i+1})\\
&\;\ge\;\frac{1}{M^2}
   \sum_{i=k+1}^{n} \sum_{j=1}^k \,
\frac{ {a_{n-i}b_{n-j}}_{\vphantom{w}} } { b_{n-k} } (d_id_{j+1}-d_{i+1}d_j),
\end{align*}
which is nonnegative, by the log-concavity of $D$.
Since $A\lesssim B$, we deduce that
\begin{eqnarray*}
g_k &\le&g_k \bigg|_{a_{n-k} = \frac {b_{n-k}a_{n-k-1}} {b_{n-k-1}}}\\
&=& \frac{\sum_{i=k+1}^{n}a_{n-i}d_{i+1} \;+\; v_k \frac{b_{n-k}}{b_{n-k-1}} a_{n-k-1}
}{
\sum_{i=k+1}^na_{n-i}d_i \;+\; u_k \frac{b_{n-k}}{ b_{n-k-1}} a_{n-k-1}}
~=~ g_{k+1}.
\end{eqnarray*}
This completes the proof of Lemma \ref{lem:ineq2:b}.
\end{proof}

From Inequality \eqref{ineq:conj26:d0} and Lemma~\ref{lem:ineq2:b}, we deduce that
\[
L\le g_0\le g_1\le\cdots\le g_n=\frac{v_n}{u_n}
~=~ \frac{\sum_{j=0}^{n}b_{j}d_{n-j+1} }{\sum_{j=1}^{n}b_{j-1}d_{n-j+1}}.
\]
Then, to prove that $L\le R$, it suffices to show
\begin{equation}\label{ineq:conj26:desired}
\sum_{i=0}^n b_ic_{n-i} \sum_{j=0}^{n}b_{j-1}d_{n-j+1}
~\ge~
\sum_{i=0}^n b_{i-1}c_{n-i} \sum_{j=0}^{n}b_jd_{n-j+1}.
\end{equation}
The difference between the sides of Inequality \eqref{ineq:conj26:desired} is
\begin{align*}
& \hskip-6pt \sum_{0\le i\le n}\;\sum_{0\le j\le n} c_{n-i}d_{n-j+1}(b_ib_{j-1}-b_{i-1}b_j)\\
=& \sum_{0\le i<j\le n} \hskip-6pt c_{n-i}d_{n-j+1}(b_ib_{j-1}-b_{i-1}b_j)
   \;+  \hskip-6pt \sum_{0\le j<i\le n} \hskip-6pt c_{n-i}d_{n-j+1}(b_ib_{j-1}-b_{i-1}b_j)\\
=& \sum_{0\le i<j\le n} \hskip-6pt c_{n-i}d_{n-j+1}(b_ib_{j-1}-b_{i-1}b_j)
   \;+  \hskip-6pt \sum_{0\le i<j\le n} \hskip-6pt c_{n-j}d_{n-i+1}(b_jb_{i-1}-b_{j-1}b_i)\\
=& \sum_{0\le i<j\le n}(b_ib_{j-1}-b_{i-1}b_j)(c_{n-i}d_{n-j+1}-c_{n-j}d_{n-i+1}).
\end{align*}
Since $B$ is log-concave and $0\le i<j\le n$, we have $b_ib_{j-1}-b_{i-1}b_j\ge0$.
On the other hand, since $C\lesssim D$, we derive that
\[
\frac{c_{n-i}}{c_{n-j}}
~= \prod_{k=n-j+1}^{n-i} \frac{c_{k}}{c_{k-1}}
~\ge~ \prod_{k=n-j+1}^{n-i} \frac{d_{k+1}}{d_{k}}
~=~ \frac{d_{n-i+1}}{ d_{n-j+1}}.
\]
This proves Inequality \eqref{ineq:conj26:desired}, and it thereby completes the proof of Theorem \ref{thm:pwconvo}.
\end{proof}

We caution that $A\lesssim B$ and $C\lesssim D$ do not together imply that $A+C\lesssim B+D$.
Next is an immediate corollary of Theorem~\ref{thm:pwconvo}.

\begin{cor}\label{cor:ls:syn}
Let $\A_n$ and $\B_n$ be two sequences of nonnegative sequences without internal zeros.  Suppose that $\A_n\in\ltLC$ and $\B_n\in\gtLC$, then the sequences $A_k*B_k$ are pairwise synchronized. In other words, we have $A_i*B_i\sim A_j*B_j$ for any $1\le i\le j\le n$.
\end{cor}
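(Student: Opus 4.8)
The plan is to derive this directly from Theorem~\ref{thm:pwconvo}, which is already tailored to produce synchronicity of \emph{crossed} convolutions. Since the synchronicity relation is symmetric, it suffices to treat the case $i<j$. The diagonal case $i=j$ is trivial: $A_i*B_i$ is log-concave, being a convolution of log-concave sequences without internal zeros, and every log-concave sequence is synchronized with itself (the two defining inequalities of $\sim$ both reduce to log-concavity when the two sequences coincide).

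First I would unpack the two membership hypotheses into ratio-dominance relations at the indices $i$ and $j$. From $\A_n\in\ltLC$ I read off $A_i\ls A_j$, and from $\B_n\in\gtLC$ I read off $B_i\gs B_j$, which is the same as $B_j\ls B_i$.

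The key step is then to feed these two relations into Theorem~\ref{thm:pwconvo} with the correct assignment of roles. Taking the theorem's four sequences $A,B,C,D$ to be $A_i,\,A_j,\,B_j,\,B_i$ respectively, the required hypotheses $A\ls B$ and $C\ls D$ become exactly $A_i\ls A_j$ and $B_j\ls B_i$, both of which are in hand. The theorem's conclusion $A*D\sim B*C$ then reads $A_i*B_i\sim A_j*B_j$, which is precisely the assertion. No separate verification of log-concavity or of the synchronicity inequalities is needed, as all of that is packaged inside Theorem~\ref{thm:pwconvo}.

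The only point requiring care — and the sole ``obstacle,'' such as it is — is this bookkeeping of roles. Theorem~\ref{thm:pwconvo} synchronizes $A*D$ with $B*C$ rather than $A*C$ with $B*D$, so one must pair the ratio-dominant $A$-sequence with the ratio-dominated $B$-sequence (and conversely) in order for the crossed convolutions to collapse to the two diagonal convolutions $A_i*B_i$ and $A_j*B_j$ that we actually care about. Once the roles are assigned as above, the corollary follows with no further computation.
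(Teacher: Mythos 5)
Your proof is correct and matches the paper's approach exactly: the paper presents this as an immediate consequence of Theorem~\ref{thm:pwconvo}, and your role assignment $(A,B,C,D)=(A_i,A_j,B_j,B_i)$ is precisely the bookkeeping that makes it immediate. The handling of the diagonal case $i=j$ via log-concavity of the convolution is a harmless extra detail the paper leaves implicit.
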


For any sequences $\A_n$ and $\B_m$ of nonnegative sequences,
we write $\A_n\ls\B_m$ if $A_i\ls B_j$ for any $1\le i\le n$ and $1\le j\le m$.

\begin{thm}\label{thm:sumls}
Let $\A_n$ and $\B_m$ be two sequences of nonnegative sequences.  If \hbox{$\A_n\ls\B_m$}, then we have
$$ \sum_{i=1}^n u_iA_i  \,\ls\,  \sum_{j=1}^m v_jB_j $$
for any $u_1,u_2,\ldots,u_n,v_1,v_2,\ldots,v_m\ge0$.
\end{thm}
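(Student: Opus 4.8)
The plan is to verify the two-sided inequality \eqref{def:RD} for the two combined sequences directly, recognizing each half as an instance of the mediant inequality of Lemma~\ref{lem:sumpreserve}. Write $P=\sum_{i=1}^n u_iA_i$ and $Q=\sum_{j=1}^m v_jB_j$, with entries $p_k=\sum_{i=1}^n u_ia_{i,k}$ and $q_k=\sum_{j=1}^m v_jb_{j,k}$, and set $\pi_k=p_k/p_{k-1}$ and $\rho_k=q_k/q_{k-1}$. By the characterization \eqref{def:RD}, proving $P\ls Q$ is exactly proving that $\rho_{k+1}\le\pi_k\le\rho_k$ for all $k$. I would stress at the outset that these two inequalities already subsume the synchronicity $P\sim Q$ (which follows formally from \eqref{def:RD}), so there is no need to invoke Theorem~\ref{thm:lincomb} separately; the entire statement collapses to the two scalar inequalities just displayed.

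For the upper bound $\pi_k\le\rho_k$, I would rewrite it as
\[
\frac{\sum_{i=1}^n u_ia_{i,k}}{\sum_{i=1}^n u_ia_{i,k-1}}\,\le\,\frac{\sum_{j=1}^m v_jb_{j,k}}{\sum_{j=1}^m v_jb_{j,k-1}}
\]
and apply Lemma~\ref{lem:sumpreserve} with numerator/denominator pairs $(u_ia_{i,k},\,u_ia_{i,k-1})$ on the left and $(v_jb_{j,k},\,v_jb_{j,k-1})$ on the right. The termwise hypothesis that the lemma demands is $\frac{u_ia_{i,k}}{u_ia_{i,k-1}}\le\frac{v_jb_{j,k}}{v_jb_{j,k-1}}$, which after cancelling the scalars is simply $\alpha_{i,k}\le\beta_{j,k}$ --- precisely the right half of the relation $A_i\ls B_j$ read from \eqref{def:RD}. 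Since the hypothesis $\A_n\ls\B_m$ furnishes $A_i\ls B_j$ for every pair $(i,j)$, this termwise inequality holds across all $i,j$, and Lemma~\ref{lem:sumpreserve} yields $\pi_k\le\rho_k$.

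For the lower bound $\rho_{k+1}\le\pi_k$ I would run the identical argument with the two sides interchanged, writing
\[
\frac{\sum_{j=1}^m v_jb_{j,k+1}}{\sum_{j=1}^m v_jb_{j,k}}\,\le\,\frac{\sum_{i=1}^n u_ia_{i,k}}{\sum_{i=1}^n u_ia_{i,k-1}}
\]
and invoking Lemma~\ref{lem:sumpreserve} with the pairs $(v_jb_{j,k+1},\,v_jb_{j,k})$ and $(u_ia_{i,k},\,u_ia_{i,k-1})$. Here the termwise hypothesis becomes $\beta_{j,k+1}\le\alpha_{i,k}$, which is the left half of $A_i\ls B_j$ in \eqref{def:RD}, again available for all $i,j$. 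This gives $\rho_{k+1}\le\pi_k$, and combined with the previous step it establishes \eqref{def:RD} for the pair $(P,Q)$, hence $P\ls Q$.

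The only place calling for care --- and essentially the sole obstacle --- is the bookkeeping for degenerate terms: some scalars $u_i,v_j$ may vanish, and individual entries $a_{i,k},b_{j,k}$ may be zero, so that a denominator in the termwise hypothesis collapses. These cases are absorbed by the default conventions adopted just after the convolution definition (whereby $\frac{a}{b}\le\frac{c}{d}$ holds by fiat when $a=b=0$, $c=d=0$, or $b=d=0$), which are exactly what keep both the hypotheses and the conclusion of Lemma~\ref{lem:sumpreserve} meaningful when denominators vanish; terms carrying a zero scalar simply contribute nothing to the relevant sums. Once this is dispatched, no genuine estimation remains, as the two-sided shape of \eqref{def:RD} feeds term by term into the mediant inequality of Lemma~\ref{lem:sumpreserve}.
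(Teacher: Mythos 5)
Your proof is correct and is essentially identical to the paper's: the paper likewise applies Lemma~\ref{lem:sumpreserve} termwise (using the two halves of $A_i\ls B_j$ from \eqref{def:RD}) to obtain $\rho_{k+1}\le\pi_k\le\rho_k$ for the combined sequences, and then concludes by Definition~\eqref{def:RD}. Your extra remarks on the degenerate zero cases and on \eqref{def:RD} subsuming synchronicity are consistent with the paper's conventions.
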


\begin{proof} \vsb
Let $u_1,u_2,\ldots,u_n,v_1,v_2,\ldots,v_m\ge0$.  By Lemma~\ref{lem:sumpreserve}, we have
\begin{equation}
{\sum_{j=1}^mv_jb_{j,k+1}\over\sum_{j=1}^mv_jb_{j,k}}
\,\le\,
{\sum_{i=1}^nu_ia_{i,k}\over\sum_{i=1}^nu_ia_{i,k-1}}
\,\le\,
{\sum_{j=1}^mv_jb_{j,k}\over\sum_{j=1}^mv_jb_{j,k-1}}
\fa{k}.
\end{equation}
This proves the desired ratio-dominance relation, by Definition~(\ref{def:RD}).
\end{proof}

Theorem \ref{thm:sumls} will be used to prove the ratio-dominance relation between sums of sequences. The inverse statement does not hold, unless there are some synchronicity relations between the sequences.  We give the simplest case as follows, which can be proved directly from the definitions.

\begin{thm}\label{thm:simls:ls}
If $A\sim B$ and $A\ls A+B$, then $A\ls B$.
\end{thm}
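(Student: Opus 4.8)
The plan is to work entirely with the ratio characterization of ratio-dominance in Definition~\eqref{def:RD}: to conclude $A\ls B$ it suffices to verify the two-sided inequality $\beta_{k+1}\le\alpha_k\le\beta_k$ for every $k$. I would establish the upper inequality $\alpha_k\le\beta_k$ from the hypothesis $A\ls A+B$, and the lower inequality $\beta_{k+1}\le\alpha_k$ from the hypothesis $A\sim B$ alone. Writing $C=A+B$ with successive ratios $\gamma_k=(a_k+b_k)/(a_{k-1}+b_{k-1})$, the hypothesis $A\ls C$ gives, via~\eqref{def:RD}, both $\gamma_{k+1}\le\alpha_k$ and $\alpha_k\le\gamma_k$; only the second of these will be needed.

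The one computation in the argument is the translation of $\alpha_k\le\gamma_k$ into a statement about $\beta$. Since $\gamma_k$ is the mediant of $\alpha_k$ and $\beta_k$, I expect $\alpha_k\le\gamma_k$ to be equivalent to $\alpha_k\le\beta_k$. Concretely, clearing denominators in $\frac{a_k}{a_{k-1}}\le\frac{a_k+b_k}{a_{k-1}+b_{k-1}}$ and cancelling the common term $a_ka_{k-1}$ reduces it to $a_kb_{k-1}\le a_{k-1}b_k$, which is precisely $\alpha_k\le\beta_k$. Boundary cases in which a denominator vanishes are covered by the paper's default convention for inequalities of the form $\frac{a}{b}\le\frac{c}{d}$, and internal zeros are excluded by standing assumption, so the cross-multiplication is legitimate throughout.

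For the lower inequality I would simply unpack synchronicity. By the synchronicity rule~\eqref{def:syn}, $A\sim B$ means $\{\alpha_k,\beta_k\}\ge\{\alpha_{k+1},\beta_{k+1}\}$, so every element of the right-hand set is at most every element of the left; in particular $\beta_{k+1}\le\alpha_k$, with no further work. Combining this with $\alpha_k\le\beta_k$ from the previous step yields $\beta_{k+1}\le\alpha_k\le\beta_k$ for all $k$, which is exactly $A\ls B$ by~\eqref{def:RD}.

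I do not anticipate a genuine obstacle; the result is immediate from the definitions, as the statement advertises. If there is any subtlety worth flagging, it is the observation that the hypothesis $A\ls A+B$ contributes only its upper half $\alpha_k\le\gamma_k$, whereas the entire lower bound $\beta_{k+1}\le\alpha_k$ comes for free from synchronicity, so that the remaining half $\gamma_{k+1}\le\alpha_k$ is never used. The only point to double-check is that no positivity assumption beyond the standing nonnegativity is silently invoked, but the mediant cross-multiplication is monotone under the nonnegativity already in force.
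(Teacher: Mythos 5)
Your proof is correct, and it is exactly the argument the paper intends: the paper states this result without a written proof, remarking only that it ``can be proved directly from the definitions,'' which is precisely your verification that $A\sim B$ supplies $\beta_{k+1}\le\alpha_k$ while the upper half of $A\ls A+B$ cross-multiplies (after cancelling $a_ka_{k-1}$) to $\alpha_k\le\beta_k$. Your observation that the lower half $\gamma_{k+1}\le\alpha_k$ of the hypothesis is never needed is a nice touch, and the zero-denominator cases are indeed covered by the paper's stated convention, since the primary definition of $\ls$ is the product inequality $a_{k+1}b_k\le a_kb_{k+1}$ rather than the ratio form.
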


We mention that the ratio-dominance relation $A\ls B$ or $A\gs B$ implies the synchronicity $A\sim B$, which, in turn, implies the log-concavity of $A$ and of $B$.  This leads to the following two sufficiency criteria for the log-concavities of sequences, which are obtained respectively from Theorem~\ref{thm:lincomb} and Corollary~\ref{cor:ls:syn}.

\begin{thm}
Suppose that $\A_n\in\tLC$. For any $u_1,u_2,\ldots,u_n\ge0$,
the sequence $\sum_{i=1}^n u_iA_i$ is log-concave.
\end{thm}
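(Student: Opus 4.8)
The plan is to obtain this statement as an immediate consequence of Theorem~\ref{thm:lincomb}, exactly as the sentence preceding it promises. The crucial observation is that the synchronicity relation is, by the definition given in \S\ref{sec:synch-def}, only asserted between sequences that are log-concave: to write $X\sim Y$ is already to declare that both $X$ and $Y$ are log-concave. Consequently, any instance of a self-relation $X\sim X$ carries within it the conclusion that $X$ is log-concave, and this is precisely what I would exploit.

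Concretely, I would invoke Theorem~\ref{thm:lincomb} with the two coefficient vectors taken to coincide. Given the hypothesis $\A_n\in\tLC$, I apply that theorem with $v_i=u_i$ for every $1\le i\le n$; since the $u_i,v_i$ range over arbitrary nonnegative reals, this choice is permitted. The theorem then yields
\[
\sum_{i=1}^n u_iA_i\ \sim\ \sum_{i=1}^n u_iA_i.
\]
Reading this relation through the definition of synchronicity, which requires each of its two arguments to be log-concave, and specializing to the common argument $X=\sum_{i=1}^n u_iA_i$, I conclude that $\sum_{i=1}^n u_iA_i$ is log-concave, as desired.

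There is essentially no obstacle to surmount here; the entire content has been front-loaded into Theorem~\ref{thm:lincomb}, and the only fine point is the legitimacy of applying that theorem with both coefficient sequences equal, followed by the routine recollection that synchronicity was defined so as to entail log-concavity. One could equally well set all $v_i=0$, observe that the zero sequence is trivially log-concave and synchronized with every log-concave sequence, and read off the log-concavity of $\sum_{i=1}^n u_iA_i$ from that relation instead; but taking the two vectors to be identical is the cleaner route and makes the dependence on Theorem~\ref{thm:lincomb} transparent.
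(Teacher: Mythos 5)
Your proof is correct and takes essentially the same route as the paper: the paper gives no separate argument for this theorem, stating only that it is ``obtained from Theorem~\ref{thm:lincomb}'' together with the preceding remark that synchronicity entails log-concavity of both sequences, which is exactly your specialization $v_i=u_i$ followed by reading off log-concavity from the self-relation. Nothing is missing.
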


\begin{thm}
Suppose that $\A_n\in\ltLC$ and $\B_n\in\gtLC$. For any $u_1,u_2,\ldots,u_n\ge0$,
the sequence $\sum_{i=1}^n u_iA_i*B_i$ is log-concave.
\end{thm}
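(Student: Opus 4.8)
The plan is to recognize this theorem as a direct consequence of the two results the authors flag immediately before it, namely Corollary~\ref{cor:ls:syn} and Theorem~\ref{thm:lincomb}, and thereby to reduce the asserted log-concavity to a statement about a single collection of pairwise synchronized sequences. All of the genuine analytic work---controlling the ratios of successive coefficients of a convolution---has already been discharged in Theorem~\ref{thm:pwconvo} and packaged by Corollary~\ref{cor:ls:syn}, so at this stage I expect only a short chaining argument.

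First I would set $C_i = A_i * B_i$ for $1 \le i \le n$. The hypotheses $\A_n \in \ltLC$ and $\B_n \in \gtLC$ are precisely those of Corollary~\ref{cor:ls:syn} (bearing in mind the paper's standing assumption that every sequence is nonnegative and free of internal zeros), so that corollary yields $C_i \sim C_j$ for all $1 \le i < j \le n$; equivalently, the collection $(C_i)_{i=1}^n$ lies in $\tLC$.

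Next I would invoke Theorem~\ref{thm:lincomb} for this collection $(C_i)_{i=1}^n \in \tLC$, taking the two coefficient vectors to coincide, i.e.\ $v_i = u_i$. This gives $\sum_{i=1}^n u_i C_i \sim \sum_{i=1}^n u_i C_i$. Since the synchronicity relation $A \sim B$ forces both $A$ and $B$ to be log-concave, a sequence synchronized with itself is log-concave; hence $\sum_{i=1}^n u_i C_i = \sum_{i=1}^n u_i (A_i * B_i)$ is log-concave, as required. One could equally cite the preceding (unlabeled) criterion directly, applied to the collection $(C_i)$, since that criterion is itself the specialization of Theorem~\ref{thm:lincomb} to equal coefficient vectors.

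Because the substantive estimates are inherited from the convolution results, I do not anticipate any real obstacle here; the only points demanding care are bookkeeping ones---confirming that the internal-zero hypothesis needed by Corollary~\ref{cor:ls:syn} is in force under the paper's global conventions, and noting that Theorem~\ref{thm:lincomb} with $v_i = u_i$ does deliver log-concavity, via the equivalence between a sequence being synchronized with itself and its being log-concave, rather than merely the synchronicity of two distinct combinations.
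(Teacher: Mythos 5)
Your proposal is correct and follows exactly the route the paper intends: it states that this criterion is obtained from Corollary~3.8 (ls:syn) combined with Theorem~2.4 (lincomb), i.e., the convolutions $A_i*B_i$ are pairwise synchronized and hence any nonnegative linear combination of them is log-concave. Your additional bookkeeping (equal coefficient vectors, and synchronicity of a sequence with itself forcing log-concavity) is sound and matches the paper's implicit argument.
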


We remark that Theorem~\ref{thm:lincomb},
Theorem~\ref{thm:pwconvo}, Corollary~\ref{cor:ls:syn} and Theorem~\ref{thm:sumls} are not only interesting in their own rights, but also potentially useful toward the conjectures in \S\ref{sec:concl} regarding log-concavity of partial genus distributions of graphs.

\medskip
\subsection{Lexicographic bivariate functions} \label{sec:lex}  

For every $1\le i\le n$, let $F_i=(f_{i,k})_k$ be a nonnegative sequence.  We distinguish the sequence $F_i$ from the sequence $A_i$ by allowing $f_{i,k}=+\infty$.  More precisely, we suppose that $0\le f_{i,k}\le+\infty$ for any~$i$ and~$k$.  Regarding $f_{i,k}$ as bivariate functions, we say that the finite sequence $(F_i)_{i=1}^n$ of sequences is \mdef{lexicographic} if
$$  f_{i,k}\le f_{j,h}\quad\Longleftrightarrow\quad
\text{either $k=h$ and $i\le j$, or $k<h$}.
$$
Equivalently, the lexicographicity of the bivariate function $f_{i,k}$ can be defined by
\begin{equation}\label{def:lx}
f_{1,k}\,\le\, f_{2,k}\,\le\, \cdots\,\le\, f_{n,k}\,\le\, f_{1,k+1}  \fa{k}.
\end{equation}
That is, the entries in each column of a representation of \DWc{$(f_{i,k})$} as an array are
in non-decreasing order, and they are less than or equal to every entry in subsequent columns.

\begin{thm}\label{thm:sumclvls}
Let $\A_n,\B_n,\W_n$ be three sequences of nonnegative sequences without internal zeros.  Suppose that \hbox{$\W_n\in \ltLC$}, and the bivariate functions $b_{i,t}/a_{i,t}$ and $a_{i,t-1}/b_{i,t}$ are both lexicographic.  Then we have
\begin{equation}\label{eq0}
\sum_{i=1}^nW_i*A_i ~\ls~ \sum_{i=1}^nW_i*B_i.
\end{equation}
In other words, Inequality (\ref{eq0}) holds true for any $\W_n\in \ltLC$ if
\begin{equation}\label{lex:b/a}
\frac{b_{1,t}}{a_{1,t}} \,\le\, \frac{b_{2,t}}{a_{2,t}} \,\le\, \cdots \, \le\, \frac{b_{n,t}}{a_{n,t}}\,\le\, \frac{b_{1,t+1}}{a_{1,t+1}}
\end{equation} \vsa
and \vsa\vsb
\begin{equation}\label{lex:a/b}
 \frac{a_{1,t-1}}{ b_{1,t}} \,\le\, \frac{a_{2,t-1}}{ b_{2,t}} \,\le\, \cdots \,\le\,
\frac{a_{n,t-1}}{ b_{n,t}} \,\le\, \frac{a_{1,t}}{ b_{1,t+1}}
\end{equation}
for all $t$.
\end{thm}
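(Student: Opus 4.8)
The plan is to verify the ratio-dominance relation \eqref{eq0} directly from its definition \eqref{def:RD}. Writing $S=\sum_{i=1}^n W_i*A_i$ with successive-ratio sequence $(\sigma_k)$ and $T=\sum_{i=1}^n W_i*B_i$ with successive-ratio sequence $(\tau_k)$, it suffices to prove the two families of inequalities $\sigma_k\le\tau_k$ and $\tau_{k+1}\le\sigma_k$ for all $k$; by \eqref{def:RD} these together yield $S\ls T$ (and in particular recover synchronicity and log-concavity of both sides). Expanding the convolutions, I record $s_k=\sum_i\sum_\ell w_{i,\ell}a_{i,k-\ell}$ and $t_k=\sum_i\sum_\ell w_{i,\ell}b_{i,k-\ell}$, so each inequality becomes the claim that a certain quadruple sum over the sequence-indices $i,j$ and the convolution-indices $\ell,m$ is nonnegative. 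A termwise application of Lemma~\ref{lem:sumpreserve} is hopeless here, since the naive bound would force $\alpha_{i,k-\ell}\le\beta_{j,k-m}$ for all index choices, which is far too strong; the $W$-weights must be exploited through a symmetrization.

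For the first inequality $\sigma_k\le\tau_k$, equivalently $s_{k-1}t_k-s_kt_{k-1}\ge0$, I would expand and reindex to the form
\[
s_{k-1}t_k-s_kt_{k-1}=\sum_{i,j}\sum_{\ell,m}a_{i,k-\ell}b_{j,k-m}\bigl(w_{i,\ell-1}w_{j,m}-w_{i,\ell}w_{j,m-1}\bigr),
\]
and then pair the term indexed by $(i,\ell,j,m)$ with the one indexed by its transpose $(j,m,i,\ell)$. The pooled contribution factors as the product of the $W$-difference $w_{i,\ell-1}w_{j,m}-w_{i,\ell}w_{j,m-1}$ with the cross-difference $a_{i,k-\ell}b_{j,k-m}-a_{j,k-m}b_{i,k-\ell}$. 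The first factor is positive exactly when $\omega_{j,m}\ge\omega_{i,\ell}$, and by Theorem~\ref{thm:ls}(iii) the hypothesis $\W_n\in\ltLC$ totally orders the ratios $\omega_{i,k}$ so that this occurs precisely when $m<\ell$, or $m=\ell$ and $j\ge i$. The second factor is positive exactly when $b_{j,k-m}/a_{j,k-m}\ge b_{i,k-\ell}/a_{i,k-\ell}$, and the lexicographic hypothesis \eqref{lex:b/a} on $b_{i,t}/a_{i,t}$ says this holds under the identical index condition. Hence the two signs always coincide, every pooled term is nonnegative, and the inequality follows (the diagonal $i=j,\ \ell=m$ contributes zero).

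For the second inequality $\tau_{k+1}\le\sigma_k$, equivalently $s_kt_k-s_{k-1}t_{k+1}\ge0$, the same strategy applies but with a shifted involution. After expanding and reindexing I reach
\[
s_kt_k-s_{k-1}t_{k+1}=\sum_{i,j}\sum_{\ell,m}a_{i,k-\ell}b_{j,k-m}\bigl(w_{i,\ell}w_{j,m}-w_{i,\ell-1}w_{j,m+1}\bigr),
\]
whose $W$-difference is antisymmetric not under the transposition but under the shifted involution sending $(i,\ell,j,m)$ to $(j,m+1,i,\ell-1)$. Pairing accordingly, the pooled contribution is the product of $w_{i,\ell}w_{j,m}-w_{i,\ell-1}w_{j,m+1}$ with the shifted cross-difference $a_{i,k-\ell}b_{j,k-m}-a_{j,k-m-1}b_{i,k-\ell+1}$. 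The $W$-factor is positive iff $\omega_{i,\ell}\ge\omega_{j,m+1}$, which by Theorem~\ref{thm:ls}(iii) means $\ell\le m$, or $\ell=m+1$ and $i\ge j$; rewriting the cross-difference through $a_{i,t-1}/b_{i,t}$ shows it is positive under exactly the same index condition, this time by the second lexicographic hypothesis \eqref{lex:a/b}. Again the signs match termwise and the fixed points ($i=j,\ \ell=m+1$) contribute zero.

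The main obstacle is locating the correct involution for the second inequality: the asymmetric shift between the indices $t-1$ and $t$ in the cross-ratio $a_{i,t-1}/b_{i,t}$ is mirrored exactly by the $+1$ shift $(i,\ell,j,m)\mapsto(j,m+1,i,\ell-1)$ needed to make the $W$-difference antisymmetric, and the whole argument turns on recognizing that the columnwise total order on the ratios $\omega_{i,k}$ supplied by Theorem~\ref{thm:ls} coincides index-for-index with the lexicographic orders \eqref{lex:b/a} and \eqref{lex:a/b}. The remaining care is the bookkeeping of vanishing entries that legitimizes the divisions expressing each sign condition, which is absorbed by the paper's default convention for inequalities $\frac{a}{b}\le\frac{c}{d}$ with zero denominators together with the no-internal-zeros hypothesis.
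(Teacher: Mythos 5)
Your proof is correct, and at its core it is the paper's own argument: reduce \eqref{eq0} via \eqref{def:RD} to two product-form inequalities, expand both convolutions into a quadruple sum, symmetrize by an involution so that each orbit factors as the product of a $W$-difference and a cross-difference of $A$'s and $B$'s, and check that $\W_n\in\ltLC$ (through Theorem~\ref{thm:ls}(iii)) and the two lexicographic hypotheses force those two factors to carry the same sign. The differences are organizational, and mostly to your credit: your single involution $(i,\ell,j,m)\mapsto(j,m,i,\ell)$, together with its shifted companion for the second inequality, treats the diagonal terms $i=j$ and the off-diagonal terms uniformly, whereas the paper handles $i=j$ separately (log-concavity of $W_i$ combined with $A_i\ls B_i$) and symmetrizes only the $i\ne j$ part; and you match signs directly against the lexicographic total order, whereas the paper first extracts intermediate sufficient conditions \eqref{eq51} and \eqref{eq61} and then spends the second half of its proof showing, through \eqref{eq7}--\eqref{eq81}, that those conditions are equivalent to the two lexicographic hypotheses. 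You also write out both inequalities in full, while the paper settles one of them with ``along the same line.'' What the paper's longer route buys is precisely those recorded equivalences: they are invoked again in the proof of Theorem~\ref{thm:LX}, whereas your argument delivers only the sufficiency that the present theorem asserts. Your closing caveat about vanishing entries is fair and puts you on the same footing as the paper, which likewise leans on the default convention for ratios with zero denominators and on the no-internal-zeros hypothesis at exactly these points.
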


\begin{proof} \vsb
By Definition~(\ref{def:RD}), Relation~(\ref{eq0}) is equivalent to
\begin{equation}\label{ineq0}
\frac{\sum_{i=1}^n\sum_{s}w_{i,s}b_{i,k+1-s}}
	{\sum_{j=1}^n\sum_{t}w_{j,t}b_{j,k-t}} \,\le\,
\frac{\sum_{i=1}^n\sum_{s}w_{i,s}a_{i,k-s}}
	{\sum_{j=1}^n\sum_{t}w_{j,t}a_{j,k-1-t}} \,\le\,
\frac{\sum_{i=1}^n\sum_{s}w_{i,s}b_{i,k-s}}
	{\sum_{j=1}^n\sum_{t}w_{j,t}b_{j,k-1-t}}.
\end{equation}
We shall now deal with these two  respective inequalities.

The first inequality in~(\ref{ineq0}) is equivalent to the inequality
\begin{equation}\label{eq1}
\sum_{1 \,\le\,  i,j \,\le\,  n}\sum_{s,t}
w_{i,s}w_{j,t-1}(a_{j,k-t}b_{i,k+1-s}-a_{i,k-s}b_{j,k+1-t}) ~\le~ 0.
\end{equation}
When $i=j$, the inner summation in~(\ref{eq1}) becomes
\begin{align*}
\sum_{s,t} & w_{i,s}w_{i,t-1} (a_{i,k-t}b_{i,k+1-s}-a_{i,k-s}b_{i,k+1-t})\\
& =~ \sum_{s<t} (w_{i,s}w_{i,t-1}-w_{i,t}w_{i,s-1})(a_{i,k-t}b_{i,k+1-s}-a_{i,k-s}b_{i,k+1-t}).
\end{align*}
Let $s<t$. Since $W_i$ is log-concave, we have
$$ w_{i,s}w_{i,t-1}-w_{i,t}w_{i,s-1}
\,\ge\,0. $$
On the other hand, we have ${a_{i,k}\over a_{i,k-1}}\le {b_{i,k}\over b_{i,k-1}}$ by~(\ref{lex:b/a}), and we have ${b_{i,k+1}\over b_{i,k}}\le {a_{i,k}\over a_{i,k-1}}$. So $A_i\ls B_i$. We infer that
$$ a_{i,k-t}b_{i,k+1-s}-a_{i,k-s}b_{i,k+1-t} \,\le\, 0. $$
Therefore, the $i=j$ part of the summand in~(\ref{eq1}) is nonpositive. For the $i\ne j$ part, we can make the transformation
{\allowdisplaybreaks
\begin{align*}
&\sum_{1\le i\ne j\le n}\!\sum_{s,t}
	w_{i,s}w_{j,t-1}(a_{j,k-t}b_{i,k+1-s}-a_{i,k-s}b_{j,k+1-t}) \\
=& \sum_{1\le i\ne j\le n}\!\sum_{s,t}
	w_{i,k+1-s}w_{j,k-t}(a_{j,t-1}b_{i,s}-a_{i,s-1}b_{j,t}) \\
=&\sum_{1\le i<j\le n}\!\sum_{s,t}
	\Bigl[w_{i,k+1-s}w_{j,k-t}(a_{j,t-1}b_{i,s}-a_{i,s-1}b_{j,t})
	+w_{j,k+1-s}w_{i,k-t}(a_{i,t-1}b_{j,s}-a_{j,s-1}b_{i,t})\Bigr]\\
=&\sum_{1\le i<j\le n}\!\sum_{s,t}
	\Bigl[w_{i,k+1-s}w_{j,k-t}(a_{j,t-1}b_{i,s}-a_{i,s-1}b_{j,t})
	+w_{j,k+1-t}w_{i,k-s}(a_{i,s-1}b_{j,t}-a_{j,t-1}b_{i,s})\Bigr]\\
=&\sum_{1\le i<j\le n}\!\sum_{s,t}
	(w_{i,k+1-s}w_{j,k-t}-w_{i,k-s}w_{j,k+1-t})(a_{j,t-1}b_{i,s}-a_{i,s-1}b_{j,t}).
\end{align*}
}
Let $i<j$. Then $W_i\ls W_j$, which implies that
$$w_{i,k+1-s}w_{j,k-t}-w_{j,k+1-t}w_{i,k-s}$$
is nonpositive (resp., nonnegative) if $s\le t$ (resp., $s>t$). Therefore, (\ref{eq1}) holds true if $a_{j,t-1}b_{i,s}-a_{i,s-1}b_{j,t}$ is nonnegative (resp., nonpositive) for all $i<j$, when $s\le t$ (resp., $s>t$).   Since $A_j\sim B_j$, the function $b_{j,t}/a_{j,t-1}$ in $t$ is non-increasing.  Therefore, Condition \eqref{ineq0} is equivalent to the iterated inequality
\begin{equation}\label{eq51}
{b_{i,t+1}\over a_{i,t}}\le{b_{j,t}\over a_{j,t-1}}\le{b_{i,t}\over a_{i,t-1}}
\fa{1\le i<j\le n}.
\end{equation}

Along the same line, the second inequality in~(\ref{ineq0}) holds if
\begin{equation}\label{eq61}
{b_{i,t}\over a_{i,t}} \,\le\, {b_{j,t}\over a_{j,t}} \,\le\, {b_{i,t+1}\over a_{i,t+1}}
\fa{1\le i<j\le n}.
\end{equation}
Combining~(\ref{eq51}) and~(\ref{eq61}), we find that they can be recast as
\begin{equation}\label{eq7}
{b_{i,t}\over b_{j,t}}
\,\le\,
{a_{i,t}\over a_{j,t}}
\,\le\,
{b_{i,t+1}\over b_{j,t+1}}
\fa{1\le i<j\le n}
\end{equation}
and
\begin{equation}\label{eq8}
{b_{i,t+1}\over b_{j,t}}
\,\le\,
{a_{i,t}\over a_{j,t-1}}
\,\le\,
{b_{i,t}\over b_{j,t-1}}
\fa{1\le i<j\le n}.
\end{equation}

Now we are going to further reduce the above inequality system. Note that~(\ref{eq7}) holds for all $i<j$ if and only if it holds for all $j=i+1$, that is,
\begin{equation}\label{eq71}
{b_{i,t}\over b_{i+1,t}}
\,\le\,
{a_{i,t}\over a_{i+1,t}}
\,\le\,
{b_{i,t+1}\over b_{i+1,t+1}}
\fa{1\le i\le n-1},
\end{equation}
because one may get~(\ref{eq7}) by multiplying~(\ref{eq71}) by
the inequalities obtained from itself by substituting~$i$ to $i+1,i+2,\ldots,j-1$.
On the other hand,
the first inequality in~(\ref{eq8}) can be rewritten as
\begin{equation}\label{eq8-1}
\frac{a_{j,t-1}}{ b_{j,t}} \,\le\, \frac{a_{i,t}}{ b_{i,t+1}}.
\end{equation}
By the second inequality in~(\ref{eq71}), the function $a_{i,t}/b_{i,t+1}$
in $i$ is non-decreasing.
So~(\ref{eq8-1}) holds for all $i<j$ if and only if it holds for $i=1$ and $j=n$,
that is,
\begin{equation}\label{eq8-11}
\frac{a_{n,t-1}}{ b_{n,t}} \,\le\, \frac{a_{1,t}}{ b_{1,t+1}}.
\end{equation}
Similarly, the second inequality in~(\ref{eq8})
is equivalent to
\begin{equation}\label{eq8-21}
\frac{b_{n,t-1}}{ a_{n,t-1}} \,\le\, \frac{b_{1,t}}{ a_{1,t}}.
\end{equation}
The inequalities~(\ref{eq8-11}) and~(\ref{eq8-21}) can be written together as
\begin{equation}\label{eq81}
\frac{b_{1,t+1}}{ b_{n,t}} \,\le\,  \frac{a_{1,t}}{ a_{n,t-1}} \,\le\, \frac{b_{1,t}}{ b_{n,t-1}}
\fa{t}.
\end{equation}

Finally, it is clear that the combination of (\ref{eq71}) and (\ref{eq81}) is equivalent to the two lexicographical conditions.
\end{proof}

Using $n=2$, Theorem~\ref{thm:sumclvls} will be applied in the next section toward proving the log-concavity of the genus distributions of some families of graphs. When $n=1$, Theorem~\ref{thm:sumclvls} reduces to the following extension of Theorem~\ref{thm:convo}, i.e., the convolution transformation induced from the same log-concave sequence preserves ratio-dominance.

\begin{cor}\label{cor:AC:BC}
Let $A,B,C$ be log-concave nonnegative sequences without internal zeros.
If $A\ls B$, then $A*C\ls B*C$.
\end{cor}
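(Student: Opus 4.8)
The plan is to obtain Corollary~\ref{cor:AC:BC} as the degenerate case $n=1$ of Theorem~\ref{thm:sumclvls}, exactly as the surrounding text anticipates. First I would set $\W_1=(C)$, $\A_1=(A)$, and $\B_1=(B)$, three one-term collections of log-concave nonnegative sequences without internal zeros, so that the standing hypotheses of the theorem on $\A_n,\B_n,\W_n$ are met. With these choices the conclusion $\sum_{i=1}^{1}W_i*A_i \ls \sum_{i=1}^{1}W_i*B_i$ of Theorem~\ref{thm:sumclvls} reads $C*A\ls C*B$, which is the desired $A*C\ls B*C$ once we invoke the commutativity of convolution (immediate from the definition of $A*B$ as the coefficient sequence of a product of polynomials).

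The substance of the reduction is to check that the remaining hypotheses of Theorem~\ref{thm:sumclvls} collapse to precisely the assumption $A\ls B$ together with the log-concavity of $C$. The requirement $\W_1\in\ltLC$ is vacuous when $n=1$, there being no pair $i<j$, while the log-concavity of $W_1=C$ that the theorem's proof calls upon is supplied directly by the hypothesis of the corollary. It then remains to unwind the two lexicographicity conditions. For a single sequence, condition~(\ref{lex:b/a}) collapses to ``$b_{t}/a_{t}$ is non-decreasing in $t$,'' and condition~(\ref{lex:a/b}) to ``$a_{t-1}/b_{t}$ is non-decreasing in $t$.''

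Next I would verify that these two monotonicities are jointly equivalent to $A\ls B$ in the sense of~(\ref{def:RD}). Cross-multiplying the first, $b_{t}/a_{t}\le b_{t+1}/a_{t+1}$ becomes $a_{t+1}b_{t}\le a_{t}b_{t+1}$, that is $\alpha_{t+1}\le\beta_{t+1}$; ranging over all $t$ this is exactly the inequality $\alpha_k\le\beta_k$ of~(\ref{def:RD}). Cross-multiplying the second, $a_{t-1}/b_{t}\le a_{t}/b_{t+1}$ becomes $a_{t-1}b_{t+1}\le a_{t}b_{t}$, that is $\beta_{t+1}\le\alpha_{t}$, which is the remaining inequality $\beta_{k+1}\le\alpha_k$ of~(\ref{def:RD}). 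Conversely, $A\ls B$ returns both monotonicities. Hence all hypotheses of Theorem~\ref{thm:sumclvls} are satisfied, and the corollary follows.

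The only place demanding any care --- and it is the ``main obstacle,'' such as it is --- is bookkeeping: transcribing the single-sequence specializations of~(\ref{lex:b/a}) and~(\ref{lex:a/b}) with the correct index shifts so that they line up with the two inequalities $\beta_{t+1}\le\alpha_t\le\beta_t$ of~(\ref{def:RD}), handling the degenerate zero cases by the paper's default convention on $\frac{a}{b}\le\frac{c}{d}$, and confirming that convolution is used in the commutative orientation that turns $C*A\ls C*B$ into $A*C\ls B*C$. There is no genuine analytic difficulty here, since all the real work was already carried out inside Theorem~\ref{thm:sumclvls}.
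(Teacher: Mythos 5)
Your proposal is correct and is exactly the paper's own route: the paper offers no separate proof of Corollary~\ref{cor:AC:BC}, deriving it precisely as the $n=1$ specialization of Theorem~\ref{thm:sumclvls}, where the two lexicographic conditions collapse (as you verify) to the two inequalities $\beta_{k+1}\le\alpha_k\le\beta_k$ of~(\ref{def:RD}), i.e.\ to $A\ls B$. Your extra care about the vacuity of $\W_1\in\ltLC$, the log-concavity of $C$, and commutativity of convolution is sound bookkeeping that the paper leaves implicit.
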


Theorem~\ref{thm:sumclvls} also implies the next result on the ratio-dominance relation between convolutions.

\begin{cor}\label{cor:AC:BD}
Let $A,B,C,D$ be nonnegative sequences without internal zeros.
If $(A,B,C,D)\in\ltLC$, then $A*C\ls B*D$.
\end{cor}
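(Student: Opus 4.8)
The plan is to reach $A*C \ls B*D$ through an intermediate convolution together with the transitivity criterion of Lemma~\ref{lem:A:C}. Since $(A,B,C,D)\in\ltLC$, every pair among the four sequences is ratio-comparable in index order; in particular $A\ls B$, $C\ls D$, $B\ls C$, and $A\ls D$, and all pairs are synchronized. My target is to produce the chain $A*C \ls B*C \ls B*D$ and then upgrade it to $A*C \ls B*D$ by supplying the synchronicity $A*C \sim B*D$ that Lemma~\ref{lem:A:C} requires as its third hypothesis.

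First I would obtain the two links of the chain from Corollary~\ref{cor:AC:BC}, which states that convolution by a fixed log-concave sequence preserves ratio-dominance. Applying it to $A\ls B$ with convolver $C$ gives $A*C \ls B*C$; applying it to $C\ls D$ with convolver $B$ (and using commutativity of $*$) gives $B*C \ls B*D$. These two steps are routine once Corollary~\ref{cor:AC:BC} is in hand, and each of the sequences involved is a convolution of log-concave sequences without internal zeros, hence log-concave, so the cited hypotheses are satisfied.

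The main obstacle is the synchronicity $A*C \sim B*D$. One cannot simply chain it from $A*C \sim B*C$ and $B*C \sim B*D$, because synchronicity is explicitly \emph{not} transitive. The key is to invoke Theorem~\ref{thm:pwconvo}, which converts two ratio-dominance relations $X\ls Y$ and $Z\ls W$ into the single synchronicity $X*W \sim Y*Z$. Choosing the pairing $X=A\ls Y=D$ and $Z=B\ls W=C$, both of which are available from $(A,B,C,D)\in\ltLC$, yields exactly $A*C \sim D*B = B*D$. Selecting this particular pairing is the crux: the more obvious choice $A\ls B$ and $C\ls D$ would instead produce $A*D \sim B*C$, which is not what is needed here.

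With the chain $A*C \ls B*C \ls B*D$ and the synchronicity $A*C \sim B*D$ in hand, Lemma~\ref{lem:A:C} delivers the desired relation $A*C \ls B*D$, completing the argument.
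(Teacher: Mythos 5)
Your proposal is correct and follows essentially the same route as the paper's own proof: both establish the chain $A*C \ls B*C \ls B*D$ via Corollary~\ref{cor:AC:BC}, obtain the synchronicity $A*C \sim B*D$ from Theorem~\ref{thm:pwconvo} using exactly the cross pairing $A\ls D$ and $B\ls C$, and conclude with Lemma~\ref{lem:A:C}. Your remark about why the ``obvious'' pairing $A\ls B$, $C\ls D$ would give the wrong synchronicity is a nice observation, but the argument itself is the paper's.
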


\begin{proof} \vsb
Since $A\ls D$ and $B\ls C$, we deduce $A*C\sim B*D$ by Theorem~\ref{thm:pwconvo}.
By Corollary~\ref{cor:AC:BC}, we have $A*C\ls B*C$ and $B*C\ls B*D$.
Now the desired relation follows immediately from Lemma~\ref{lem:A:C}.
\end{proof}

To provide a better interpretation of the lexicographic conditions in the statement of Theorem~\ref{thm:sumclvls}, the next theorem gives some of its consequences.

\begin{thm}\label{thm:LX}
Let $\A_n$ and $\B_n$ be two sequences of nonnegative sequences.  Suppose that both the functions $b_{i,t}/a_{i,t}$ and $a_{i,t-1}/b_{i,t}$ are lexicographic.  Then $\A_n\in\gtLC$, $\B_n\in\gtLC$ and $A_i\ls B_i$ for all $1\le i\le n$.
\end{thm}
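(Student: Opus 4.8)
The plan is to unwind the two lexicographic hypotheses into the explicit inequalities \eqref{lex:b/a} and \eqref{lex:a/b}, and then to read off each of the three conclusions by tracking the monotonicity of suitable ratios. Recall from the closing lines of the proof of Theorem~\ref{thm:sumclvls} that the conjunction of the two lexicographic conditions is equivalent to the conjunction of the interleaving inequalities \eqref{eq71} and the wrap-around inequalities \eqref{eq81}. I would begin by invoking that equivalence, so that \eqref{lex:b/a}, \eqref{lex:a/b}, \eqref{eq71}, and \eqref{eq81} are all available.

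First I would establish $A_i\ls B_i$. Fixing $i$ and chaining \eqref{lex:b/a} across a single column boundary gives $\frac{b_{i,t}}{a_{i,t}}\le\frac{b_{n,t}}{a_{n,t}}\le\frac{b_{1,t+1}}{a_{1,t+1}}\le\frac{b_{i,t+1}}{a_{i,t+1}}$, so $t\mapsto b_{i,t}/a_{i,t}$ is non-decreasing; chaining \eqref{lex:a/b} the same way shows $t\mapsto a_{i,t-1}/b_{i,t}$ is non-decreasing. Cross-multiplying the step $b_{i,k-1}/a_{i,k-1}\le b_{i,k}/a_{i,k}$ gives $\alpha_{i,k}\le\beta_{i,k}$, and cross-multiplying the step $a_{i,k-1}/b_{i,k}\le a_{i,k}/b_{i,k+1}$ gives $\beta_{i,k+1}\le\alpha_{i,k}$. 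Together these are exactly the defining inequalities \eqref{def:RD} of $A_i\ls B_i$.

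Next I would treat the two $\gtLC$ claims together. The tool is the $\gtLC$-analogue of Theorem~\ref{thm:ls}, obtained by applying that theorem to the reversed collection $(A_{n+1-i})_{i=1}^n$: one has $\A_n\in\gtLC$ precisely when $\alpha_{n,k}\le\alpha_{n-1,k}\le\cdots\le\alpha_{1,k}\le\alpha_{n,k-1}$ for every $k$, and likewise for $\B_n$. Writing \eqref{eq71} for the adjacent indices $i,i+1$ as the single interleaving $s_t\le r_t\le s_{t+1}$, where $r_t=a_{i,t}/a_{i+1,t}$ and $s_t=b_{i,t}/b_{i+1,t}$, forces both $r$ and $s$ to be non-decreasing, which after cross-multiplication yields the descending chains $\alpha_{i+1,k}\le\alpha_{i,k}$ and $\beta_{i+1,k}\le\beta_{i,k}$. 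Likewise \eqref{eq81}, read as $S_{t+1}\le R_t\le S_t$ with $R_t=a_{1,t}/a_{n,t-1}$ and $S_t=b_{1,t}/b_{n,t-1}$, forces $R$ and $S$ to be non-increasing, supplying the wrap-arounds $\alpha_{1,k}\le\alpha_{n,k-1}$ and $\beta_{1,k}\le\beta_{n,k-1}$. This completes both chains, hence $\A_n,\B_n\in\gtLC$.

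I expect no deep obstacle here; the work is bookkeeping of inequality directions and index shifts. The one genuinely load-bearing point is the wrap-around: the interleaving \eqref{eq71} alone yields ratio-dominance only between consecutive members, whereas $\gtLC$ demands synchronicity between every pair, and it is precisely the cyclic closure $\alpha_{1,k}\le\alpha_{n,k-1}$ (and its $\beta$-counterpart) from \eqref{eq81} that upgrades the local chain to the global condition needed by the analogue of Theorem~\ref{thm:ls}. I would therefore verify that step most carefully.
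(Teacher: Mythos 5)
Your proof is correct, and its $A_i\ls B_i$ part coincides exactly with the paper's (both read it off from \eqref{lex:b/a} and \eqref{lex:a/b} via the characterization \eqref{def:RD}). Where you diverge is in the two $\gtLC$ claims. The paper verifies the pairwise dominance definition directly: it writes $A_i\gs A_{i+1}$ as the two inequalities \eqref{ineq:remark}, deriving the first from \eqref{eq8} (chaining its two halves at heights $t+1$ and $t$) and the second from \eqref{eq71}; since \eqref{eq7} and \eqref{eq8} hold for all pairs $i<j$, not just adjacent ones, the same check yields every pairwise relation. You instead verify the global chain criterion $\alpha_{n,k}\le\cdots\le\alpha_{1,k}\le\alpha_{n,k-1}$ --- the within-column part from \eqref{eq71}, the wrap-around from \eqref{eq81} --- and then invoke the $\gs$-analogue of Theorem~\ref{thm:ls} (via the reversed collection) to upgrade the chain to all pairwise relations. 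Both routes lean on the equivalences established at the end of the proof of Theorem~\ref{thm:sumclvls}, and the direction you actually need (lexicographic $\Rightarrow$ \eqref{eq71} and \eqref{eq81}) is immediate cross-multiplication, so there is no hidden dependence. What your packaging buys is precisely the point you flag as load-bearing: ratio-dominance is not transitive without synchronicity, so adjacent dominance alone does not give $\gtLC$, and your explicit use of the wrap-around \eqref{eq81} together with Theorem~\ref{thm:ls} makes transparent a step the paper's proof leaves implicit (it silently relies on \eqref{eq8} holding for arbitrary $i<j$, not only $j=i+1$). The paper's route, in exchange, is shorter and avoids the reversal bookkeeping.
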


\begin{proof} \vsb
By the definition~(\ref{def:RD}), the relation $A_{i}\gs A_{i+1}$ is equivalent to
\begin{equation}\label{ineq:remark}
{a_{i,t+1}\over a_{i,t}}
\,\le\,
{a_{i+1,t}\over a_{i+1,t-1}}
\,\le\,
{a_{i,t}\over a_{i,t-1}}
\fa{t}.
\end{equation}
Note that all the transformations beyond Relation~(\ref{eq61}) are equivalences. The first inequality in~(\ref{ineq:remark}) can be seen from~(\ref{eq8}), while the second is clear from~(\ref{eq71}). This proves that $\A_n\in \gtLC$.  For the same reason, we have $\B_n\in \gtLC$. As aforementioned,
the relation $A_i\ls B_i$ can be seen from~(\ref{lex:b/a}) and~(\ref{lex:a/b}). This completes the proof.
\end{proof}

\subsection{The offset sequence}

In this subsection, we introduce a concept to be used in the next section,
which is interesting in its own right.
For any sequence $A=(a_k)$, we define the associated \mdef{offset sequence}
$A^+=(a_k^+)$ by $a_k^+=a_{k-1}$ for all~$k$.
It is easy to prove the following proposition.

\begin{prop}\label{prop:+}
Let $A$ and $B$ be nonnegative sequences. Then we have the following equivalence
relations:
\[
A\,\ls\, B\ \Longleftrightarrow\
B\,\ls\, A^+\ \Longleftrightarrow\
A^+\,\ls\, B^+.
\]
In particular, $A\ls A^+$.    \qed
\end{prop}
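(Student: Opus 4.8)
The plan is to reduce every assertion to the single characterization~(\ref{def:RD}) of ratio-dominance in terms of the successive-ratio sequences, and then to observe that passing to the offset sequence merely shifts the index of these ratios by one. The first thing I would record is the elementary identity for the ratios of an offset sequence: writing $\alpha_k^+=a_k^+/a_{k-1}^+$ and using $a_k^+=a_{k-1}$, one gets $\alpha_k^+=a_{k-1}/a_{k-2}=\alpha_{k-1}$, and likewise $\beta_k^+=\beta_{k-1}$. Thus the offset operation acts on the ratio sequences exactly as the shift $k\mapsto k-1$, which is the whole engine of the proof.

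With this identity in hand, I would translate each of the three relations through~(\ref{def:RD}). The relation $A\ls B$ reads $\beta_{k+1}\le\alpha_k\le\beta_k$ for all $k$; reindexing the left inequality ($k\mapsto k-1$) shows this is the same as the chain $\alpha_k\le\beta_k\le\alpha_{k-1}$ for all $k$. Next, applying~(\ref{def:RD}) to $B\ls A^+$ and substituting $\alpha_k^+=\alpha_{k-1}$ yields precisely $\alpha_k\le\beta_k\le\alpha_{k-1}$ for all $k$; hence $A\ls B\iff B\ls A^+$. Finally, applying~(\ref{def:RD}) to $A^+\ls B^+$ and substituting $\alpha_k^+=\alpha_{k-1}$ and $\beta_k^+=\beta_{k-1}$ gives $\beta_k\le\alpha_{k-1}\le\beta_{k-1}$, which after the relabeling $k\mapsto k+1$ is literally $\beta_{k+1}\le\alpha_k\le\beta_k$, i.e.\ $A\ls B$ again. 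This closes the chain of equivalences.

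For the concluding clause, I would specialize the computation: by~(\ref{def:RD}) together with $\alpha_k^+=\alpha_{k-1}$, the relation $A\ls A^+$ amounts to the pair $\alpha_k\le\alpha_k\le\alpha_{k-1}$ for all $k$, whose first inequality is a trivial equality and whose second is exactly the log-concavity of $A$. Equivalently, setting $B=A$ in the first equivalence gives $A\ls A\iff A\ls A^+$, and $A\ls A$ is itself the statement that $A$ is log-concave; so the ``in particular'' holds for every log-concave $A$.

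The one point requiring care, and the only genuine obstacle, is the bookkeeping at the boundary indices where terms vanish: I must confirm that the index shift $k\mapsto k\pm1$ does not manufacture spurious quotient comparisons, so that the characterization~(\ref{def:RD}) applies verbatim to the shifted ratios. This is handled by the standing no-internal-zeros hypothesis together with the default conventions for degenerate quotients (that $\tfrac{a}{b}\le\tfrac{c}{d}$ holds when $a=b=0$, $c=d=0$, or $b=d=0$), under which the vanishing initial and terminal ratios match up correctly across the shift. Everything beyond this verification is a direct index translation, so the proof is short once the identity $\alpha_k^+=\alpha_{k-1}$ is isolated.
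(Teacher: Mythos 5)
Your proof is correct, and it is essentially the argument the paper has in mind: the paper marks Proposition~\ref{prop:+} as easy and omits the proof, and the intended route is precisely your observation that the offset operation shifts the ratio sequence, $\alpha_k^+=\alpha_{k-1}$, after which all three relations collapse to the same chain of inequalities via the characterization~(\ref{def:RD}). Your attention to the degenerate-quotient conventions at the boundary is a welcome extra check, since the shifted ratios are literally the same formal quotients and the paper's default rules carry over verbatim.
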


We remark that $A\ls B$ does not imply $A\sim B^+$.  The next theorem
is a corollary of Theorem~\ref{thm:pwconvo}, with a proof similar to the last part of the proof of Theorem~\ref{thm:pwconvo}.

\begin{thm}\label{thm:BC:AC+}
Let $A,B,C$ be nonnegative log-concave sequences without internal zeros.
If $A\ls B$, then $B*C\ls A*C^+$.
\end{thm}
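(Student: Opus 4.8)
The plan is to reduce the claim to two facts already in hand: that convolution with a fixed log-concave sequence preserves ratio-dominance (Corollary~\ref{cor:AC:BC}), and that passing to the offset sequence interchanges the two sides of a ratio-dominance relation (Proposition~\ref{prop:+}). First I would record the elementary identity $A*C^+=(A*C)^+$. Indeed, $(A*C^+)_k=\sum_i a_i c^+_{k-i}=\sum_i a_i c_{k-i-1}=(A*C)_{k-1}$, so convolving with the offset of $C$ is the same as offsetting the convolution $A*C$ (on the polynomial side it simply multiplies the product of the two generating polynomials by $x$). Consequently the target relation $B*C\ls A*C^+$ is literally $B*C\ls (A*C)^+$.

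With this reformulation the argument is a two-step deduction. Since $A,B,C$ are log-concave and nonnegative without internal zeros and $A\ls B$, Corollary~\ref{cor:AC:BC} yields $A*C\ls B*C$. I then apply Proposition~\ref{prop:+} to the pair $P=A*C$ and $Q=B*C$: the equivalence $P\ls Q\Longleftrightarrow Q\ls P^+$ converts $A*C\ls B*C$ into $B*C\ls (A*C)^+$, which together with the identity above is exactly $B*C\ls A*C^+$. The synchronicity $B*C\sim (A*C)^+$ demanded by the ratio-dominance conclusion is supplied automatically by Proposition~\ref{prop:+}, so the cautionary remark that $A\ls B$ need not imply $A\sim B^+$ does not obstruct us: we are using offsetting in its favourable direction.

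Because the statement is flagged as a corollary of Theorem~\ref{thm:pwconvo}, I would keep in reserve a direct argument that bypasses Corollary~\ref{cor:AC:BC}. One writes $B*C=(t_k)$ and $A*C^+=(r_k)$, expands $t_{k-1}r_{k+1}$ and $t_k r_k$ as double arrays in the indices of $A,B,C$, and pairs each sub-diagonal term with its reflection across the main diagonal, precisely as in the closing portion of the proof of Theorem~\ref{thm:pwconvo}; the two ingredients are the log-concavity of $C$ (to handle the $c$-factors after the single-index shift coming from $C^+$) and the relation $A\ls B$ (to dominate the mixed $a$-$b$ products), with the remaining synchronicity inequality following by the symmetric computation. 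The hard part in that route is purely bookkeeping: tracking the offset introduced by $C^+$ through the reflection pairing so that the $C$-log-concavity differences carry the correct sign. Since the short route via Proposition~\ref{prop:+} and Corollary~\ref{cor:AC:BC} avoids this entirely, I would present the short route as the proof and mention the direct computation only as a remark.
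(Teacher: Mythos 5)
Your proof is correct, and it takes a genuinely different route from the paper's. The paper proves the theorem in two pieces: it first gets the synchronicity $B*C\sim A*C^+$ by applying Theorem~\ref{thm:pwconvo} to the pair of relations $A\ls B$ and $C\ls C^+$ (the latter from Proposition~\ref{prop:+}), and then it establishes the ratio inequality by a direct computation --- expanding $\sum_i a_ic_{k-i-1}\sum_j b_jc_{k-1-j} - \sum_i a_ic_{k-i-2}\sum_j b_jc_{k-j}$ and pairing terms across the diagonal, using $a_{i-1}b_j-a_{j-1}b_i\le 0$ (from $A\ls B$) and the log-concavity of $C$. You instead observe the identity $A*C^+=(A*C)^+$ (offsetting commutes with convolution), invoke Corollary~\ref{cor:AC:BC} to get $A*C\ls B*C$, and then apply the equivalence $P\ls Q\Longleftrightarrow Q\ls P^+$ from Proposition~\ref{prop:+} with $P=A*C$, $Q=B*C$ to land exactly on $B*C\ls (A*C)^+=A*C^+$. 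Your handling of the paper's caution (that $A\ls B$ does not imply $A\sim B^+$) is also right: the equivalence in Proposition~\ref{prop:+} offsets the ratio-dominated side, which is the favourable direction, and it carries the required synchronicity with it. What the two approaches buy: yours is shorter, computation-free, and makes the theorem an immediate formal consequence of results already on the page, though it leans on Corollary~\ref{cor:AC:BC} and hence on the heavier lexicographic machinery of Theorem~\ref{thm:sumclvls}; the paper's proof is self-contained modulo Theorem~\ref{thm:pwconvo}, at the cost of repeating a reflection-pairing argument. Your reserve sketch of a direct argument is essentially what the paper actually does, so presenting the short route as the main proof is a reasonable editorial choice.
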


\begin{proof} \vsb
By Proposition~\ref{prop:+}, we have $C\ls C^+$.
Applying Theorem~\ref{thm:pwconvo}, we deduce $B*C\sim A*C^+$.
Now it suffices to show that \[
{\sum_j b_jc_{k-j}\over\sum_j b_jc_{k-1-j}}
\le
{\sum_i a_ic_{k-i}^+\over\sum_i a_ic_{k-1-i}^+}.
\]
Equivalently,
\[
\sum_i a_ic_{k-i-1}\sum_j b_jc_{k-1-j}
\ge
\sum_i a_ic_{k-i-2}\sum_j b_jc_{k-j}.
\]
The difference between the sides of the above inequality
can be transformed as
\begin{align*}
&\sum_{i}\sum_{j}
a_ib_j(c_{k-i-1} c_{k-1-j}-c_{k-i-2}c_{k-j})\\
=&\sum_{i}\sum_{j}
a_{i-1}b_j(c_{k-i} c_{k-1-j}-c_{k-i-1}c_{k-j})\\
=&\sum_{i<j}
(a_{i-1}b_j-a_{j-1}b_i)(c_{k-i} c_{k-1-j}-c_{k-i-1}c_{k-j}).
\end{align*}
Let $i<j$. Since $A\ls B$, we have
$a_{i-1}b_j-a_{j-1}b_i\le 0$.
On the other hand, the log-concavity of
the sequence $C$ implies $c_{n-i} c_{n-1-j}-c_{n-i-1}c_{n-j}\le 0$.
Therefore, every summand in the above sum is nonnegative.
This completes the proof.
\end{proof}

\medskip
\section{\large Partial Genus Distributions} \label{sec:pgd}

We denote the oriented surface of genus $i$ by $S_i$.
For any vertex-rooted (or edge-rooted) graph $(G,y)$, with $y$ either a $2$-valent vertex or an edge with two $2$-valent endpoints,  we define

\begin{tabular}{ccl}
$g_i(G)$ &:& the number of embedding $G\to S_i$. \\[2pt]
$d_i(G,y)$ &:& the number of embeddings $G\to S_i$, such that \\[2pt]
&& two different face-boundary walks are incident on $y$.  \\ [2pt]
$s_i(G,y)$ &:& the number of embeddings $G\to S_i$ such that a single \\[-1pt]
&& \quad face-boundary walk is twice incident on root $y$.  \\[3pt]
\end{tabular} \vsb

\noi We observe that $g_i(G) \,=\, d_i(G,y)+s_i(G,y)$.
\smallskip

We define the \mdef{genus distribution polynomial} as the power series \vskip-15pt
$$
\Gamma(G)(x)~=~g_0(G) + g_1(G)x + g_2(G)x^2 + \cdots
$$ \vsb
and the \mdef{partial genus distribution polynomials} by
\begin{align*}
D(G,y)(x) &=  d_0(G,y) + d_1(G,y)x + d_2(G,y)x^2 + \cdots, \\
S(G,y)(x) &=  s_0(G,Y) + s_1(G,Y)x + s_2(G,y)x^2 + \cdots.
\end{align*} \vsb
\noi We say that the genus distribution $\Gamma(G)$ is partitioned into the partial genus distributions $D(G,y)$ and $S(G,y)$, and we refer to the pair $\{D(G,y),  S(G,y)\}$ as a \mdef{partitioned genus distribution}.

More generally we allow any subgraph of a graph to serve as a root.  The number of partial genus distributions needed for calculations involving amalgamations rises rapidly with the number of vertices and edges in the root subgraph.  The best understood instances are when the root comprises two vertices or two edges.  We say then that the graph is \mdef{doubly rooted}.  Just as a single-root partial genus distribution refines a total genus distribution, a double-root partial genus distribution refines a single-root distribution.

We observe that genus distributions are integer sequences with non-negative terms, only finitely many non-zero terms, and no internal zeros (due to the ``interpolation theorem'').
 It is not presently known whether partial genus distributions can have internal zeros.

\noi\textbf{Remark.}
Although calculating the minimum genus of a graph is known to be NP-hard, it has been proved that for any fixed treewidth and bounded degree, there is a quadratic-time algorithm \cite{Gr12b} for calculating genus distributions and partial genus distributions.  Nonetheless, it is not a practical algorithm.  The values in the partial genus distributions given in this paper were calculated by a ``brute force'' computer program created by Imran Khan.  Using symmetries, it is not difficult to calculate the partial genus distribution values used in Examples \ref{eg:W4-chain}, \ref{eg:ML4-chain}, and \ref{eg:K4-chain} by hand.  A hand calculation for Example \ref{eg:circ7:1,2-chain} would be tedious.

\subsection{Iterative amalgamation at vertex-roots} \label{sec:iter-V}

In this subsection, we present a theorem that will enable us to establish conditions sufficient for all the members of a sequence of graphs constructed by iterative vertex-amalgamation to have log-concave genus distributions.  For amalgamations involving a doubly vertex-rooted graph $(G,u,v)$ with two
$2$-valent roots, the following partitioning of the embedding counting variable $g_i(G)$  is described in \cite{Gr11a}:
$$\begin{matrix}
dd^0_i(G,u,v) &  dd'_i(G,u,v) & dd''_i(G,u,v) & ds^0_i(G,u,v) & ds'_i(G,u,v) \\[3pt]
sd^0_i(G,u,v) & sd'_i(G,u,v) & ss^0_i(G,u,v) & ss^1_i(G,u,v) & ss^2_i(G,u,v).
\end{matrix}$$
We use two upper-case letters to denote the corresponding double-root partial genus distributions, for which purpose we signify the graph by a subscript.  For instance, $DS'_{(G,u,v)}$ denotes the sequence
$$ds'_0(G,u,v),\ ds'_1(G,u,v),\ ds'_2(G,u,v),\ \ldots$$
It is sometimes convenient to use the groupings
\begin{eqnarray*}
dd_i(G,u,v) &=& dd^0_i(G,u,v) + dd'_i(G,u,v) + dd''_i(G,u,v), \\
ds_i(G,u,v) &=& ds^0_i(G,u,v) + ds'_i(G,u,v), \\
sd_i(G,u,v) &=& sd^0_i(G,u,v) + sd'_i(G,u,v), \\
ss_i(G,u,v) &=&  ss^0_i(G,u,v) + ss^1_i(G,u,v) +  ss^2_i(G,u,v).
\end{eqnarray*}
When we form a linear chain of copies of a graph $G$, we suppress the first root in the initial  copy of $G$, and we define
\begin{eqnarray*}
D_{(G,v)} &=& DD_{(G,u,v)} + SD_{(G,u,v)}, \\
S_{(G,v)} &=& DS_{(G,u,v)} + SS_{(G,u,v)}.
\end{eqnarray*}

\begin{thm}  \label{thm:chain-V}
{\allowdisplaybreaks
Let $(G, t)$ be a vertex-rooted graph and $(H, u, v)$ a doubly vertex-rooted graph, where all roots are $2$-valent.  Let $(X,v)$ be the vertex-rooted graph obtained from the disjoint union $G\sqcup H$ by merging vertex $t$ with vertex $u$.  Then the following recursion holds true:
\begin{eqnarray}
 D_{(X,v)} &=& \hskip10pt 4D_{(G,t)}*DD_{H(u,v)} \,+\, 2D_{(G,t)}*DD^{0+}_{H(u,v)} \label{def:vertex:D}\\
&&+\, 2D_{(G,t)}*DD'^{+}_{H(u,v)}  \,+\, 6S_{(G,t)}*DD_{H(u,v)} \notag \\
&&+\, 6D_{(G,t)}*SD_{H(u,v)}   \,+\, 6S_{(G,t)}*SD_{H(u,v)}  \notag\\
&&+\, 2D_{(G,t)}*SS^2_{H(u,v)},   \notag\\
S_{(X,v)} &=& \hskip10pt  2D_{(G,t)}*DD''^{+}_{H(u,v)} \,+\, 4D_{(G,t)}*DS_{H(u,v)} \label{def:vertex:S}\\
&&+\, 2D_{(G,t)}*DS'^{+}_{H(u,v)} \,+\, 6S_{(G,t)}*DS_{H(u,v)}  \notag\\
&&+\, 6D_{(G,t)}*SS^0_{H(u,v)} \,+\, 6D_{(G,t)}*SS^1_{H(u,v)} \notag \\
&&+\, 4D_{(G,t)}*SS^{2}_{H(u,v)}  \,+\, 6S_{(G,t)}*SS_{H(u,v)}. \notag
 \end{eqnarray}
}
\end{thm}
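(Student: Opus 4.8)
The plan is to derive the two recursions by a careful enumeration of how embeddings of the amalgamated graph $(X,v)$ arise from pairs of embeddings of the amalgamands $(G,t)$ and $(H,u,v)$. The vertex-amalgamation identifies the $2$-valent root $t$ of $G$ with the root $u$ of $H$, producing a single vertex of valence $4$ in $X$. First I would recall the standard fact (from the partitioned-genus-distribution machinery of \cite{Gr11a}) that an embedding of $X$ is obtained by choosing a rotation at the merged vertex that interleaves the two edge-ends coming from $G$ with the two coming from $H$, together with a choice of embeddings of $G$ and of $H$. The number of admissible interleavings is fixed (there are a constant number of rotation systems at a $4$-valent vertex up to the relevant symmetry), which is the origin of the integer coefficients $2,4,6$ appearing in \eqref{def:vertex:D} and \eqref{def:vertex:S}.

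The key step is a case analysis on the face-boundary structure at the two roots. For each embedding I must track whether the root $t$ of $G$ lies on two distinct face-boundary walks (counted by $D_{(G,t)}$) or on a single one traversed twice (counted by $S_{(G,t)}$), and likewise for the double-root data of $H$, which is refined by the ten variables $dd^0,dd',dd'',ds^0,ds',sd^0,sd',ss^0,ss^1,ss^2$. For each combination of a $G$-type with an $H$-type, and for each of the finitely many ways of interleaving the rotations at the merged $4$-valent vertex, I would determine the genus of the resulting embedding of $X$ (via the effect on the Euler characteristic, i.e.\ whether faces merge or split) and whether the surviving root $v$ is of type $D$ or $S$ in $X$. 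Summing the contributions over all cases, grouped by whether the outcome contributes to $D_{(X,v)}$ or to $S_{(X,v)}$, yields the stated convolutions; the genus-shift in certain cases is exactly what produces the offset superscripts $DD^{0+}$, $DD'^+$, $DD''^+$, $DS'^+$ (a convolution against the offset sequence in the sense of the preceding subsection, reflecting a $+1$ contribution to the genus when two faces coalesce).

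The main obstacle will be the bookkeeping of the face-tracking in the mixed cases, especially the $ss^0,ss^1,ss^2$ subdivision of the $H$-data: these record how many times the single face-boundary walks at $u$ and at $v$ coincide or are linked, and getting the precise genus contribution (and hence the correct coefficient and the correct $D$-versus-$S$ classification of $v$) right in each subcase is delicate. I would organize this by drawing the local rotation pictures at the merged vertex for each of the admissible interleavings and reading off the face-boundary walk surgery directly, relying on the additivity $g_i(G)=d_i(G,y)+s_i(G,y)$ and on the groupings $dd_i,ds_i,sd_i,ss_i$ to consolidate parallel subcases. Once every case is assigned, collecting like terms and invoking the definitions $D_{(G,v)}=DD+SD$ and $S_{(G,v)}=DS+SS$ for the first root of each copy produces the final recursions \eqref{def:vertex:D} and \eqref{def:vertex:S} directly.
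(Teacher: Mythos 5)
Your proposal takes a genuinely different route from the paper, because the paper does not actually prove this theorem: its entire proof is the one-line citation of Corollary 3.8 of \cite{GKP10}. What you outline is, in substance, the derivation by which that cited result is itself established, and the framework is sound: embeddings of $X$ restricting to a fixed pair of embeddings of $G$ and $H$ correspond exactly to the $3!=6$ rotations at the merged $4$-valent vertex; the genus of each outcome is read off from the Euler characteristic via the number of faces surviving at that vertex; and the $D$-versus-$S$ type of $v$ in $X$ is decided by whether the face-boundary walks incident on $v$ are merged or split by the surgery at the amalgamation vertex.

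The genuine gap is that your write-up stops exactly where the theorem's content begins. The theorem \emph{is} the table of productions --- which of the six rotations give genus $i+j$ versus $i+j+1$, and which leave $v$ of type $d$ versus $s$, for each of the $2\times 10$ pairs of root types --- and you defer all of it (``I would determine\dots'', ``the main obstacle will be the bookkeeping''). Nothing is proved until those cases are traced, and the bookkeeping is not routine verification; indeed it is where correctness lives. Concretely: tracing faces for $d_i\cdot ds^0_j$ gives $4\,s_{i+j}+2\,s_{i+j+1}$, exactly as for $d_i\cdot ds'_j$, because the face at $v$ meets neither face at $u$ and so is untouched, and because each face at a $d$-type root has only one corner there, so surgery can merge faces but never split them, and $v$ can never switch from $s$-type to $d$-type. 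Hence the $S$-recursion must contain the term $2D_{(G,t)}*DS^{0+}_{H(u,v)}$ alongside $2D_{(G,t)}*DS'^{+}_{H(u,v)}$; that term is absent from the statement as printed. A global count confirms an omission: since each pair of embeddings of $G$ and $H$ yields exactly six embeddings of $X$, the coefficients attached to each $H$-partial must total $6$, and $\sum\Gamma(X)=6\bigl(\sum\Gamma(G)\bigr)\bigl(\sum\Gamma(H)\bigr)$. In Example~\ref{eg:W4-chain} this forces $\sum\Gamma(X)=6\cdot 96\cdot 96=55296$, whereas the printed recursion yields $26680+28248=54928$, short by exactly $2\bigl(\sum D_{(G,t)}\bigr)\bigl(\sum DS^0_{(H,u,v)}\bigr)=2\cdot 46\cdot 4=368$. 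So executing your plan faithfully would not merely fill in details; it would surface and correct this discrepancy. One smaller imprecision in your sketch points the same way: the genus increment $+1$ is not caused ``when two faces coalesce'' (that also happens in genus-preserving cases), but precisely when all four face-corners at the merged vertex close up into a single face, which requires both amalgamated roots to be $d$-type and the rotation to alternate the $G$- and $H$-edge-ends; this is why offsets occur only on the $D_{(G,t)}$-against-$DD/DS$ terms.
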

\begin{proof}\vsb
This theorem is a form of Corollary 3.8 of \cite{GKP10}.
\end{proof}

\begin{thm} \label{thm:rec-V}
Let $(G,t)$ be a vertex-rooted graph such that $D_{(G,t)} \lesssim S_{(G,t)}$.  Let $(H,u,v)$ be a doubly vertex-rooted graph.  We introduce the following abbreviations:
\begin{equation}\label{def:AB:V}
\begin{aligned}
A_1\hsb\ &= DD^{0+}_{(H,u,v)}+\hsc DD'^+_{(H,u,v)}+\hsc SS^2_{(H,u,v)}+\hsc SD_{(H,u,v)},\\
A_2\hsb\ &= SD_{(H,u,v)}+\hsc DD_{(H,u,v)},\\
B_1\hsb\ &= DS'^+_{(H,u,v)}+\hsc DD''^+_{(H,u,v)}+\hsc SS^0_{(H,u,v)}+\hsc SS^1_{(H,u,v)}, \\
B_2\hsb\ &= SS_{(H,u,v)}+\hsc DS_{(H,u,v)}.
\end{aligned}
\end{equation}
Suppose that
\begin{equation} \label{cond:vertex:ls}
{b_{1,t}\over a_{1,t}}\,\le\,{b_{2,t}\over a_{2,t}}\,\le\,{b_{1,t+1}\over a_{1,t+1}}
\quad\text{ and }\quad
{a_{1,t-1}\over b_{1,t}}\,\le\,{a_{2,t-1}\over b_{2,t}}\,\le\,{a_{1,t}\over b_{1,t+1}}
\fa{t}.
\end{equation}
Then the partial genus distributions $D_{(X,v)}$ and $S_{(X,v)}$
of the vertex-rooted graph $(X,v)$, obtained from $G\sqcup H$ by merging vertices $t$ and $u$, are both log-concave;  moreover,  we have $D_{(X,v)}\lesssim S_{(X,v)}$.
\end{thm}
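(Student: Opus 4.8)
The plan is to obtain the whole statement from a single application of Theorem~\ref{thm:sumclvls} with $n=2$, exactly as anticipated by the remark following that theorem. Writing $D = D_{(G,t)}$ and $S = S_{(G,t)}$ for brevity, the first step is to repackage the two seven-term recursions~\eqref{def:vertex:D} and~\eqref{def:vertex:S} of Theorem~\ref{thm:chain-V} into the compact form
\[
D_{(X,v)} = W_1 * A_1 + W_2 * A_2, \qquad S_{(X,v)} = W_1 * B_1 + W_2 * B_2,
\]
where $A_1,A_2,B_1,B_2$ are the abbreviations in~\eqref{def:AB:V} and the two weight sequences are $W_1 = 2D$ and $W_2 = 4D + 6S$. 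I would get this by collecting in each recursion the summands carrying a factor $D*$ and those carrying $S*$, and then using bilinearity of convolution to combine, for instance,
\[
4\,D * A_2 + 6\,S * A_2 = (4D + 6S) * A_2.
\]
The key observation to record here is that the groupings in~\eqref{def:AB:V} are designed precisely so that the coefficient pattern is identical in the two recursions: the weight $W_1 = 2D$ multiplies both $A_1$ and $B_1$, and the weight $W_2 = 4D + 6S$ multiplies both $A_2$ and $B_2$. A term-by-term check against~\eqref{def:vertex:D} and~\eqref{def:vertex:S} confirms the regrouping.

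Next I would verify the hypotheses of Theorem~\ref{thm:sumclvls} for $\W_2 = (W_1,W_2)$, $\A_2 = (A_1,A_2)$, and $\B_2 = (B_1,B_2)$. The ratio-dominance chain $\W_2 \in \ltLC$ is, for $n=2$, simply $W_1 \ls W_2$, i.e. $2D \ls 4D + 6S$. Since $D$ is log-concave we have $D \ls D$, and the hypothesis supplies $D \ls S$; hence both $D \ls D$ and $D \ls S$ hold, and Theorem~\ref{thm:sumls} (with scalar $2$ on the left and scalars $4,6$ on the right) delivers $2D \ls 4D + 6S$. The two lexicographic requirements of Theorem~\ref{thm:sumclvls} — that the bivariate functions $b_{i,t}/a_{i,t}$ and $a_{i,t-1}/b_{i,t}$ be lexicographic for $i = 1,2$ — coincide term-for-term with the two inequality chains in the hypothesis~\eqref{cond:vertex:ls}, so no further work is needed there. (Throughout I would invoke the standing convention that all sequences are nonnegative and without internal zeros, so that the convolution results of \S\ref{sec:LC} apply to $W_1$, $W_2$, and the $A_i,B_i$.)

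With the hypotheses in place, Theorem~\ref{thm:sumclvls} gives $\sum_i W_i * A_i \ls \sum_i W_i * B_i$, that is, $D_{(X,v)} \ls S_{(X,v)}$. The log-concavity of both partial genus distributions then follows at once: by Definition~\eqref{def:RD} the relation $D_{(X,v)} \ls S_{(X,v)}$ entails $D_{(X,v)} \sim S_{(X,v)}$, and synchronicity of two sequences forces each of them to be log-concave. This yields all three assertions of the statement.

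I expect the only genuine obstacle to be the bookkeeping of the first step, namely confirming that the seven summands of each recursion reassemble exactly into $W_1*(\cdot) + W_2*(\cdot)$ with the \emph{same} pair of weights for $D_{(X,v)}$ and for $S_{(X,v)}$; everything after that is a direct invocation of the machinery of \S\ref{sec:LC}. The one essential analytic input beyond that bookkeeping is the hypothesis $D_{(G,t)} \ls S_{(G,t)}$, which is exactly what is needed to produce the weight chain $W_1 \ls W_2$ and hence to place $\W_2$ in $\ltLC$.
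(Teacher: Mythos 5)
Your proposal is correct and follows essentially the same route as the paper's own proof: the identical regrouping $D_{(X,v)} = 2D_{(G,t)}*A_1 + (4D_{(G,t)}+6S_{(G,t)})*A_2$ and $S_{(X,v)} = 2D_{(G,t)}*B_1 + (4D_{(G,t)}+6S_{(G,t)})*B_2$, then Theorem~\ref{thm:sumls} with the premise $D_{(G,t)}\ls S_{(G,t)}$ to obtain $2D_{(G,t)} \ls 4D_{(G,t)}+6S_{(G,t)}$, and finally a direct invocation of Theorem~\ref{thm:sumclvls} with $n=2$. Your write-up in fact supplies slightly more detail than the paper (the explicit check $D\ls D$ and $D\ls S$ behind the application of Theorem~\ref{thm:sumls}, and the closing remark that ratio-dominance implies synchronicity and hence log-concavity of both partials), all of which is consistent with the paper's argument.
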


\begin{proof} \vsb
In view of~(\ref{def:vertex:D}) and~(\ref{def:vertex:S}),
we observe that
\begin{equation}\label{fm:X-DS}
\begin{split}
D_{(X,v)}&=~ 2D_{(G,t)}*A_1+(4D_{(G,t)}+6S_{(G,t)})*A_2,\\
S_{(X,v)}&=~ 2D_{(G,t)}*B_1+(4D_{(G,t)}+6S_{(G,t)})*B_2.
\end{split}
\end{equation}
By Theorem~\ref{thm:sumls} and the premise $D_{(G,t)} \lesssim S_{(G,t)}$, we have the relation
\[
2D_{(G,t)} \lesssim 4D_{(G,t)} + 6S_{(G,t)}.
\]
Therefore, we deduce $D_{(X,v)}\lesssim S_{(X,v)}$ by Theorem \ref{thm:sumclvls} directly.
This completes the proof.
\end{proof}
\smallskip

\begin{cor} \label{cor:V-chain}
Let $(G,t)$ be a vertex-rooted graph such that $D_{(G,t)} \lesssim S_{(G,t)}$.
Let $\bigl( (H_k,u_k,v_k)\bigr)_{k=1}^n$ be a sequence of doubly vertex-rooted graphs,
whose partial genus distributions satisfy Relation~\eqref{cond:vertex:ls}.
Then the iteratively vertex-amalgamated graph
$$(X_n,v_n) \,=\, (G,t)*(H_1,u_1,v_1)*\cdots*(H_n,u_n,v_n)$$
has log-concave partial genus distributions  $D_{(X_n,v_n)}$ and $S_{(X_n,v_n)}$, and $D_{(X_n,v_n)}\lesssim S_{(X_n,v_n)}$.  Moreover, the genus distribution $\Gamma(X)$ is log-concave.
\end{cor}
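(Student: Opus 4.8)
The plan is to prove Corollary~\ref{cor:V-chain} by induction on $n$, using Theorem~\ref{thm:rec-V} as the inductive engine. The base case is the hypothesis on the initial amalgamand $(G,t)$, namely $D_{(G,t)}\ls S_{(G,t)}$. For the inductive step, I would set $(X_0,v_0)=(G,t)$ and, for each $k\ge1$, let $(X_k,v_k)$ be the vertex-rooted graph obtained by amalgamating $(H_k,u_k,v_k)$ onto $(X_{k-1},v_{k-1})$ at the root. The crucial observation is that Theorem~\ref{thm:rec-V} is perfectly shaped for this iteration: its sole hypothesis on the left-hand amalgamand is precisely the ratio-dominance relation $D\ls S$, and its conclusion reproduces that very same relation $D_{(X,v)}\ls S_{(X,v)}$ for the amalgamated graph. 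Thus the property ``$D_{(X_{k-1},v_{k-1})}\ls S_{(X_{k-1},v_{k-1})}$'' is exactly what is needed as input and exactly what is produced as output, so it propagates automatically along the chain.

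First I would verify that each single amalgamation step legitimately invokes Theorem~\ref{thm:rec-V}. The inductive hypothesis supplies $D_{(X_{k-1},v_{k-1})}\ls S_{(X_{k-1},v_{k-1})}$, which plays the role of the premise $D_{(G,t)}\ls S_{(G,t)}$ in that theorem. The doubly vertex-rooted graph $(H_k,u_k,v_k)$ supplies the amalgamand $(H,u,v)$, and by hypothesis its partial genus distributions satisfy Relation~\eqref{cond:vertex:ls}, which is exactly the lexicographic-type condition~\eqref{cond:vertex:ls} required by the theorem. Theorem~\ref{thm:rec-V} then yields that $D_{(X_k,v_k)}$ and $S_{(X_k,v_k)}$ are each log-concave and that $D_{(X_k,v_k)}\ls S_{(X_k,v_k)}$, completing the inductive step. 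Iterating from $k=1$ to $n$ gives the log-concavity of $D_{(X_n,v_n)}$ and $S_{(X_n,v_n)}$ together with $D_{(X_n,v_n)}\ls S_{(X_n,v_n)}$, which is the main assertion.

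It remains to deduce the final sentence, that the total genus distribution $\Gamma(X_n)$ is log-concave. Here I would use the partition identity $\Gamma(X_n)=D_{(X_n,v_n)}+S_{(X_n,v_n)}$ (the single-root analogue of $g_i=d_i+s_i$ recorded at the start of \S\ref{sec:pgd}). Since $D_{(X_n,v_n)}\ls S_{(X_n,v_n)}$, these two sequences are in particular synchronized, so $D_{(X_n,v_n)}\sim S_{(X_n,v_n)}$; by Theorem~\ref{thm:lincomb} (or directly by Lemma~\ref{lem:add}, with the collection $\{D_{(X_n,v_n)},S_{(X_n,v_n)}\}\in\tLC$), their sum is log-concave. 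Hence $\Gamma(X_n)$ is log-concave.

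The step I expect to carry the real weight is not in this corollary at all but is already discharged by Theorem~\ref{thm:rec-V}, whose proof recasts the recursion~\eqref{fm:X-DS} for $D_{(X,v)}$ and $S_{(X,v)}$ as sums of convolutions and appeals to Theorem~\ref{thm:sumclvls}. Within the corollary itself, the only genuine point requiring care is bookkeeping: I must confirm that Relation~\eqref{cond:vertex:ls} is a condition purely on the partial genus distributions of the amalgamand $H_k$ and does not involve the growing chain $X_{k-1}$, so that the same hypotheses remain available at every stage without needing to be re-verified for the amalgamated graphs. Because~\eqref{cond:vertex:ls} is stated entirely in terms of the sequences $A_1,A_2,B_1,B_2$ built from $(H_k,u_k,v_k)$, this independence holds, and the induction goes through cleanly.
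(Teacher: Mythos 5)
Your proposal is correct and matches the paper's own proof, which likewise proceeds by iterative application of Theorem~\ref{thm:rec-V} (whose hypothesis and conclusion are both the relation $D\ls S$, making the induction self-propagating) and then applies Lemma~\ref{lem:add} to the sum $\Gamma(X)=D_{(X_n,v_n)}+S_{(X_n,v_n)}$. Your additional bookkeeping check---that Relation~\eqref{cond:vertex:ls} concerns only the amalgamands $H_k$ and never the growing chain---is exactly the point that makes the paper's terse one-line proof legitimate.
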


\begin{proof} \vsb
The proof is by iterative application of Theorem \ref{thm:rec-V} and application of Lemma~\ref{lem:add}  to the sum $\Gamma(X)= D_{(X_n,v_n)}+S_{(X_n,v_n)}$.
\end{proof}
\smallskip

\begin{example} \label{eg:W4-chain}
Let $(G,t)$ be the 4-wheel with a root vertex inserted at the midpoint of a rim edge.  Let $(H,u,v)$ be the 4-wheel with a vertex inserted at the midpoint of each of two non-adjacent rim edges, as illustrated in Figure \ref{fig:W4*W4}.    This example was previously discussed by Stahl \cite{Stah97}.  \smallskip\vskip4pt

\begin{figure} [ht]
\centering  \vsb
    \includegraphics[width=5in]{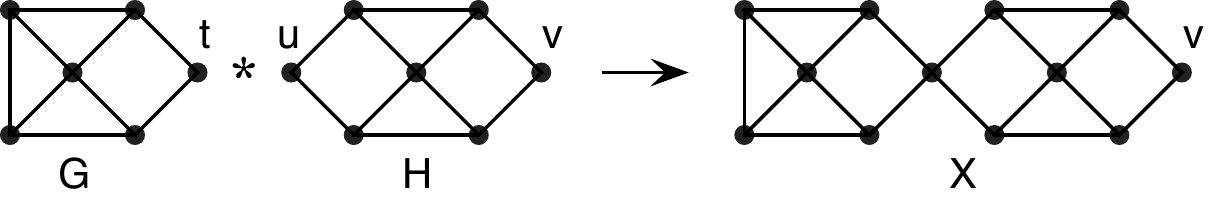}  \vsb
\caption{Starting a $W_4$-chain with vertex-amalgamation.}
\label{fig:W4*W4}
\end{figure}  

The vertex-rooted graph $(G,t)$ has the partitioned genus distribution \vsa\vsc
$$ D_{(G,t)} \,=\, (2,\, 44) \rmand  S_{(G,t)} \,=\, (0,\, 14,\, 36).  $$

The doubly vertex-rooted graph $(H,u,v)$ has the partitioned genus distribution
\begin{alignat*}{3}
DD^0_{(H,u,v)}&=(0),\qquad&
DD'_{(H,u,v)}&=(2,20), \qquad&
DD''_{(H,u,v)}&=(0,12),\\
DS^0_{(H,u,v)}&=(0,4),\qquad&
DS'_{(H,u,v)}&=(0,8),&&\\
SD^0_{(H,u,v)}&=(0,4),\qquad&
SD'_{(H,u,v)}&=(0,8),&&\\
SS^0_{(H,u,v)}&=(0),\qquad&
SS^1_{(H,u,v)}&=(0,0,24),\qquad&
SS^2_{(H,u,v)}&=(0,2,12).
\end{alignat*}
We may group for convenience.
\begin{alignat*}{2}
DD_{(H,u,v)}&=(2,32),\qquad&
DS_{(H,u,v)}&=(0,12),\\
SD_{(H,u,v)}&=(0,12),\qquad&
SS_{(H,u,v)}&=(0,2,36).
\end{alignat*}
By the definition~(\ref{def:AB:V}),
it is straightforward to calculate
\[
A_1=(0,16,22),\qquad
A_2=(2,44),\qquad
B_1=(0,0,44),\qquad
B_2=(0,14,36).
\]
None of them has internal zeros.
Since any inequality of the form ${0\over0}\le{x\over y}$ is considered to be true,
the lexicographical conditions in~\eqref{cond:vertex:ls} reduce to
\[
\frac{b_{2,0}}{a_{2,0}} \,\le\,
\frac{b_{1,1}}{a_{1,1}} \,\le\,
\frac{b_{2,1}}{a_{2,1}} \,\le\,
\frac{b_{1,2}}{a_{1,2}} \,\le\,
\frac{b_{2,2}}{a_{2,2}}\quad\text{ and }\quad
\frac{a_{2,0}}{ b_{2,1}} \,\le\,
\frac{a_{1,1}}{ b_{1,2}}\,\le\,
\frac{a_{2,1}}{b_{2,2}}\,\le\,
\frac{a_{1,2}}{b_{1,3}},
\]
which we verify as
$$
{0\over2}\,\le\,
{0\over16}\,\le\,
\frac{14}{44} \,\le\,
\frac{44}{32} \,\le\,
\frac{36}{0}
\quad\text{ and }\quad
\frac{2}{14}\,\le\,
\frac{16}{44} \,\le\,
\frac{44}{36}\,\le\,
{22\over0}.
$$
Thus, we anticipate from Theorem  \ref{thm:rec-V} that  $D_{(X,v)} \ls S_{(X,v)}$ and, accordingly, that $\Gamma_{(X,v)}$ is log-concave.  Using Theorem \ref{thm:chain-V}, we calculate
\begin{align*}
D_{(X,v)} &= (16,\, 936,\, 13408,\, 12320),\\
S_{(X,v)} &= (0,\, 144,\, 4776,\, 15552, 7776),\\
\Gamma(X) &= (16,\, 1080,\, 18184,\, 27872,\, 7776).
\end{align*}
The condition that $D_{(X,v)}\ls S_{(X,v)}$ is verified as follows:
\[
{16\over0}
\ge{144\over0}
\ge{936\over 16}
\ge{4776\over144}
\ge{13408\over 936}
\ge{15552\over 4776}
\ge{12320\over 13408}
\ge{7776\over 15552}
\ge{0\over12320}.
\]
It is easy to verify that $\Gamma(X)$ is indeed log-concave.

Note that any inequality of the form ${0\over0}\le{x\over y}$ is considered to be true.
Therefore, as a consequence of Corollary~\ref{cor:V-chain}, despite the Liu-Wang disproof of  Stahl's conjecture that the roots of the genus polynomials of all these $W_4$-chains are real, we conclude that every $W_4$-chain constructed by iterative vertex-amalgamation has a log-concave genus distribution.  Moreover, the single-root partials of every $W_4$-chain are log-concave and synchronized.
\end{example}
\smallskip

\begin{example} \label{eg:ML4-chain}
This time, let $(G,t)$ be the M\"obius ladder $ML_4$ with a root-vertex created at the midpoint of any edge.   Let $(H,u,v)$ be $ML_4$ with two root-vertices created at the midpoints of antipodal edges of $K_4$, as illustrated in Figure \ref{fig:ML4-chain}.

\begin{figure} [ht]
\centering  \vsb
    \includegraphics[width=2.5in]{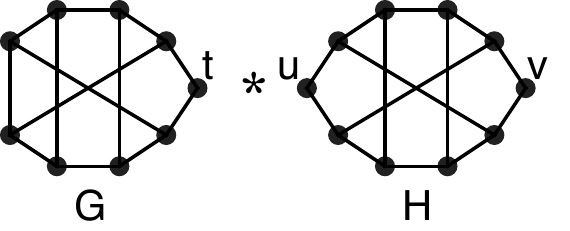}  \vsb
\caption{Ingredients for an $ML_4$-chain with iterative vertex-amalgamation.}
\label{fig:ML4-chain}
\end{figure}
The vertex-rooted graph $(G,t)$ has the partitioned genus distribution
\[
D_{(G,t)} \,=\, (0,\, 48,\, 96)
\rmand
S_{(G,t)} \,=\, (0,\, 8,\, 104).
\]
The doubly vertex-rooted graph $(H,u,v)$ has the partitioned genus distribution
\begin{alignat*}{3}
DD^0_{(H,u,v)}&=(0,8),\qquad&
DD'_{(H,u,v)}&=(0,20), \qquad&
DD''_{(H,u,v)}&=(0,14,\, 48),\\
DS^0_{(H,u,v)}&=(0,2),\qquad&
DS'_{(H,u,v)}&=(0,4,48),&&\\
SD^0_{(H,u,v)}&=(0,2),\qquad&
SD'_{(H,u,v)}&=(0,4,48),&&\\
SS^0_{(H,u,v)}&=(0,0,8),\qquad&
SS^1_{(H,u,v)}&=(0,0,28),\qquad&
SS^2_{(H,u,v)}&=(0,2,20).
\end{alignat*}
We may group for convenience.
\begin{alignat*}{2}
DD_{(H,u,v)}&=(0,42,48),\qquad&
DS_{(H,u,v)}&=(0,6,48),\\
SD_{(H,u,v)}&=(0,6,48),\qquad&
SS_{(H,u,v)}&=(0,2,56).
\end{alignat*}
By Definition~(\ref{def:AB:V}),
we calculate
$$
A_1=(0,\,8,\,96),\quad
A_2=(0,\,48,\,96),\quad
B_1=(0,\,0,\,54,\,96),\quad
B_2=(0,\,8,\,104).
$$
None of them has internal zeros. Moreover, in this case, the lexicographical conditions in~(\ref{cond:vertex:ls}) reduce to
\[
\frac{b_{1,1}}{a_{1,1}} \,\le\,
\frac{b_{2,1}}{a_{2,1}} \,\le\,
\frac{b_{1,2}}{a_{1,2}} \,\le\,
\frac{b_{2,2}}{a_{2,2}} \,\le\,
\frac{b_{1,3}}{a_{1,3}}\quad\text{ and }\quad
\frac{a_{2,0}}{ b_{2,1}} \,\le\,
\frac{a_{1,1}}{ b_{1,2}}\,\le\,
\frac{a_{2,1}}{b_{2,2}} \,\le\,
\frac{a_{1,2}}{ b_{1,3}} \,\le\,
\frac{a_{2,2}}{b_{2,3}}.
\]
which we verify as
$$ \frac{0}{8} \,\le\,
\frac{8}{48} \,\le\,
\frac{54}{96} \,\le\,
\frac{104}{96} \,\le\,
\frac{96}{0}  \quad\text{ and }\quad
\frac{0}{8} \,\le\,
\frac{8}{54} \,\le\,
\frac{48}{104} \,\le\,
{96\over96} \,\le\,
{104\over0}.
$$
Using Theorem \ref{thm:chain-V}, we calculate
\begin{align*}
D_{(X,v)} &= (0,\, 0,\, 12288,\, 82176,\, 115200),\\
S_{(X,v)} &= (0,\, 0,\, 2304,\, 43200,\, 128256,\, 9216),\\
\Gamma(X) &=\, (0,\, 0,\, 14592,\, 125376,\, 243456,\, 9216).
\end{align*}
It is again easy to verify that $D_{(X,v)}\ls S_{(X,v)}$, and that $\Gamma(X)$ is log-concave.  We conclude that every $ML_4$-chain constructed by iterative vertex-amalgamation has log-concave genus distribution and log-concave single-root partial genus distributions.
\end{example}
\smallskip

\begin{example}
By combining Example \ref{eg:W4-chain} and Example \ref{eg:ML4-chain}, we see that if $X$ is a vertex-amalgamation chain of copies of $W_4$ and $ML_4$, interspersed arbitrarily with each other, then the genus distribution $\Gamma(X)$ is log-concave.  Moreover, the single-root partials are log-concave and synchronized.
\end{example}

\smallskip
\subsection{Iterative amalgamation at edge-roots} \label{sec:iter-E}  

This section presents the edge amalgamation analogy to the vertex-amalgamation discussion in \S\ref{sec:iter-V}.  When a graph has two edge-roots and both endpoints of both edge-roots are $2$-valent, the partitioning is similar to the case of two $2$-valent vertex-roots.  However, the recursions used for constructing linear chains of copies of a graph have different coefficients.  Definitions of the double-edge-rooted partials are given in \cite{PoKhGr10}. A key difference from vertex-amalgamation is that the two ways of merging two root edges can lead to non-isomorphic graphs with the same partial genus distributions.

\begin{thm}  \label{thm:chain-E}
{\allowdisplaybreaks
Let $(G, e)$ be a single-edge-rooted graph and $(H, g, f)$ a double-edge-rooted graph, where each edge-root has two $2$-valent endpoints.  Let $(W,f)$ be the single edge-rooted graph obtained from the disjoint union $G\sqcup H$ by merging edge~$e$ with edge~$g$.  Then the following recursions hold true:
\begin{eqnarray}
 D_{(W,f)} &=& \hskip3pt 2D_{(G,e)}*DD_{H(g,f)} \,+\, 2D_{(G,e)}*DD^{0+}_{H(g,f)} \\
&&+\,  2D_{(G,e)}*DD'^+_{H(g,f)}  \,+\, 4D_{(G,e)}*SD_{H(g,f)}   \notag\\
&&+\,  2D_{(G,e)}*SS^2_{H(g,f)}  \,+\, 4S_{(G,e)}*DD_{H(g,f)}  \notag\\
&&+\, 4S_{(G,e)}*SD'_{H(g,f)},  \notag\\
S_{(W,f)} &=& \hskip3pt  2D_{(G,e)}*DD''^{+}_{H(g,f)} \,+\, 2D_{(G,e)}*DS_{H(g,f)} \\
&&+\, 2D_{(G,e)}*DS^{+}_{H(g,f)} \,+\, 4D_{(G,e)}*SS^{0}_{H(g,f)}  \notag\\
&&+\, 4D_{(G,e)}*SS^{1}_{H(g,f)}  \,+\, 2D_{(G,e)}*SS^2_{H(g,f)}  \notag\\
&&+\, 4S_{(G,e)}*DS_{H(g,f)}  \,+\, 4S_{(G,e)}*SS_{H(g,f)}. \notag
 \end{eqnarray}
}
\end{thm}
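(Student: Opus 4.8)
The plan is to establish the recursions by the standard production analysis of topological graph theory, in exact parallel to the vertex-amalgamation case of Theorem~\ref{thm:chain-V}. First I would fix the combinatorial model: an embedding is a rotation system, and an embedding of $W$ corresponds to a pair consisting of an embedding of $(G,e)$ and an embedding of $(H,g,f)$, together with a choice of how the local rotations at the two $2$-valent endpoints of the root edge are spliced when $e$ is identified with $g$. Because each root edge has two $2$-valent endpoints, the local rotation data at the amalgamation is small and completely enumerable; the distinct admissible splicings are precisely what produce the integer coefficients $2$ and $4$ appearing in the recursion.

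Next I would carry out the face-tracing. For each of the two possible behaviors of $G$ at $e$ (two distinct face-boundary walks, of type $D_{(G,e)}$, versus a single doubly-incident walk, of type $S_{(G,e)}$) and each of the refined double-root types of $H$ at $(g,f)$ --- namely $DD^0$, $DD'$, $DD''$, $DS^0$, $DS'$, $SD^0$, $SD'$, $SS^0$, $SS^1$, $SS^2$ --- I would determine how the walks incident on the shared edge combine, and hence (i) whether the surviving root $f$ of $W$ lies on two distinct walks (contributing to $D_{(W,f)}$) or on one doubly-incident walk (contributing to $S_{(W,f)}$), and (ii) the Euler-characteristic change of the amalgamation. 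This yields a production table whose rows, summed according to the resulting type at $f$, give exactly the right-hand sides of the two displayed recursions.

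The genus bookkeeping is handled by convolution together with the offset operation. Since amalgamation makes the genus additive over the two amalgamands, each production contributes a convolution $D_{(G,e)}*(\cdot)$ or $S_{(G,e)}*(\cdot)$ of the relevant $H$-partial. In those splicings where identifying two face-boundary walks raises the genus by one, the corresponding $H$-partial is replaced by its offset sequence (indicated by the superscript $+$), in the sense $a_k^+=a_{k-1}$ of the offset sequence defined in Proposition~\ref{prop:+}; this is why the terms $DD^{0+}$, $DD'^+$, $DD''^+$, and $DS^+$ occur. Collecting the productions by their output type then reproduces the stated formulas for $D_{(W,f)}$ and $S_{(W,f)}$.

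The main obstacle is the exhaustive and error-free face-tracing across all ten double-root types of $H$ crossed with the two single-root types of $G$: getting every coefficient and every offset exactly right, and verifying that no embedding of $W$ is counted twice or omitted. A subtlety specific to the edge case --- absent from the vertex case --- is that the two ways of merging two root edges can yield non-isomorphic graphs sharing the same partial genus distributions, so the coefficients here differ from those of Theorem~\ref{thm:chain-V}; I would need to check that each admissible splicing carries the correct multiplicity. As in the vertex case, the cleanest route is to invoke the production framework already developed for double-edge-rooted partials in \cite{PoKhGr10} and read off the recursion from it, rather than re-deriving each production from scratch.
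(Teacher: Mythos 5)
Your proposal is correct and ends up exactly where the paper does: the paper's entire proof is the one-line observation that the theorem is a corollary of Theorems 3.2, 3.3, and 3.4 (collectively) of \cite{PoKhGr10}, which is precisely the ``cleanest route'' you recommend in your final paragraph. The face-tracing/splicing analysis you sketch is a fair description of how those productions are derived in \cite{PoKhGr10}, but it is not re-derived in this paper and need not be.
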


\begin{proof}  \vsc
This theorem is a corollary of Theorems 3.2, 3.3, and 3.4 (collectively) of \cite{PoKhGr10}.
\end{proof}

It is easy to derive the next result, analogous to Theorem~\ref{thm:rec-V}, if one notices that

\begin{align*}
\noalign{\vsa\vsb}
D_{(W,f)}&=~ 2D_{(G,e)}*A_1+(2D_{(G,e)}+4S_{(G,e)})*A_2,\\
S_{(W,f)}&=~ 2D_{(G,e)}*B_1+(2D_{(G,e)}+4S_{(G,e)})*B_2.
\end{align*}

\begin{thm} \label{thm:rec-E}
Let $(G,e)$ be an edge-rooted graph such that $D_{(G,e)} \lesssim S_{(G,e)}$.
Let $(H,g,f)$ be a doubly edge-rooted graph. We introduce the following abbreviations:
\begin{equation}\label{def:AB:E}
\begin{aligned}
A_1\hsb\ &= DD^{0+}_{(H,g,f)}+\hsc DD'^+_{(H,g,f)}+\hsc\ SS^2_{(H,g,f)} +\hsc SD^0_{(H,g,f)} +\hsc SD_{(H,g,f)},\\
A_2\hsb\ &= SD'_{(H,g,f)} +\hsc DD_{(H,g,f)},\\
B_1\hsb\ &= DS^+_{(H,g,f)} +\hsc DD''^+_{(H,g,f)} +\hsc SS^0_{(H,g,f)} +\hsc SS^1_{(H,g,f)}, \\
B_2\hsb\ &= SS_{(H,g,f)} +\hsc DS_{(H,g,f)}.
\end{aligned}
\end{equation}
Suppose that
\begin{equation} \label{cond:edge:ls}
{b_{1,t}\over a_{1,t}}\le{b_{2,t}\over a_{2,t}}\le{b_{1,t+1}\over a_{1,t+1}}
\quad\text{ and }\quad
{a_{1,t-1}\over b_{1,t}}\le{a_{2,t-1}\over b_{2,t}}\le{a_{1,t}\over b_{1,t+1}} \fa{t}.
\end{equation}
Then the partial genus distributions  $D_{(W,f)}$ and $S_{(W,f)}$ of the edge-rooted graph $(W,f)$, obtained from $G\sqcup H$ by merging edges $e$ and $g$, are both log-concave;  moreover,
we have $D_{(W,f)}\lesssim S_{(W,f)}$.  \qed
\end{thm}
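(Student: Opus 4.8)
The plan is to follow the proof of Theorem~\ref{thm:rec-V} line for line, since the edge-amalgamation recursions of Theorem~\ref{thm:chain-E} collapse into exactly the same two-term convolution-sum shape as in the vertex case. First I would record the decomposition stated immediately before the theorem, namely that regrouping according to~\eqref{def:AB:E} yields
\begin{align*}
D_{(W,f)}&=2D_{(G,e)}*A_1+(2D_{(G,e)}+4S_{(G,e)})*A_2,\\
S_{(W,f)}&=2D_{(G,e)}*B_1+(2D_{(G,e)}+4S_{(G,e)})*B_2.
\end{align*}
Writing $W_1=2D_{(G,e)}$ and $W_2=2D_{(G,e)}+4S_{(G,e)}$, these read $D_{(W,f)}=\sum_{i=1}^2 W_i*A_i$ and $S_{(W,f)}=\sum_{i=1}^2 W_i*B_i$, which is precisely the left-hand and right-hand sides of Relation~\eqref{eq0} in Theorem~\ref{thm:sumclvls} for $n=2$.

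Next I would verify the two hypotheses of Theorem~\ref{thm:sumclvls} for $\W_2=(W_1,W_2)$. The lexicographicity of $b_{i,t}/a_{i,t}$ and of $a_{i,t-1}/b_{i,t}$ is, for $n=2$, exactly the pair of iterated inequalities assumed in~\eqref{cond:edge:ls}, so nothing new is needed there. It remains to check $\W_2\in\ltLC$, that is, $W_1\ls W_2$. Since $D_{(G,e)}\ls S_{(G,e)}$ forces $D_{(G,e)}$ to be log-concave, we have $D_{(G,e)}\ls D_{(G,e)}$, and together with the hypothesis $D_{(G,e)}\ls S_{(G,e)}$ this gives $\A_1\ls\B_2$ for $\A_1=(D_{(G,e)})$ and $\B_2=(D_{(G,e)},S_{(G,e)})$. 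Applying Theorem~\ref{thm:sumls} with coefficient $2$ on the left and coefficients $(2,4)$ on the right then yields
\[
2D_{(G,e)}\ls 2D_{(G,e)}+4S_{(G,e)},
\]
which is exactly $W_1\ls W_2$.

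With both hypotheses in hand, Theorem~\ref{thm:sumclvls} delivers $\sum_{i=1}^2 W_i*A_i\ls\sum_{i=1}^2 W_i*B_i$, i.e.\ $D_{(W,f)}\ls S_{(W,f)}$. Finally, since ratio-dominance implies synchronicity and synchronicity implies the log-concavity of each participating sequence, both $D_{(W,f)}$ and $S_{(W,f)}$ are log-concave, completing the argument. I expect the only real friction to be bookkeeping rather than anything deep: one must confirm that the edge coefficients of Theorem~\ref{thm:chain-E}, after using $SD=SD^0+SD'$ (and the corresponding grouping on the $S$-side), genuinely regroup onto $W_1$ and $W_2$ as claimed, and one must note that Theorem~\ref{thm:sumclvls} requires every $A_i$, $B_i$, and $W_i$ to be free of internal zeros --- a property not known a priori for partial genus distributions, so it enters here exactly as in the vertex case and must be checked for the specific amalgamands.
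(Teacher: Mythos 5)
Your proposal is correct and follows essentially the same route as the paper: the paper states the regrouped decomposition $D_{(W,f)}=2D_{(G,e)}*A_1+(2D_{(G,e)}+4S_{(G,e)})*A_2$, $S_{(W,f)}=2D_{(G,e)}*B_1+(2D_{(G,e)}+4S_{(G,e)})*B_2$ just before the theorem and leaves the rest to the argument of Theorem~\ref{thm:rec-V}, which is exactly what you carry out --- deducing $2D_{(G,e)}\ls 2D_{(G,e)}+4S_{(G,e)}$ from Theorem~\ref{thm:sumls} and then applying Theorem~\ref{thm:sumclvls} with $n=2$, whose lexicographic hypotheses coincide with~\eqref{cond:edge:ls}. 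Your closing caveats (verifying the regrouping via $SD=SD^0+SD'$ and $SS=SS^0+SS^1+SS^2$, and the no-internal-zeros hypothesis) are sound and consistent with how the paper handles these points in its examples.
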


We observe that Relation~(\ref{cond:vertex:ls}) and Relation~(\ref{cond:edge:ls})
are of the same form.
\smallskip

\begin{cor} \label{cor:E-chain}
Let $(G,e)$ be an edge-rooted graph such that the partial distributions $D_{(G,t)}$ and $S_{(G,t)}$ are log-concave and that $D_{(G,e)} \lesssim S_{(G,e)}$.
Let $\bigl((H_k,u_k,v_k)\bigr)_{k=1}^n$ be a sequence of doubly edge-rooted graphs whose partial genus distributions are all log-concave and satisfy Relation \eqref{cond:edge:ls}.  Then the iteratively edge-amalgamated graph
$$(W_n,f_n) \,=\, (G,e)*(H_1,g_1,f_1)*\cdots*(H_n,g_n,f_n)$$
has log-concave partial genus distributions  $D_{(W_n,f_n)}$ and $S_{(W_n,f_n)}$, and $D_{(W_n,f_n)}\lesssim S_{(W_n,f_n)}$.  Moreover, the genus distribution $\Gamma(W)$ is log-concave.
\end{cor}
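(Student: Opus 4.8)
The plan is to mirror the proof of Corollary~\ref{cor:V-chain}: run a straightforward induction on the number $n$ of amalgamands, with Theorem~\ref{thm:rec-E} serving as the single-step engine, and then extract log-concavity of the total genus distribution from the ratio-dominance of the two surviving partials. The essential observation is that the relation $D \lesssim S$ occurs both as the hypothesis and as the conclusion of Theorem~\ref{thm:rec-E}, so it is self-propagating along the chain.

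First I would set up the induction. Write $(W_0,f_0)=(G,e)$ and, for $1\le k\le n$, let $(W_k,f_k)$ be the single-edge-rooted graph obtained by amalgamating $(W_{k-1},f_{k-1})$ with $(H_k,g_k,f_k)$ along the edges $f_{k-1}$ and $g_k$, so that $(W_n,f_n)$ is precisely the iterated amalgamation in the statement. The inductive claim at stage $k$ is that $D_{(W_k,f_k)}$ and $S_{(W_k,f_k)}$ are log-concave (and, per the paper's standing assumption, free of internal zeros) and that $D_{(W_k,f_k)}\lesssim S_{(W_k,f_k)}$. The base case $k=0$ is exactly the hypothesis imposed on $(G,e)$. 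For the inductive step I would apply Theorem~\ref{thm:rec-E} with the substitution $(G,e)\mapsto(W_{k-1},f_{k-1})$ and $(H,g,f)\mapsto(H_k,g_k,f_k)$: the premise $D_{(W_{k-1},f_{k-1})}\lesssim S_{(W_{k-1},f_{k-1})}$ is supplied by the inductive hypothesis, while the lexicographic conditions~\eqref{cond:edge:ls} on the abbreviations $A_1,A_2,B_1,B_2$ formed from the double-root partials of $H_k$ are supplied by the assumption that each $(H_k,g_k,f_k)$ satisfies Relation~\eqref{cond:edge:ls}. The conclusion of Theorem~\ref{thm:rec-E} then returns exactly the inductive claim at stage $k$, and because the resulting graph $(W_k,f_k)$ is again single-edge-rooted with a $2$-valent-endpoint root edge, the next application of Theorem~\ref{thm:rec-E} is legitimate. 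Iterating down to $k=n$ yields both that $D_{(W_n,f_n)}$ and $S_{(W_n,f_n)}$ are log-concave and that $D_{(W_n,f_n)}\lesssim S_{(W_n,f_n)}$.

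Finally, for the total genus distribution I would use $\Gamma(W)=D_{(W_n,f_n)}+S_{(W_n,f_n)}$. Since ratio-dominance implies synchronicity, the last stage gives $D_{(W_n,f_n)}\sim S_{(W_n,f_n)}$, whence the triple $\bigl(D_{(W_n,f_n)},S_{(W_n,f_n)},S_{(W_n,f_n)}\bigr)\in\tLC$. Applying Lemma~\ref{lem:add} to this triple shows that $D_{(W_n,f_n)}+S_{(W_n,f_n)}=\Gamma(W)$ is synchronized with $S_{(W_n,f_n)}$, and in particular log-concave. I expect no real obstacle here, since Theorem~\ref{thm:rec-E} (and beneath it Theorem~\ref{thm:sumclvls}) carries all the substantive content; the only points demanding care are bookkeeping — confirming that each amalgamation step genuinely produces a single-edge-rooted graph eligible for the next step, and that the no-internal-zeros property is maintained throughout, which is automatic under log-concavity given the paper's conventions.
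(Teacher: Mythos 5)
Your proposal is correct and takes essentially the same approach as the paper: the paper's one-line proof is exactly an iterative application of the single-step amalgamation theorem (cited there as Theorem~\ref{thm:rec-V}, apparently a typo for Theorem~\ref{thm:rec-E}) followed by Lemma~\ref{lem:add} applied to $\Gamma(W)=D_{(W_n,f_n)}+S_{(W_n,f_n)}$. Your write-up simply makes explicit the induction and the observation that $D\lesssim S$ yields the synchronicity needed to invoke Lemma~\ref{lem:add}.
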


\begin{proof} \vsb
The proof is by iterative application of Theorem \ref{thm:rec-V} and application of Lemma~\ref{lem:add}  to the sum $\Gamma(W)= D_{(W_n,f_n)}+S_{(W_n,f_n)}$.
\end{proof}
\smallskip

\begin{example}  \label{eg:K4-chain}
Let $(G,e)$ be the complete graph $K_4$ with a root-edge created as the middle segment of a trisection of any edge of $K_4$.   Let $(H,g,f)$ be $K_4$ with two root-edges created as the middle segments of non-adjacent edges of $K_4$, as illustrated in Figure \ref{fig:K4-chain}.

\begin{figure} [ht]
\centering  \vsb
    \includegraphics[width=5in]{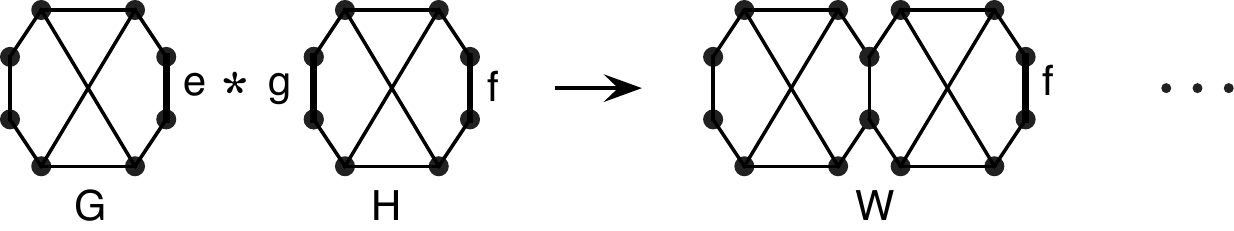}  \vsb
\caption{Forming a $K_4$-chain by iterative edge-amalgamation.}
\label{fig:K4-chain}
\end{figure}
\end{example} \vskip-12pt

The edge-rooted graph $(G,e)$ has the partitioned genus distribution
$$ D_{(G,e)}= (2,8)
\rmand
S_{(G,e)}= (0,6). $$
The doubly edge-rooted graph $(H,g,f)$ has the following non-zero partial genus distributions:
\[
DD^0_{(H,g,f)}= (2),\quad
DD''_{(H,g,f)}=SD'_{(H,g,f)}=DS'_{(H,g,f)}= (0,4),\quad
SS^2_{(H,g,f)}= (0,2).
\]
Using Theorem \ref{thm:chain-E}, we can calculate
\begin{align*}
D_{(W,f)}&=(8,\,144,\,448),\\
S_{(W,f)}&=(0,\,24,\,272,\,128),\\
\Gamma(W)&=(8,\,168,\,720,\,128).
\end{align*}
We easily verify that $\Gamma(W)$ is log-concave and that $D_{(W,f)}\ls S_{(W,f)}$.
By~(\ref{def:AB:E}), we can easily compute
$$
A_1=(0,8),\quad
A_2=(2,8),\quad
B_1=(0,0,8),\quad
B_2=(0,6).
$$
The lexicographical conditions in~(\ref{cond:edge:ls}) reduce to
\[
\frac{b_{2,0}}{a_{2,0}}\,\le\,
\frac{b_{1,1}}{a_{1,1}}\,\le\,
\frac{b_{2,1}}{a_{2,1}}\,\le\,
\frac{b_{1,2}}{a_{1,2}}\quad\text{ and }\quad
\frac{a_{2,0}}{b_{2,1}}\,\le\,
\frac{a_{1,1}}{b_{1,2}}\,\le\,
\frac{a_{2,1}}{b_{2,2}}.
\]
In this example, they are
\[
{0\over2}\,\le\,
{0\over8}\,\le\,
{6\over8}\,\le\,
{8\over0}\quad\text{ and }\quad
{2\over6}\,\le\,
{8\over8}\,\le\,
{8\over0}.
\]
We conclude that every $K_4$-chain constructed by iterating edge-amalgamations has a log-concave genus distribution.  Moreover, the single-root partials are log-concave and synchronized.

\begin{example}  \label{eg:circ7:1,2-chain}
Let $(G,e)$ be the circulant graph $circ(7\,$:$\,1,2)$ with root-edges created as middle segments of  trisections of edges, as shown in Figure \ref{fig:circ7:1,2-chain}. Then the edge-rooted graph $(G,e)$ has the partitioned genus distribution
$$ D_{(G,e)}= (0,\,492,\,25642,\,120960) \rmand S_{(G,e)}= (0,\,0,\,2694,\,61352,\,68796). $$

\begin{figure} [ht]
\centering  \vsb
    \includegraphics[width=3in]{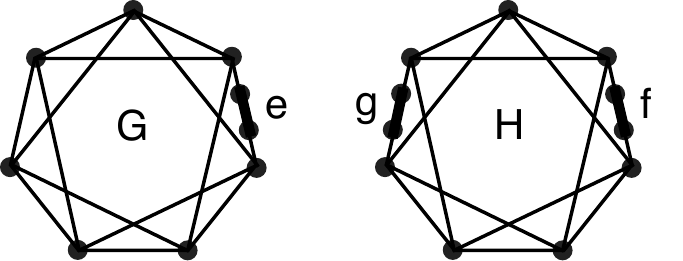}  \vsb
\caption{The circulant graph $circ(7\,$:$\,1,2)$ with root-edges created as shown.}
\label{fig:circ7:1,2-chain}
\end{figure}
\end{example}

The doubly edge-rooted graph $(H,g,f)$ has the following non-zero partial genus distributions:
\begin{xalignat*}{2}
DD^0_{(H,g,f)}&=(0,\,382,\,9296),&
DD'_{(H,g,f)}&=(0,\,110,\,12564,\,53476),\\
DD''_{(H,g,f)}&=(0,\,0,\,1162,\,24400),&
DS^0_{(H,g,f)}&=(0,\,0,\,1476,\,8740),\\
DS'_{(H,g,f)}&=(0,\,0,\,1144,\,34344),&
SD^0_{(H,g,f)}&=(0,\,0,\,1476,\,8740),\\
SD'_{(H,g,f)}&=(0,\,0,\,1144,\,34344),&
SS^0_{(H,g,f)}&=(0,\,0,\,0,\,3584),\\
SS^1_{(H,g,f)}&=(0,\,0,\,0,\,7268,\,39328),&
SS^2_{(H,g,f)}&=(0,\,0,\,74,\,7416,\,29468).
\end{xalignat*}
Using definition~(\ref{def:AB:E}), we can compute
\begin{xalignat*}{2}
A_1 &=(0,\,0,\,4662,\,81100,\,82944),&
A_2 &= (0,\,492,\,24166,\,112220), \\
B_1 &= (0,\,0,\,0,\,14634,\,106812),&
B_2 &= (0,\,0,\,2694,\,61352,\,68796).
\end{xalignat*}
The lexicographical conditions in~(\ref{cond:edge:ls}) reduce to
\[
\begin{split}
&\frac{b_{2,1}}{a_{2,1}}\,\le\,
\frac{b_{1,2}}{a_{1,2}}\,\le\,
\frac{b_{2,2}}{a_{2,2}}\,\le\,
\frac{b_{1,3}}{a_{1,3}}\,\le\,
\frac{b_{2,3}}{a_{2,3}}\,\le\,
\frac{b_{1,4}}{a_{1,4}}\,\le\,
\frac{b_{2,4}}{a_{2,4}},\\[3pt]
&\frac{a_{2,1}}{b_{2,2}}\,\le\,
\frac{a_{1,2}}{b_{1,3}}\,\le\,
\frac{a_{2,2}}{b_{2,3}}\,\le\,
\frac{a_{1,3}}{b_{1,4}}\,\le\,
\frac{a_{2,3}}{b_{2,4}}\,\le\,
\frac{a_{1,4}}{b_{1,5}}.
\end{split}
\]
and they can be verified as
\begin{alignat*}{6}
\frac{0}{492}&\le
\frac{0}{4662}&\le
\frac{2694}{24166}&\le
\frac{14634}{81100}&\le
\frac{61352}{112220}&\le
\frac{106812}{82944}&\le
\frac{68796}{0},\\[4pt]
\frac{492}{2694}&\le
\frac{4662}{14634}&\le
\frac{24166}{61352}&\le
\frac{81100}{106812}&\le
\frac{112220}{68796}&\le
\frac{82944}{0}.
\end{alignat*}
We conclude that every one of these $circ(7\,$:$\,1,2)$-chains has a log-concave genus distribution. Moreover, the single-root partials are log-concave and synchronized.  This example illustrates that the properties we need to apply these new methods are not restricted to very small graphs.

\medskip
\section{\large Conclusions} \label{sec:concl}

We have introduced some new methods for proving the log-concavity of a linear combination of log-concave sequences and of log-concave sequences that have been transformed by convolutions.  We have used these methods to show that linear chains of graphs that satisfy certain conditions known to be true of many graphs, and which are possibly true for all graphs, have log-concave genus distributions.  This motivates further study and development of these new methods for application to proving the log-concavity of the genus distributions of larger classes of graphs.

We have proved that, given a collection of graphs that are doubly vertex-rooted or doubly edge-rooted, whose partitioned genus distributions satisfy conditions given in Corollary \ref{cor:V-chain} or Corollary \ref{cor:E-chain}, respectively, a linear chain formed from those graphs by iterative amalgamation has a log-concave genus distribution and log-concave partial genus distributions, as well.

We offer two restricted forms of the log-concavity conjecture for genus distributions of graphs. For Conjecture \ref{conj:V-self}, the productions in Table 2.1 of \cite{Gr11a} lead to an expression for $\Gamma(X)$ as a linear combination of double-root partials and some of their offset sequences.  For Conjecture \ref{conj:E-self}, Theorems 2.6 and 2.7 of \cite{PoKhGr12} give productions for the two ways to self-amalgamate a doubly edge-rooted graph.

\begin{conj} \label{conj:V-self}
Let $(H,u,v)$ be a doubly vertex-rooted graph with 2-valent roots.  Then the genus distribution of the graph $X$ formed by amalgamating the vertex-roots $u$ and $v$ is log-concave.
\end{conj}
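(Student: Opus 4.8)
The plan is to reduce the log-concavity of $\Gamma(X)$ to the synchronicity criterion already developed in \S\ref{sec:LC}. First I would invoke the productions in Table 2.1 of \cite{Gr11a} to write the genus distribution of the self-amalgamated graph as a nonnegative linear combination
\[
\Gamma(X) \;=\; \sum_i c_i P_i, \qquad c_i \ge 0,
\]
where each sequence $P_i$ is either one of the ten double-root partials $DD^0_{(H,u,v)}, DD'_{(H,u,v)}, \ldots, SS^2_{(H,u,v)}$ of $(H,u,v)$ or an offset $Q^+$ of such a partial $Q$. The offsets are forced into the combination because certain ways of joining the two face-boundary walks at the merged $4$-valent root raise the genus by one, which is precisely the operation $A\mapsto A^+$ of \S2.5. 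By Theorem~\ref{thm:lincomb}, applied with $v_i=u_i=c_i$, together with the fact that synchronicity implies log-concavity, it then suffices to prove that the collection $(P_i)$ lies in $\tLC$; that is, that its members are pairwise synchronized.

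Second, I would decouple the problem of synchronizing the offset-augmented collection from the problem of synchronizing the base partials. Proposition~\ref{prop:+} supplies $A\ls A^+$ and shows that offsetting preserves ratio-dominance, since $A\ls B \Leftrightarrow A^+\ls B^+$, so a natural route is to arrange all the base partials and the offsets actually occurring in the combination into a single ratio-dominance chain $Q_1 \ls Q_2 \ls \cdots \ls Q_m$ with $Q_1 \sim Q_m$; by Theorem~\ref{thm:ls} such a chain is automatically pairwise synchronized and hence lies in $\tLC$. The interleaving of a partial $Q$ and its offset $Q^+$ into this chain is governed exactly by Proposition~\ref{prop:+}, tempered by the cautionary remark that $A\ls B$ does \emph{not} yield $A\sim B^+$; so the ordering must be chosen so that no original partial is compared directly against an unrelated offset.

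Third --- and this is where the genuine difficulty lies --- I would have to establish the base ratio-dominance, or at least the synchronicity, relations among $DD$, $DS$, $SD$, $SS$ and their refinements for a \emph{completely arbitrary} doubly vertex-rooted graph $(H,u,v)$. These are precisely the relations that the examples of \S\ref{sec:iter-V} verify case by case but that are not known in general: no structural reason is presently available to force, for instance, $DS_{(H,u,v)}$ and $SD_{(H,u,v)}$ to be synchronized for every $H$. The most promising attack is an induction on a construction of $H$ by the same vertex- and edge-amalgamation operations, using Theorem~\ref{thm:convo} and Corollary~\ref{cor:ls:syn} to propagate synchronicity and ratio-dominance from the amalgamands up to $H$, seeding the induction at primitive graphs whose partials can be computed directly. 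The obstruction is that the self-amalgamation productions mix partials carrying different offsets, so maintaining a clean ratio-dominance chain through the recursion appears to require an additional hypothesis of lexicographic type, exactly as in Theorem~\ref{thm:sumclvls}. Supplying such a hypothesis --- or proving that it holds automatically for the partials of every graph --- is the crux that keeps this statement a conjecture rather than a theorem.
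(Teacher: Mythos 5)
This statement is Conjecture~\ref{conj:V-self}: the paper does not prove it, but poses it in \S\ref{sec:concl} as an open, restricted form of the general log-concavity conjecture, remarking only that the productions in Table~2.1 of \cite{Gr11a} express $\Gamma(X)$ as a nonnegative linear combination of the ten double-root partials of $(H,u,v)$ and some of their offset sequences. Your first step reproduces exactly that remark, and your second step correctly identifies the machinery (Theorem~\ref{thm:lincomb}, Proposition~\ref{prop:+}, Theorem~\ref{thm:ls}) that would convert pairwise synchronicity of those sequences into log-concavity of their sum. Up to that point you are on the track the authors themselves indicate.

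But your third step is not a proof, and you concede as much: no argument is given, nor is one known, that the partials $DD^0, DD', \ldots, SS^2$ of an \emph{arbitrary} doubly vertex-rooted graph, together with the offsets forced into the combination, are pairwise synchronized (or form a ratio-dominance chain, or satisfy the lexicographic conditions of Theorem~\ref{thm:sumclvls}). That missing ingredient is strictly stronger than Conjecture~\ref{conj:pgd-lc}, which asks only for log-concavity of the individual partials and is itself open. Your proposed induction over amalgamation operations cannot close the gap either: Theorem~\ref{thm:rec-V} and Corollary~\ref{cor:V-chain} propagate the needed relations only under the hypothesis~\eqref{cond:vertex:ls} imposed on every amalgamand --- a hypothesis the paper verifies example by example, not structurally --- and an arbitrary $(H,u,v)$ need not be assembled by such a chain at all. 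So what you have written is a reduction of the conjecture to another open statement, not a proof; the ``crux'' you name in your final sentence is precisely why the paper records this as a conjecture rather than a theorem.
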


\begin{conj} \label{conj:E-self}
Let $(H,e,f)$ be a doubly edge-rooted graph with 2-valent roots. Then the genus distributions of both the graphs that can formed by amalgamating the edge-roots $e$ and $f$ are log-concave.
\end{conj}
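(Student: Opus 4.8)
The plan is to adapt the strategy of Theorems \ref{thm:rec-V} and \ref{thm:rec-E} from chain amalgamation to self-amalgamation. First I would invoke Theorems 2.6 and 2.7 of \cite{PoKhGr12}, which handle the two ways of identifying the root edges, to express the genus distribution of each resulting graph as an explicit nonnegative linear combination of the ten double-root partial genus distributions $DD^0_{(H,e,f)}$, $DD'_{(H,e,f)}$, $\ldots$, $SS^2_{(H,e,f)}$ together with some of their offset sequences. A structural feature distinguishing this from the chain setting is decisive here: self-amalgamation involves only the single graph $H$, so no convolution of two independent sequences appears, and we are left with a \emph{plain} linear combination. This places the problem squarely within the scope of Theorem \ref{thm:lincomb} rather than the convolution machinery developed for chains.

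Given such an expression, the goal reduces to showing that the collection consisting of these double-root partials and the relevant offsets is pairwise synchronized, i.e.\ lies in $\tLC$; Theorem \ref{thm:lincomb} then yields at once that the linear combination is synchronized, hence log-concave. The offsets demand extra care. While Proposition \ref{prop:+} guarantees $A \ls A^+$ and the offset operation preserves log-concavity, the remark preceding Theorem \ref{thm:BC:AC+} warns that $A \ls B$ does \emph{not} imply $A \sim B^+$. So a second step is an auxiliary lemma certifying that the specific offsets produced by the productions can be adjoined to the family without destroying pairwise synchronicity; I would attempt this by tracking the particular ratio inequalities dictated by the combinatorial form of the productions, in the spirit of the hypothesis verification carried out in Examples \ref{eg:K4-chain} and \ref{eg:circ7:1,2-chain}.

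The hard part --- and the reason the statement is posed as a conjecture rather than proved --- is establishing that the double-root partials of an \emph{arbitrary} doubly edge-rooted graph are pairwise synchronized. There is no known structural reason forcing this: synchronicity of a graph's partials is an empirically observed but unproven phenomenon, intimately tied to the very log-concavity conjecture under attack. A genuine proof would require a topological-combinatorial analysis of how the successive ratios $\alpha_k$ of the partial-distribution terms evolve as the genus increases, presumably through face-tracing arguments that control the effect of rerouting face-boundary walks through the identified root edges. Absent such a structural theorem, the best available route is the one the examples illustrate: verify the synchronicity hypotheses case by case for individual graphs, and then apply the linear-combination criterion to deduce log-concavity for each.
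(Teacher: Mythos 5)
This statement is Conjecture \ref{conj:E-self} of the paper: it is posed as an open problem in \S\ref{sec:concl}, and the paper contains \emph{no proof} of it, so there is no proof to compare yours against --- only the paper's framing of the problem. Your reconstruction of the intended architecture is accurate and matches that framing: the remarks preceding the conjecture point to exactly the route you describe, namely using Theorems 2.6 and 2.7 of \cite{PoKhGr12} to write the genus distribution of each of the two self-amalgamated graphs as a nonnegative linear combination of the double-root partials of $(H,e,f)$ and some of their offset sequences (with no convolution, since only one amalgamand is present), after which Theorem \ref{thm:lincomb} would finish the argument provided all sequences entering the combination are pairwise synchronized.

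However, your proposal is not a proof, and you concede as much: the decisive step --- that the ten double-root partial genus distributions of an \emph{arbitrary} doubly edge-rooted graph, together with the offsets produced by the productions, form a pairwise synchronized family --- is precisely what is unknown. This is not a repairable local gap but the substance of the conjecture itself; note that even the paper's weakest related assertion, Conjecture \ref{conj:pgd-lc} (log-concavity and absence of internal zeros for partials), is open, and pairwise synchronicity is strictly stronger than that. Your caution about offsets is also warranted, since $A\ls B$ does not imply $A\sim B^+$, so adjoining offsets to a synchronized family requires additional ratio inequalities for which no general mechanism is known. In short, you have correctly identified the intended strategy and correctly located the obstruction, but the statement remains unproven in your proposal exactly as it remains unproven in the paper; what you have written is a sound research plan plus a case-by-case verification scheme (in the spirit of Examples \ref{eg:K4-chain} and \ref{eg:circ7:1,2-chain}), not a proof.
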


To the best of our knowledge, no one has found a graph whose genus distribution or partial genus distribution has internal zeros.  We have not encountered any graph with a non-log-concave partial genus distribution.  Based on these observations, we pose a third conjecture.
\smallskip

\begin{conj} \label{conj:pgd-lc}
Partial genus distributions for singly or doubly vertex-rooted or edge-rooted graphs are log-concave and have no internal zeros.
\end{conj}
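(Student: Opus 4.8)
The plan is to attack this by structural induction on the rooted graph, using the amalgamation recursions of \S\ref{sec:iter-V} and \S\ref{sec:iter-E} for the inductive step and the combinatorial apparatus of \S\ref{sec:LC} to propagate the relevant properties. The first move is to \emph{strengthen} the inductive hypothesis: instead of carrying only log-concavity and the absence of internal zeros, I would carry the full package of synchronicity and ratio-dominance relations among the ten double-root partials $dd^0,dd',dd'',ds^0,ds',sd^0,sd',ss^0,ss^1,ss^2$, together with the two lexicographic conditions~\eqref{lex:b/a} and~\eqref{lex:a/b} on the grouped sequences $A_1,A_2,B_1,B_2$ of~\eqref{def:AB:V} and~\eqref{def:AB:E}. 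The worked Examples~\ref{eg:W4-chain}--\ref{eg:circ7:1,2-chain} make clear that it is exactly these relations, and not bare log-concavity, that get verified and then fed into Theorem~\ref{thm:rec-V} or Theorem~\ref{thm:rec-E}, so they must be part of what the induction transports.

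With that hypothesis in place, the inductive step would run as follows. Each recursion expresses a partial of the composite graph as a nonnegative linear combination of convolutions $W_i*A_i$ of partials of the two amalgamands. Theorem~\ref{thm:pwconvo}, Corollary~\ref{cor:ls:syn}, and above all Theorem~\ref{thm:sumclvls} are precisely the instruments that convert ``the amalgamands satisfy the relations'' into ``the composite satisfies the relations,'' and Lemma~\ref{lem:add} then delivers log-concavity of the total distribution $\Gamma=D+S$ as a sum of two synchronized partials. The base cases---small rooted graphs such as rooted cycles, bouquets, and dipoles---are finite checks, and one would isolate a generating set of root-preserving operations (vertex- and edge-amalgamation, plus subdivision at a root) under which every graph in the target class is built.

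The hard part, and the reason this is posed as a conjecture rather than a theorem, is verifying that the lexicographic and ratio-dominance hypotheses of Theorem~\ref{thm:sumclvls} hold for the partials of an \emph{arbitrary} doubly-rooted amalgamand, rather than only for the specific families computed here: conditions~\eqref{cond:vertex:ls} and~\eqref{cond:edge:ls} couple the ten partials in a delicate order-theoretic fashion, and at present no topological or combinatorial mechanism is known that forces them uniformly---they are merely \emph{observed} to hold in every computed instance. A second, independent obstacle is the ``no internal zeros'' claim for partials which, unlike the interpolation theorem for full genus distributions, is not currently available even as a separate input and so cannot be invoked to pass from log-concavity to unimodality. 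I would therefore treat as a realistic first target the \emph{preservation} statement---that~\eqref{cond:vertex:ls} and~\eqref{cond:edge:ls} are inherited by amalgamation---which would close the induction within each chain and reduce the full conjecture to understanding the prime, non-amalgamable rooted graphs rather than all graphs at once.
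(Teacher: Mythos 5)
There is no proof to compare against: the statement you were given is Conjecture~\ref{conj:pgd-lc}, which the paper explicitly poses as an \emph{open} conjecture in \S\ref{sec:concl}, supported only by the empirical observation that no computed partial genus distribution has ever failed to be log-concave or exhibited an internal zero. Your proposal, to its credit, is honest about this --- you yourself flag that the lexicographic hypotheses are ``merely observed'' and that the inductive machinery cannot be fed for an arbitrary amalgamand --- but that admission means what you have written is a research program, not a proof, and it should not be presented as an argument establishing the statement.

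Beyond that framing issue, there is a concrete structural gap in the program itself. The recursions of Theorems~\ref{thm:chain-V} and~\ref{thm:chain-E} apply only to graphs assembled by amalgamation at $2$-valent roots, so your induction can only ever reach graphs with the corresponding cut structure (linear chains and their relatives). Every $3$-connected graph --- which is the generic case, and includes essentially all the graphs for which the genus distribution problem is hard --- is ``prime'' in your terminology, so your proposed reduction ``to the prime, non-amalgamable rooted graphs'' reduces the conjecture to a class that is essentially as large as the class you started with, and for which no decomposition-based induction is available at all. The paper's own theorems (Corollaries~\ref{cor:V-chain} and~\ref{cor:E-chain}) are deliberately stated as \emph{conditional} results for chains precisely because this is where the method stops; your base-case claim that small rooted graphs are ``finite checks'' also quietly assumes the ten double-root partials of every such graph satisfy conditions~\eqref{cond:vertex:ls} and~\eqref{cond:edge:ls}, which no known topological mechanism guarantees, and the ``no internal zeros'' part of the conjecture has, as you note, no analogue of the interpolation theorem to lean on. So the correct assessment is: your outline reproduces the paper's own machinery and its own reasons for optimism, but it neither closes nor materially narrows the gap that makes this a conjecture.
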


\bigskip

\end{document}